\long\def\symbolfootnote[#1]#2{\begingroup\def\thefootnote{\fnsymbol{footnote}}\footnote[#1]{#2}\endgroup}
\newtheorem{theorem}{Theorem}[section]
\newtheorem{corollary}[theorem]{Corollary}
\newtheorem{lemma}[theorem]{Lemma}
\theoremstyle{remark}
\newtheorem{remark}[theorem]{Remark}
\newtheorem{notation}[theorem]{Notation}
\theoremstyle{definition}
\newtheorem{definition}[theorem]{Definition}
\newtheorem{proposition}[theorem]{Proposition}
\newtheorem{assumption}[theorem]{Assumption}
\numberwithin{equation}{section}
\begin{document}
\author{Bo Liu, Mengqing Zhan}
\title[]{Eta Form and Spectral Sequence for the Composition of Fibrations}
\date{}
\maketitle

\begin{abstract} 
In this paper, inspired by the spectral sequences constructed by signature operators with respect to the composition of fibrations, we define the "spectral sequences" for fiberwise Dirac operators and prove the equivariant family version of the adiabatic limit formula of eta invariants using the heat kernel method and the analytic localization techniques established by Bismut-Lebeau. In our formula, the
remainder terms are constructed by "spectral sequences" and completely extend those of Dai and Bunke-Ma.
\\

\end{abstract}

\maketitle

{\bf Keywords:} Equivariant eta form; index theory and fixed point theory; Chern-Simons form; adiabatic limit; spectral sequence.

{\bf 2020 Mathematics Subject Classification: } 58J20, 19K56, 58J28, 58J35.


\section{Introduction}\label{secIntroduction}

The Bismut-Cheeger eta form serves as the family extension of the eta invariant in index theory, which originally comes from the adiabatic limit of eta invariants. This limit is initiated by E. Witten \cite{Witten1985} for physical consideration and well studied by Bismut-Cheeger \cite{bismut1989eta} and Dai \cite{dai1991adiabatic}. In the general case of the 
adiabatic limit for Dirac operators in \cite{dai1991adiabatic}, a global
spectral term arises from the (asymptotically) very 
small eigenvalues. If we consider the signature 
operators, this spectral term can be constructed by 
Leray spectral sequences.

In \cite{bunke2004index}, in order to discuss the 
secondary index theory for flat bundles with duality, Bunke and Ma generalize the signature operators to the flat case and the adiabatic limit formula to the family case. In this case, the spectral terms are generalized to the finite dimensional eta forms constructed by spectral sequences.

In \cite{liu2017functoriality,liu2021equivariant,liu2021bismut}, for Dirac operators,
 the first author generalizes the adiabatic limit 
 formula to the equivariant family case for a fiberwise
 Lie group action. In \cite{liu2021bismut}, the spectral 
 terms are explained as equivariant Dai-Zhang higher spectral flows \cite{dai1998higher}. But those higher spectral 
 flow terms cannot degenerate to the terms in \cite{bunke2004index} and \cite{dai1991adiabatic} directly when restricted on the cases there.

In this paper, we make use of the descriptions in \cite{berthomieu1994quillen} to define a series of vector bundles over the base manifold which can be taken as the analogy of spectral sequences. Then the generalization of the remainder terms in \cite{bunke2004index} and \cite{dai1991adiabatic} 
are finite dimensional eta forms associated with these vector bundles. Moreover, these terms can also be considered as the refinement of the remainder terms in \cite{liu2021bismut}.

Now we explain our result in some details.

Let $\pi_X:W\to V$ be a submersion of two closed manifolds
with oriented closed fiber $X$.
Let $TX:=\ker (\pi_{{X},*}:TW\to TV)$ be the relative tangent bundle
over $W$.
Let $T^HW$ be a horizontal subbundle of $TW$ such that $TW=T^HW\oplus TX$.
Let $g^{TX}$ be a metric on $TX$. 
Let $\underline{\mathcal{E}_X}=(\mathcal{E}_X, h^{\mathcal{E}_X}, \nabla^{\mathcal{E}_X})$ be 
a $\mathbb{Z}_2$-graded self-adjoint $\mathrm{Cl}(TX)$-Clifford 
module with Clifford connection (see (\ref{bl0959}) and (\ref{e01024})). Let $D_X^{\mathcal{E}_X}$ be the fiberwise
Dirac operators associated with $(g^{TX}, \nabla^{\mathcal{E}_X})$
(see (\ref{eq:2.11})).

Assume that $\ker D_X^{\mathcal{E}_X}$ forms a vector bundle
over $V$. Under this assumption, the Bismut-Cheeger eta form
$\tilde{\eta}
(\underline{\pi_X},\underline{\mathcal{E}_X})\in \Omega^*(V)$ (non-equivariant version of Definition \ref{defnetaform})
is well-defined.

Let $g^{TV}$ be a Riemannian metric on $TV$.
Let $\nabla^{TV}$ be the Levi-Civita connection.
Let $\underline{\mathcal{E}_V}=(\mathcal{E}_V, h^{\mathcal{E}_V},
\nabla^{\mathcal{E}_V})$ be a $\mathbb{Z}_2$-graded self-adjoint Clifford module over
$V$ with Clifford connection.
For $T>0$, let $g_T^{TW}:=\pi_X^*g^{TV}\oplus T^{-2}g^{TX}$, which is a Riemannian metric on $TW$.
Let $\mathcal{E}=\pi_X^*\mathcal{E}_V\widehat{\otimes}\mathcal{E}_X$.
Let $\nabla^{\mathcal{E},T}$ be the connection on $\mathcal{E}$
defined in (\ref{eq:3.26}).
Then $\underline{\mathcal{E}}=(\mathcal{E}, \pi_X^*h^{\mathcal{E}_V}\otimes h^{\mathcal{E}_X},\nabla^{\mathcal{E},T})  $
is a $\mathbb{Z}_2$-graded self-adjoint Clifford module over $W$ with Clifford connection associated with $g_T^{TW}$.
Let $D_{W,T}^{\mathcal{E}}$ and $D_V^{\mathcal{E}_V\otimes \ker D_X^{\mathcal{E}_X}}$ be the Dirac operator associated with $(g_T^{TW}, \nabla^{\mathcal{E},T})$ and $(g^{TV},\nabla^{\mathcal{E}_V}\otimes 1 + 1\otimes \nabla^{\ker D_X^{\mathcal{E}_X}})$ (see (\ref{eq:2.16}) for the definition of $\nabla^{\ker D_X^{\mathcal{E}_X}}$). 
Let $\eta(D_{W,T}^{\mathcal{E}})$ and $\eta(D_V^{\mathcal{E}_V\otimes \ker D_X^{\mathcal{E}_X}})$ be the corresponding Atiyah-Patodi-Singer ets invariants in \cite{atiyah1973spectral}. The famous adiabatic limit formula is stated as follows.

\begin{theorem}\label{thm:0.03} \cite{bismut1989eta,dai1991adiabatic}
	If $\dim W$ is odd,
	\begin{align}\label{eq:0.05}
	\lim_{T\to +\infty}\eta\left(D_{W,T}^{\mathcal{E}} \right)
	=
	2\int_V \widehat{\mathrm{A}}(TV, \nabla^{TV})\tilde{\eta}
	(\underline{\pi_X},\underline{\mathcal{E}_X})+\eta\big(D_V^{\mathcal{E}_V\otimes\ker D_X^{\mathcal{E}_X}} \big) +R,
	\end{align}
	where $R$ is an integer-valued remainder term and  $\widehat{\mathrm{A}}(\cdot)$ is the corresponding $\widehat{\mathrm{A}}$-form (see \cite[\S 1.5]{berline1992heat} for the definition).
	Moreover, 
	\begin{enumerate}
		\item \cite[(0.5)]{bismut1989eta} if $W$ and $V$ are spin, and if 
		$D_X^{\mathcal{E}_X}$ is invertible, then $R=0$;
		\item \cite[Theorem 0.1]{dai1991adiabatic} if $W$ and $V$ are spin, 
		$\ker D_X^{\mathcal{E}_X}$ forms a vector bundle over $V$,
		and if $\dim \ker D_{W,T}^{\mathcal{E}}$ is independent of
		$T$,
		then 
		\begin{align}
		R=\sum_{\lambda\in A_r/A_{r+1}, r\geq 2} \mathrm{sgn}(\lambda),\quad 
		A_r := \left\{ \lambda\in\mathrm{Sp}(D_{W,T}^{\mathcal{E}}):\lambda=\mathrm{O}\left(\frac{1}{T^{r-1}}\right) \right\},
		\end{align}
		where $\mathrm{Sp}(D_{W,T}^{\mathcal{E}})$ is the set of the spectrum of $D_{W,T}^{\mathcal{E}}$;
		\item \cite[Theorem 0.3]{dai1991adiabatic} if all Dirac operators are 
		signature operators, then $R$ is the sum of the signatures
		of the spectral sequences associated with the fiber bundle $\pi_X$.
	\end{enumerate}

\end{theorem}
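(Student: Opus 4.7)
The plan is to represent $\eta(D_{W,T}^{\mathcal{E}})$ by the heat-kernel integral $\frac{1}{\sqrt{\pi}}\int_0^\infty t^{-1/2}\mathrm{Tr}\bigl(D_{W,T}^{\mathcal{E}}\exp(-t(D_{W,T}^{\mathcal{E}})^2)\bigr)\,dt$ (with the usual $s$-regularization near $t=0$), and to analyze it as $T\to+\infty$ by means of Bismut's superconnection formalism for the submersion $\pi_X$. The rescaling $g_T^{TW}=\pi_X^*g^{TV}\oplus T^{-2}g^{TX}$ is equivalent (after conjugation by a canonical unitary) to keeping the geometry fixed while rescaling the fiberwise Dirac operator, so $D_{W,T}^{\mathcal{E}}$ can be written in Bismut-superconnection form adapted to $\pi_X$. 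The strategy is then to split the $t$-integral at a window tied to the relative weight of horizontal and vertical directions and treat the two regimes by different methods.

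For the range where $t$ is small relative to the horizontal scale, I would invoke Bismut's local family index theorem to identify the limiting supertrace with the $\widehat{\mathrm{A}}$-form on $V$ paired against the fiberwise Bismut-Cheeger eta form, producing the contribution $2\int_V\widehat{\mathrm{A}}(TV,\nabla^{TV})\tilde\eta(\underline{\pi_X},\underline{\mathcal{E}_X})$. For the complementary long-time range, a finite-dimensional reduction applies: modulo exponentially small errors, only modes near the kernel of $D_X^{\mathcal{E}_X}$ survive, and the resulting operator on $V$ is a perturbation of the twisted Dirac operator on the base, which yields $\eta(D_V^{\mathcal{E}_V\otimes\ker D_X^{\mathcal{E}_X}})$. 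The technical backbone here is the Bismut-Lebeau analytic localization together with sharp parametrix estimates for the full operator $D_{W,T}^{\mathcal{E}}$.

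The remainder $R$ records the mismatch between the two regimes, and its structure is dictated entirely by the asymptotically small eigenvalues of $D_{W,T}^{\mathcal{E}}$. In case (1), invertibility of $D_X^{\mathcal{E}_X}$ rules out any such small eigenvalue by a uniform spectral gap, so $R=0$. In case (2), once $\dim\ker D_{W,T}^{\mathcal{E}}$ is stable in $T$, every eigenvalue that decays like $O(T^{-(r-1)})$ but not faster persists across the transition scale, and a spectral-flow / bifurcation argument shows that each such eigenvalue contributes precisely its sign to $R$, giving the filtration $A_r/A_{r+1}$. In case (3), one further identifies this filtration with the pages of the Leray spectral sequence of $\pi_X$ (in the spirit of Mazzeo-Melrose and Forman), after which the Atiyah-Singer signature identification at each page turns the signed count into the sum of signatures of the $E_r$-pages.

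The hard part is the uniform analytic control at the crossover scale: one needs resolvent and heat-kernel estimates that separate the genuinely small eigenvalues from the bulk spectrum of $D_{W,T}^{\mathcal{E}}$ uniformly in $T$, and a matching geometric description of those small eigenvalues. This is the step that uses the full strength of Bismut-Lebeau-style localization, and it is precisely what distinguishes the three sub-cases: (1) needs only a uniform gap, (2) needs a uniform grading of the small eigenvalues by their decay rate, and (3) needs the further identification of each graded piece with a page of the spectral sequence together with its signature pairing.
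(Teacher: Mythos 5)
Theorem \ref{thm:0.03} is not proved in this paper --- it is quoted from Bismut--Cheeger and Dai, and the paper's contribution is the family/equivariant generalization proved in Sections 4--5 via a two-parameter contour argument. Measured against those original proofs and against the paper's own strategy for the generalization, your sketch gets the global architecture right: heat-kernel representation, conjugation into Bismut-superconnection form adapted to $\pi_X$, a local index computation producing $\widehat{\mathrm{A}}(TV,\nabla^{TV})\tilde\eta(\underline{\pi_X},\underline{\mathcal{E}_X})$, reduction of the complementary regime to $\eta\big(D_V^{\mathcal{E}_V\otimes\ker D_X^{\mathcal{E}_X}}\big)$, and a remainder governed by asymptotically small eigenvalues. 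But "split the $t$-integral at a window tied to the relative weight of horizontal and vertical directions" elides the actual mechanism: the proofs work in the two-parameter $(T,u)$ plane and extract the contributions as boundary integrals of a transgression form over a rectangle via Stokes, taking limits edge by edge in a prescribed order --- this is exactly the contour $\Gamma$ in Figure \ref{contour1} that the paper deploys for its generalization. A one-parameter cutoff depending on $T$ can be made to work, but only after proving the same uniform crossover estimates, so the phrasing hides rather than addresses the technical core.

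The more substantive imprecision is in your account of cases (2) and (3). You describe the signed count as arising from a "spectral-flow / bifurcation argument" and, for case (3), route the identification through Mazzeo--Melrose and Forman. Neither is how Dai's argument (nor this paper's Proposition \ref{prop:5.04}) actually works. The small eigenvalues of $D_{W,T}^{\mathcal{E}}$ living on scale $T^{-(r-1)}$ are shown to converge, after rescaling, to the nonzero spectrum of a finite-dimensional operator $D_r$ acting on the $r$-th page of the spectral sequence; the relevant tool is a resolvent comparison of the type in Theorem \ref{thmATZtoAr}, not an eigenvalue-crossing count along a path. Each such eigenvalue then contributes $\mathrm{sgn}(\lambda)$ simply because when $S$ is a point the finite-dimensional eta form of $D_r$ equals $\sum_{\lambda\in\mathrm{Sp}(D_r)}\mathrm{sgn}(\lambda)$, and the sum over pages gives $R$. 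Executed as you wrote it, a spectral-flow argument would be looking for eigenvalue crossings that the hypotheses (in particular, $\dim\ker D_{W,T}^{\mathcal{E}}$ constant) are precisely designed to exclude; the object you actually need is the limiting spectrum on each decay scale and the finite-dimensional eta invariant attached to it.
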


Note that the case (1) in Theorem \ref{thm:0.03} is a special case of
case (2). And if $\dim V$ is even, then the term $\eta(D_V^{\mathcal{E}_V\otimes\ker D_X^{\mathcal{E}_X}} )$ in (\ref{eq:0.05}) vanishes. 

\begin{remark}
\begin{enumerate}
\item In \cite{bismut1989eta,dai1991adiabatic},
(\ref{eq:0.05}) was only proved for these three cases 
listed in Theorem \ref{thm:0.03}. But the formula 
(\ref{eq:0.05}) for Clifford modules is the natural extension of 
these results.
\item In \cite{bismut1989eta,dai1991adiabatic}, the authors
use the rescaling $g_t^{TW}=g^{TX}+t^{-2}\pi_X^*g^{TV}$, $t\to 0$.
Here we can consider $T=t^{-1}$. Then $g_T^{TW}=t^2g_t^{TW}$.
Note that when we multiply a constant on the metric, the 
eta invariant does not change. So (\ref{eq:0.05}) is the same
as the results in \cite{bismut1989eta,dai1991adiabatic}.
\item Let 
$\nabla_T^{TW}$ be the Levi-Civita connection associated with
$\pi_X^*g^{TV}\oplus T^{-2}g^{TX}$. Then by \cite[Proposition 4.5]{liu2017functoriality}
(cf. also \cite[(4.32)]{xiaonan1999formes}), $\displaystyle\lim_{T\to +\infty}\widetilde{\widehat{\mathrm{A}}}(TW,\nabla_T^{TW},\,^0\nabla^{TW})=0$, where $\,^0\nabla^{TW}=\pi_X^*\nabla^{TV}\oplus \nabla^{TX}$ and $\widetilde{\widehat{\mathrm{A}}}(\cdot)$ is the Chern-Simons form for the $\widehat{\mathrm{A}}$-form (cf. \cite[Definition B.5.3]{ma2007holomorphic}).  
The formula (\ref{eq:0.05}) can also be formulated as 
an equality:
	\begin{multline}\label{eq:1.03}
\eta\left(D_{W,T}^{\mathcal{E}} \right)
=
2\int_V \widehat{\mathrm{A}}(TV, \nabla^{TV})\tilde{\eta}
(\underline{\pi_X},\underline{\mathcal{E}_X})
-2\int_W\widetilde{\widehat{\mathrm{A}}}(TW,\nabla_T^{TW}, \,^0\nabla^{TW})
+\eta\big(D_V^{\mathcal{E}_V\otimes\ker D_X^{\mathcal{E}_X}} \big) +R.
\end{multline}
We usually take $T=1$.
\end{enumerate}
\end{remark}

Note that if $V$ is a point and $\dim X$ is odd, 
\begin{align}\label{eq:0.07}
\tilde{\eta}
(\underline{\pi_X},\underline{\mathcal{E}_X})=\frac{1}{2}\eta(D_X^{\mathcal{E}_X}).
\end{align}
Thus the Bismut-Cheeger eta form can be considered as the higher degree version of the eta invariant. If $V$ is a fibration
over a closed manifold $S$, then $W$ is also a fibration over $S$.
Then we could generalize the eta invariants in (\ref{eq:1.03})
to the Bismut-Cheeger eta forms. In fact, we could generalize them
directly to the equivariant eta forms for compact Lie group action.

Let $G$ be a compact Lie group. Let $W$, $V$, $S$ be closed $G$-manifolds. Let $\pi_X: W\rightarrow V$, $\pi_Y: V\rightarrow S$ be equivariant submersions with closed oriented
fibers $X$, $Y$. Then $\pi_Z=\pi_Y\circ \pi_X: W\rightarrow S$ is an equivariant submersion with closed oriented fiber $Z$.
Assume that $G$ acts on $S$ trivially.
We have the diagram of fibrations:
\begin{equation}\label{diagram}
\xymatrix{
	X\ar[r] & Z\ar[r]\ar[d]^{\pi_X} & W\ar[rd]^{\pi_Z}\ar[d]^{\pi_X} & \\ & Y\ar[r] & V\ar[r]^{\pi_Y} & S.
}
\end{equation}

Let $\underline{\pi_X}=(\pi_X, T_X^HW, g^{TX})$, $\underline{\pi_Y}=(\pi_Y, T_Y^HV, g^{TY})$
and $\underline{\pi_Z}=(\pi_Z, T_Z^HW, g^{TZ})$ be equivariant geometric data with respect to
$\pi_X$, $\pi_Y$ and $\pi_Z$ as in (\ref{eq:2.10}).
Assume that $T_Z^HW\subset T_X^HW$ and $g^{TZ}=\pi_X^*g^{TY}\oplus g^{TX}$.
Let $\nabla^{TX}$, $\nabla^{TY}$ and $\nabla^{TZ}$ be the
corresponding connections on $TX$, $TY$ and $TZ$ as in 
(\ref{connectiononrelativebundle}). Set $\,^0\nabla^{TZ}:=\pi_X^*\nabla^{TY}
\oplus \nabla^{TX}$.

Let $\underline{\mathcal{E}_X}=(\mathcal{E}_X, h^{\mathcal{E}_X},
\nabla^{\mathcal{E}_X})$ (resp.  $\underline{\mathcal{E}_Y}=(\mathcal{E}_Y, h^{\mathcal{E}_Y},
\nabla^{\mathcal{E}_Y})$) be a $\mathbb{Z}_2$-graded $G$-equivariant self-adjoint $\mathrm{Cl}(TX)$-module over $W$ (resp. $\mathrm{Cl}(TY)$-module
over $V$) with a $G$-invariant Clifford connection as in (\ref{eq:2.10}).
Let $\mathcal{E}=\pi_X^*\mathcal{E}_Y \widehat{\otimes} \mathcal{E}_X$.
Then $\underline{\mathcal{E}}=(\mathcal{E}, \pi_X^* h^{\mathcal{E}_Y}\otimes h^{\mathcal{E}_X}, \nabla^{\mathcal{E}})$ is a $\mathbb{Z}_2$-graded $G$-equivariant self-adjoint $\mathrm{Cl}(TZ)$-module over $W$ with a $G$-invariant Clifford connection. Here $\nabla^{\mathcal{E}}$
is defined in (\ref{eq:3.05}).

Let $D_X^{\mathcal{E}_X}$ and $D_Z^{\mathcal{E}}$ be fiberwise Dirac operators
associated with $(g^{TX}, \nabla^{\mathcal{E}_X})$ and $(g^{TZ}, \nabla^{\mathcal{E}})$ respectively. Assume that $\ker D_X^{\mathcal{E}_X}$ (resp. $\ker D_Z^{\mathcal{E}}$) forms 
a vector bundle over $V$ (resp. $S$). 
Let $\nabla^{\ker {D_X^{\mathcal{E}_X}}}$ be the 
induced $G$-invariant connection on the vector bundle $\ker D_X^{\mathcal{E}_X}$
as in (\ref{eq:2.16}).
Let $D_Y^{\mathcal{E}_Y\otimes\ker D_X^{\mathcal{E}_X}}$ be the fiberwise Dirac operator twisted with 
the vector bundle $\ker D_X^{\mathcal{E}_X}$ over $V$ associated with
$(g^{TY}, \nabla^{\ker {D_X^{\mathcal{E}_X}}})$. Assume that $\ker D_Y^{\mathcal{E}_Y\otimes\ker D_X^{\mathcal{E}_X}}$ forms 
a vector bundle over $S$.

\begin{theorem}\cite[Theorem 1.6]{liu2021bismut}\label{thm:0.06}
For $g\in G$, modulo exact forms on $S$, we have
\begin{equation}\label{main}
\begin{aligned}
\widetilde{\eta}_g & (\underline{\pi_Z}, \underline{\mathcal{E}})  = \widetilde{\eta}_g(\underline{\pi_Y},\underline{\mathcal{E}_Y\otimes\ker D_X^{\mathcal{E}_X}}) + \int_{Y^g} \widehat{\mathrm{A}}_g(TY,\nabla^{TY})\mathrm{ch}_g(\mathcal{E}_Y/\mathcal{S},\nabla^{\mathcal{E}_Y})\widetilde{\eta}_g(\underline{\pi_X^g},\underline{\mathcal{E}_X}) \\
& - \int_{Z^g}\widetilde{\widehat{\mathrm{A}}}_g(TZ,\nabla^{TZ},{^0\nabla^{TZ}})\mathrm{ch}_g(\mathcal{E}/\mathcal{S},\nabla^{\mathcal{E}})+\widetilde{R},
\end{aligned}
\end{equation}
where $\underline{\pi_X^g}$ is defined in (\ref{eq:3.14}) and 
$\widetilde{R}\in \mathrm{ch}_g(K^0_G(S))$, the image of the equivariant Chern character $\mathrm{ch}_g$ on the equivariant topological $K$-group of $S$.
Here, $Y^g$ and $Z^g$ are the fixed point sets of $g\in G$ on $Y$ and $Z$ respectively, which are assumed to be oriented,  $\widehat{\mathrm{A}}_g(\cdot)$ and $\mathrm{ch}_g(\mathcal{E}_Y/\mathcal{S},\nabla^{\mathcal{E}_Y})$ are the equivariant $\widehat{\mathrm{A}}$-form
and the equivariant relative Chern character form (see, e.g., \cite[(1.32), (1.33)]{liuma2022}
 for the definitions) and $\widetilde{\widehat{\mathrm{A}}}_g(\cdot)$ is the equivariant
 Chen-Simons form associated with the equivariant $\widehat{\mathrm{A}}$-form which is the natural equivariant 
 extension of \cite[Definition B.5.3]{ma2007holomorphic}.
\end{theorem}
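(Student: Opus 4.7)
The plan is to prove Theorem \ref{thm:0.06} by studying how the equivariant eta form $\widetilde{\eta}_g(\underline{\pi_Z},\underline{\mathcal{E}})$ varies along the one-parameter family of rescaled fiberwise metrics $g_T^{TZ}:=\pi_X^*g^{TY}\oplus T^{-2}g^{TX}$ on $TZ$, interpolating between the given geometry at $T=1$ and the adiabatic regime $T\to\infty$ in which the two factors of the composition effectively decouple. First I would set up everything in terms of the Bismut superconnection attached to $\underline{\pi_Z}$ with the rescaled data and express $\widetilde{\eta}_g$ as the standard regularized integral in the heat-kernel time $u$; the task then becomes to compare $\widetilde{\eta}_g$ at $T=1$ with its limit as $T\to\infty$.

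Second, the $T$-variation of $\widetilde{\eta}_g$ is given by a standard transgression formula. Integrating this derivative from $T=1$ to $T=\infty$ and applying the equivariant local index theorem fiberwise on $Z$ produces the Chern-Simons term $-\int_{Z^g}\widetilde{\widehat{\mathrm{A}}}_g(TZ,\nabla^{TZ},{}^0\nabla^{TZ})\,\mathrm{ch}_g(\mathcal{E}/\mathcal{S},\nabla^{\mathcal{E}})$ together with boundary contributions at $T=\infty$; here the vanishing statement $\lim_{T\to\infty}\widetilde{\widehat{\mathrm{A}}}(TW,\nabla_T^{TW},{}^0\nabla^{TW})=0$ recalled in the introduction ensures that the Chern-Simons part stabilizes in the right way, and Berline-Vergne localization replaces integration over $Z$ by one over the fixed point set $Z^g$.

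Third, for the adiabatic limit $T\to\infty$ itself, the $X$-direction operator is rescaled by $T$, so up to exponentially small errors the Bismut superconnection of $\pi_Z$ splits into a superconnection on $\pi_Y$ twisted by the bundle $\ker D_X^{\mathcal{E}_X}$, plus a "small-fiber" contribution carrying the fiberwise Bismut-Cheeger eta form of $\pi_X$. The first produces $\widetilde{\eta}_g(\underline{\pi_Y},\underline{\mathcal{E}_Y\otimes\ker D_X^{\mathcal{E}_X}})$; the second, after integrating the fiberwise eta form against the equivariant characteristic form of $\pi_Y$ and then localizing at $Y^g$ via the equivariant family index theorem, yields the term $\int_{Y^g}\widehat{\mathrm{A}}_g(TY,\nabla^{TY})\,\mathrm{ch}_g(\mathcal{E}_Y/\mathcal{S},\nabla^{\mathcal{E}_Y})\,\widetilde{\eta}_g(\underline{\pi_X^g},\underline{\mathcal{E}_X})$. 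Implementing this split rigorously requires Bismut-Lebeau analytic localization together with uniform-in-$T$ heat-kernel estimates, because the fiberwise spectrum of $D_X^{\mathcal{E}_X}$ varies over $V$ and the very small eigenvalues of $D_{Z,T}^{\mathcal{E}}$ must be separated from the large ones.

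The remaining defect $\widetilde{R}$ is exactly what those very small eigenvalues contribute in the limit: eigenvalues of $D_{Z,T}^{\mathcal{E}}$ that go to zero like $O(T^{-r})$ as $T\to\infty$ organize into a filtration whose successive quotients carry finite-dimensional $G$-equivariant bundles over $S$, and whose collective contribution to the eta form assembles into a closed form in $\mathrm{ch}_g(K^0_G(S))$. The main obstacle is precisely this uniform spectral analysis: one must keep track of how $\ker D_Z^{\mathcal{E}}$ is built up hierarchically from $\ker D_X^{\mathcal{E}_X}$ and then $\ker D_Y^{\mathcal{E}_Y\otimes\ker D_X^{\mathcal{E}_X}}$, and show that the heat-kernel contributions of the intermediate, $T$-dependent eigenspaces converge, modulo exact forms on $S$, to the Chern character of a virtual equivariant bundle. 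This is where the Berthomieu-Bismut description of the $T$-small spectrum and the Bismut-Lebeau localization machinery carry the analytic weight of the theorem.
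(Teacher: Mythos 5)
This theorem is quoted by the paper from \cite{liu2021bismut}, and the paper's refinement, Theorem \ref{thmmaintheorem}, is proved in Section 4 by the $(T,u)$-contour (Stokes) argument built on the fundamental form $\gamma$ of the doubly-rescaled superconnection $\widehat{B}$ together with the intermediate limit Theorems \ref{thmintermediate1}--\ref{thmintermediate2} that rest on Bismut--Lebeau localization and the Berthomieu--Bismut small-spectrum analysis. Your proposal is that same strategy: treating $T\mapsto\widetilde{\eta}_g$ via transgression in the $(T,u)$-plane, extracting the Chern--Simons form from the small-$u$ (local index) regime, obtaining the $\pi_Y$ eta form and the fiberwise $\pi_X$ eta form (integrated against $\widehat{\mathrm{A}}_g\,\mathrm{ch}_g$ on $Y^g$) from the $T\to\infty$ splitting, and attributing $\widetilde{R}$ to the $O(T^{-r})$ eigenvalue filtration converging to a closed form in $\mathrm{ch}_g(K^0_G(S))$ — so it is essentially the paper's route.
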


\begin{remark}\label{remark:1.04}
\begin{enumerate}
\item In \cite{liu2017functoriality}, the first author proves that
if there is no higher spectral flow for any deformation there, then
$\widetilde{R}=0$.
\item In \cite[Theorems 5.9 and 5.10]{bunke2004index}, if all Clifford modules are exterior algebra bundles twisted with flat bundles and the Dirac operators are generalized signature operators, 
without the group action, Bunke and Ma show that the remainder term $\widetilde{R}$ is the sum of finite dimensional eta forms
constructed by spectral sequences.
If $S$ is a point and all flat bundles are trivial line bundles,
then $\widetilde{R}$ in this case degenerates to $R/2$ in 
Theorem \ref{thm:0.03} (3).
\item The proof of such formula is highly related to the analytical localization technique developed in \cite{bismut1991complex}.
\end{enumerate}
\end{remark}

The purpose of this paper is to establish the following result, which we state in
Theorem \ref{thmmaintheorem}.

\begin{theorem}
Under the setting of Theorem \ref{thm:0.06} and Assumption \ref{assumptions}, for $g\in G$, modulo exact forms on $S$,
(\ref{main}) holds and 
\begin{align}\label{eq:1.07}
\widetilde{R}=\sum_{r=2}^\infty \widetilde{\eta}_g(\mathscr{E}_r,\mathscr{E}_{r+1},\nabla^r,\nabla^{r+1}) + \widetilde{\mathrm{ch}}_g(\ker {D_Z^{\mathcal{E}}},\nabla^\infty,\nabla^{\ker {D_Z^{\mathcal{E}}}}).
\end{align}
The definitions of the notations above follow from (\ref{eq:3.22})  and (\ref{eq:3.34}).
\end{theorem}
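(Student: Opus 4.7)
The plan is to refine the analysis that produces $\widetilde{R}$ in Theorem \ref{thm:0.06} by tracking the asymptotically small spectrum of $D_{Z,T}^{\mathcal{E}}$ scale by scale as $T \to \infty$. Following the description of \cite{berthomieu1994quillen}, I would construct the "pages" $\mathscr{E}_r$ so that $\mathscr{E}_r$ carries the (generalized) eigensections whose eigenvalue is $O(T^{-(r-1)})$, with a natural differential induced by the higher-order corrections in the formal expansion of $D_{Z,T}^{\mathcal{E}}$ in powers of $T^{-1}$; its cohomology is $\mathscr{E}_{r+1}$, and the sequence stabilizes at $\mathscr{E}_\infty \cong \ker D_Z^{\mathcal{E}}$. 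Assumption \ref{assumptions} enters precisely to guarantee that each such eigenspace forms a $\mathbb{Z}_2$-graded $G$-equivariant vector bundle over $S$ carrying a compatible connection $\nabla^r$.

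To identify $\widetilde{R}$, I would localize the spectrum of $D_{Z,T}^{\mathcal{E}}$ into "large" and "small" parts via the Bismut-Lebeau technique already invoked in Theorem \ref{thm:0.06}. The large part contributes the local index-theoretic terms appearing explicitly in (\ref{main}), while the small part is responsible for $\widetilde{R}$. On the small part, the equivariant eta form can be written as a transgression, and I would iteratively decompose the heat operator restricted to eigenvalues of size $O(T^{-(r-1)})$ using a parametrix adapted to the $r$-th scale. At each scale, the surviving superconnection collapses to a finite-dimensional one on $\mathscr{E}_r$ whose leading part is the Berthomieu-Bismut differential $d_r$, and the Duhamel expansion identifies the contribution of that scale with the finite-dimensional eta form $\widetilde{\eta}_g(\mathscr{E}_r,\mathscr{E}_{r+1},\nabla^r,\nabla^{r+1})$, which is the transgression of equivariant Chern characters between consecutive pages. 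Because the connection $\nabla^\infty$ inherited from the filtration process need not coincide with the $L^2$-projection connection $\nabla^{\ker D_Z^{\mathcal{E}}}$, the residual discrepancy contributes exactly $\widetilde{\mathrm{ch}}_g(\ker D_Z^{\mathcal{E}},\nabla^\infty,\nabla^{\ker D_Z^{\mathcal{E}}})$ modulo exact forms, and telescoping the pagewise transgressions yields the claimed formula (\ref{eq:1.07}).

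The main obstacle I anticipate is the uniform equivariant spectral gap control needed to justify the iterated scale decomposition: one must show that the gap separating $O(T^{-(r-1)})$ eigenvalues from $O(T^{-r})$ eigenvalues persists $G$-equivariantly with uniform constants across $S$, for every $r \geq 2$, which is precisely where Assumption \ref{assumptions} should play the essential role, and one must verify the compatibility of the successive projections with the $G$-action so that the equivariant $\widetilde{\eta}_g$ is well-defined at each page. A secondary technical difficulty is the bookkeeping of equivariant Chern-Simons forms: one has to verify that the higher-order remainders in each Duhamel expansion are exact at the form level on $S$, so that the identification $\widetilde{R} = \sum_{r \geq 2} \widetilde{\eta}_g(\mathscr{E}_r,\mathscr{E}_{r+1},\nabla^r,\nabla^{r+1}) + \widetilde{\mathrm{ch}}_g(\ker D_Z^{\mathcal{E}},\nabla^\infty,\nabla^{\ker D_Z^{\mathcal{E}}})$ holds modulo exact forms on $S$ as stated, rather than only at the level of de Rham cohomology.
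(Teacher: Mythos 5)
Your proposal is correct and follows essentially the same route as the paper: the pages $\mathscr{E}_r$ with operator $D_r$ are built exactly from the formal $T^{-1}$-expansion of $D_{Z,T}^{\mathcal{E}}$ (Definition \ref{defnEr}, (\ref{eq:3.18})), the scale-by-scale spectral filtration is implemented through the resolvent comparison $\|(\lambda - T^{r-1}D_{Z,T}^{\mathcal{E}})^{-1} - p_r(\lambda-D_r)^{-1}p_r\| = \mathrm{O}(T^{-1})$ of Theorem \ref{thmATZtoAr}, and the $\nabla^\infty$-versus-$\nabla^{\ker D_Z^{\mathcal{E}}}$ discrepancy does produce the $\widetilde{\mathrm{ch}}_g$ correction from the $u\to\infty$ boundary of the contour. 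The only stylistic difference is that the paper realizes your "Duhamel at each scale" step by splitting the heat supertrace into contour-integral resolvent pieces $F_{r,u,T}$, $G_{r,u,T}$ over $\Delta_0,\delta_0$, controlling their $T\to\infty$ limits via Cauchy estimates on the holomorphic $u$-dependence, and obtaining the "modulo exact forms" statement from Stokes' theorem on the two-parameter fundamental form $\gamma$ over $\widehat{S}=\mathbb{R}_{+,T}\times\mathbb{R}_{+,u}\times S$ rather than from Duhamel remainders.
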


Note that if $S$ is a point, the setting for the cases (1)-(3) in Theorem \ref{thm:0.03}
fulfills Assumption \ref{assumptions}. If $S$ is not a point,
without the group action,
the settings in Remark \ref{remark:1.04} also fulfills Assumption
\ref{assumptions}. In these cases, our theorem degenerates to previous results (see Proposition \ref{prop:5.04}).

\

\textbf{Notation}.
All manifolds in this paper are smooth and without boundary.
All fibrations in this paper are submersions with 
closed oriented fibers.
We denote by $d$ the exterior differential operator and 
$d^S$ when we like to insist the base manifold $S$.

We use the Einstein summation convention in this paper: when an index
variable appears twice in a single term and is not otherwise defined, it implies summation
of that term over all the values of the index.

We use the superconnection formalism of Quillen \cite{quillen1985superconnections}.
If $A$ is a $\mathbb{Z}_2$-graded algebra, and if $a,b\in A$, 
then we will note $[a,b]:=ab-(-1)^{\deg(a)\deg(b)}ba$ as the supercommutator 
of $a, b$.
If $E, E'$ are two $\mathbb{Z}_2$-graded spaces,
we will note $E\widehat{\otimes}E'$ as the $\mathbb{Z}_2$-graded 
tensor product as in \cite[\S 1.3]{berline1992heat}.
If one of $E, E'$ is ungraded, we understand 
it as $\mathbb{Z}_2$-graded by taking its odd part as zero. 

For the fiber bundle $\pi: W\rightarrow S$, 
we use the sign convention for the integration of  the differential forms along the oriented fibers $Z$ as follows:
for $\alpha\in \Omega^{\bullet}(S)$ and 
$\beta\in \Omega^{\bullet}(W)$, 
\begin{align}\label{e01136}
\int_Z(\pi^*\alpha)\wedge\beta=\alpha\wedge \int_Z\beta.
\end{align}

\section{Equivariant eta form}\label{s02}
In this section, we review the basic object of this paper --- eta forms. In Section 2.1, we describe the geometry of a fibration and introduce the Bismut superconnection to define the equivariant Bismut-Cheeger eta forms (cf. \cite{berline1992heat}). Then in Section 2.2, we introduce the finite version of eta forms for a vector bundle, which we call the equivariant finite dimensional eta forms. 

\subsection{Equivariant Bismut-Cheeger eta form}\label{s0201}

In this subsection, we recall the definition of the equivariant Bismut-Cheeger eta form. 

Given a submersion of closed manifolds $\pi:W\to S$ with closed oriented fiber $Z$, let $G$ be a compact Lie group which acts on $W$ with $\pi\circ g=\pi$, $\forall g\in G$.
In this case, the $G$-action on $S$ is trivial.
 We denote by $TZ:=\ker(\pi_*:TW\to TS)$ the relative tangent bundle and $T^HW$ a horizontal subbundle of $TW$ such that 
\begin{equation}\label{TWdecomposition}
  TW=T^HW\oplus TZ.
\end{equation}
Then $T^HW$ and $TZ$ are both vector bundles over $W$. We assume that the $G$-action preserves the orientation of $TZ$. We assume that $T^HW$ is also $G$-equivariant. 
Then the $G$-action preserves the splitting (\ref{TWdecomposition}).
For $U\in TS$, let $U^H\in T^HW$ be its horizontal lift in $T^HW$ such that $\pi_*U^H=U$. 
 Let $P^{TZ}:TW\to TZ$ be the projection with respect to  (\ref{TWdecomposition}).

Let $g^{TZ}$ and $g^{TS}$ be $G$-invariant metrics on $TZ$ and $TS$ respectively. Then
\begin{equation}
  g^{TW} := \pi^*g^{TS} \oplus g^{TZ}
\end{equation}
is a $G$-invariant metric on $TW$.

Let $\nabla^{TW}$ be the Levi-Civita connection associated with $(TW,g^{TW})$ and
\begin{equation}\label{connectiononrelativebundle}
  \nabla^{TZ} := P^{TZ}\nabla^{TW}P^{TZ},
\end{equation}
which is a $G$-invariant Euclidean connection on $TZ$ depending only on $(T^HW,g^{TZ})$ (cf. \cite[Theorem 1.9]{bismut1986atiyah}). Let $\nabla^{TS}$ be the Levi-Civita connection on $(TS,g^{TS})$. Let 
\begin{equation}\label{0nablaTW}
  {^0\nabla^{TW}} := \pi^*\nabla^{TS} \oplus \nabla^{TZ}
\end{equation}
be a connection on $TW$, which is also $G$-invariant. We define 
\begin{equation}\label{eq:2.05}
	\mathcal{S}:=\nabla^{TW} - {^0\nabla^{TW}}.
\end{equation}
Then $\mathcal{S}$ is a 1-form on $W$ with values in antisymmetric elements of $\mathrm{End}(TW)$.
Let $\mathcal{T}$ be the torsion of $^0\nabla^{TW}$. 
Then by \cite[(1.30)]{bismut1986atiyah}, for $U,V\in TS$,
\begin{align}\label{eq:2.06}
\mathcal{T}(U^H, V^H)=-P^{TX}[U^H, V^H]\in TZ.
\end{align}

Let $\mathrm{Cl}(TZ)$ be the Clifford algebra bundle of $(TZ, g^{TZ})$, whose fiber at 
$x\in W$ is the Clifford algebra
$\mathrm{Cl}(T_xZ)$ of the Euclidean vector space $(T_xZ, g^{T_xZ})$. 
A $\mathbb{Z}_2$-graded self-adjoint $\mathrm{Cl}(TZ)$-module,
\begin{align}\label{bl0959} 
\mathcal{E}=\mathcal{E}_+\oplus\mathcal{E}_-,
\end{align}
is a $\mathbb{Z}_2$-graded complex vector bundle equipped with a Hermitian metric $h^{\mathcal{E}}$ 
preserving the splitting 
(\ref{bl0959}) and a 
fiberwise Clifford multiplication $c$ of $\mathrm{Cl}(TZ)$ such that the action $c$ 
restricted to $TZ$ is skew-adjoint on $(\mathcal{E},h^{\mathcal{E}})$ and anticommutes (resp. commutes) with the $\mathbb{Z}_2$-grading if the dimension of the fibres is even (resp. odd).
Locally, the Clifford module $\mathcal{E}$ could be written as
\begin{align}
\mathcal{E}=S(TZ)\widehat{\otimes}E,
\end{align}
where $S(TZ)$ is the spinor and $E=E_+\oplus E_-$ is a $\mathbb{Z}_2$-graded
complex vector bundle. In this case, if $\dim Z$ is even,
$S(TZ)=S_+(TZ)\oplus S_-(TZ)$ and 
$$\mathcal{E}_+=\left(S_+(TZ)\otimes E_+ \right)\oplus
\left(S_-(TZ)\otimes E_- \right),\quad
\mathcal{E}_-=\left(S_+(TZ)\otimes E_- \right)\oplus
\left(S_-(TZ)\otimes E_+ \right); $$
if $\dim Z$ is odd, 
$$\mathcal{E}_+=S(TZ)\otimes E_+,\quad \mathcal{E}_-=S(TZ)\otimes E_-.$$
Let $\nabla^{\mathcal{E}}$ be a Clifford connection on $\mathcal{E}$ 
associated with $\nabla^{TZ}$, that is,
$\nabla^{\mathcal{E}}$  preserves $h^{\mathcal{E}}$ and the splitting (\ref{bl0959}) and 
for any $U\in TW$, $V\in \mathcal{C}^{\infty}(W,TZ)$,
\begin{align}\label{e01024}
\left[\nabla_U^{\mathcal{E}}, c(V)\right]=c\left(\nabla^{TZ}_UV\right).
\end{align}
We assume that the action of $G$  could be lifted on $\mathcal{E}$ such that it is 
compatible with the Clifford action and preserves the splitting 
(\ref{bl0959}).
We assume that $h^{\mathcal{E}}$ and $\nabla^{\mathcal{E}}$ are $G$-invariant.

\begin{notation}
  We denote by 
  \begin{equation}\label{eq:2.10}
    \underline{\pi} := (\pi,T^HW,g^{TZ}),\quad \underline{\mathcal{E}} := (\mathcal{E},h^{\mathcal{E}},\nabla^{\mathcal{E}})
  \end{equation}
  the corresponding geometric data of the fibration $\pi$ and the Clifford module $\mathcal{E}$ introduced above.
\end{notation}

On Clifford module $\mathcal{E}$, we define a family of Dirac operators over $S$:
\begin{equation}\label{eq:2.11}
  D_Z^{\mathcal{E}} := \sum_{i=1}^{\dim Z}c(e_i)\nabla^{\mathcal{E}}_{e_i},
\end{equation}
for $\{e_i\}_{i=1}^{\dim Z}$ a local orthonormal frame of $(TZ,g^{TZ})$.
This definition is independent of the choice of $\{e_i\}_{i=1}^{\dim Z}$.

Let $\mathscr{E}_b$ be the space of smooth sections of $\mathcal{E}$ over $Z_b$, $b\in S$, equipped with the $L^2$-inner product 
\begin{equation}\label{<>scrE}
  \langle \cdot,\cdot \rangle_{\mathscr{E}_b} := \int_{Z_b} h^{\mathcal{E}}(\cdot,\cdot) d  v_Z.
\end{equation}
As in \cite{bismut1986atiyah}, we take $(\mathscr{E},\langle\cdot,\cdot\rangle_{\mathscr{E}})$ as an infinite dimensional vector bundle over $S$. Let $\nabla^{\mathscr{E}}$\label{nablaEZ} be the $\langle\cdot,\cdot\rangle_{\mathscr{E}}$-preserving connection on $\mathscr{E}$ with respect to $\nabla^{\mathcal{E}}$ defined in \cite[(1.7)]{bismut1986analysis}. 

Let $\{f_p\}$ be a local frame of $TS$ and $\{f^p\}$ be its dual. By (\ref{eq:2.06}), we denote by
\begin{align}
c(\mathcal{T})=\frac{1}{2}c(\mathcal{T}(f^H_p,f^H_q))f^p\wedge f^q\wedge=-\frac{1}{2}c(P^{TX}[f^H_p,f^H_q])f^p\wedge f^q\wedge.
\end{align}  
Let $B$ be the Bismut superconnection defined by (cf. \cite[P.336]{berline1992heat})
\begin{equation}\label{rescaledBismutsuperconnection}
  B:= D_Z^{\mathcal{E}} + \nabla^{\mathscr{E}} - \frac{c(\mathcal{T})}{4},
\end{equation}
which only depends on geometric data $(T^HW, g^{TZ}, \nabla^{\mathcal{E}})$.
For $u>0$, we denote $\delta_u$
the operator on $\Lambda^i(T^*S)\widehat{\otimes}\mathscr{E}$ by multiplying differential
forms by $u^{i/2}$.
Then for $u>0$, we define the rescaled Bismut superconnection
\begin{align}
  B_u:=u\delta_{u}\circ B\circ\delta_{u}^{-1}= \sqrt{u} D_Z^{\mathcal{E}} + \nabla^{\mathscr{E}} - \frac{c(\mathcal{T})}{4\sqrt{u}}.
\end{align}

Under the conditions above, we see that $D_Z^{\mathcal{E}}$ commutes with the $G$-action. Then for any $b\in S$, $\ker D_{Z_b}^{\mathcal{E}}$ is a 
finite dimensional $G$-representation.
 We assume that $\{\ker D_{Z_b}^{\mathcal{E}}\}_{b\in S}$ forms a vector bundle over $S$. Then $\langle\cdot,\cdot\rangle_{\mathscr{E}}$ induces a $G$-invariant metric on $\ker D_Z^{\mathcal{E}}$. Let $P^{\ker D_Z^{\mathcal{E}}}: \mathscr{E}\to \ker D_Z^{\mathcal{E}}$ be the orthogonal projection with respect to $\langle\cdot,\cdot\rangle_{\mathscr{E}}$. We define 
\begin{equation}\label{eq:2.16}
  \nabla^{\ker D_Z^{\mathcal{E}}} := P^{\ker D_Z^{\mathcal{E}}}\circ\nabla^{\mathscr{E}}\circ P^{\ker D_Z^{\mathcal{E}}},
\end{equation}
which is a $G$-invariant Hermitian connection on $\ker D_Z^{\mathcal{E}}$.

For a trace class element $P\in\Lambda(T^*S)\widehat{\otimes}\mathrm{End}(\mathscr{E})$, we denote by $\mathrm{Tr}^{\mathrm{odd}/\mathrm{even}}[P]$ the part of $\mathrm{Tr}_s[P]$ which take values in odd or even forms. Set
\begin{equation}
  \widetilde{\mathrm{Tr}}[P] := \begin{cases}
    \mathrm{Tr}_s[P], & \text{ if } \dim Z \text{ is even; } \\
    \mathrm{Tr}^{\mathrm{odd}}[P], & \text{ if } \dim Z \text{ is odd.}
  \end{cases}
\end{equation}
Here $\mathrm{Tr}_s[P]$ denotes the supertrace of $P$ as in \cite[\S 1.3]{berline1992heat}.

For $\alpha\in\Omega^i(S)$, set 
\begin{equation}\label{psiS}
  \psi_S(\alpha) = \begin{cases}
    \left( \frac{1}{2\pi\sqrt{-1}} \right)^{\frac{i}{2}}\cdot\alpha, & i \text{ is even;} \\
    \frac{1}{\sqrt{\pi}}\left( \frac{1}{2\pi\sqrt{-1}} \right)^{\frac{i-1}{2}}\cdot\alpha, & i \text{ is odd,}
  \end{cases}
\end{equation}
and 
\begin{equation}\label{tildepsiS}
  \widetilde{\psi}_S(\alpha) = \begin{cases}
    \frac{1}{\sqrt{\pi}}\psi_S\alpha, & i \text{ is even;} \\
    \frac{1}{2\sqrt{-1}\sqrt{\pi}}\psi_S\alpha, & i \text{ is odd.}
  \end{cases}
\end{equation}

For $\beta\in\Omega^\bullet(B\times [0,1]_u)$, if we write $\beta= \beta_0 + d  u\wedge\beta_1$, with $\beta_0,\beta_1\in\Omega(T^*S)$, we set 
\begin{equation}
  [\beta]^{d  u} := \beta_1.
\end{equation}

For $g\in G$, let $W^g$ be the fixed point set of $g$-action on $W$. Then $\pi|_{W^g}: W^g\to S$ is a fiber bundle with fiber $Z^g$. We assume that $TZ^g$ is oriented.

\begin{definition}[{\cite[Definition 2.3]{liu2017functoriality}}]\label{defnetaform}
  For $g\in G$,  the \emph{equivariant Bismut-Cheeger eta form} $\widetilde{\eta}_g(\underline{\pi},\underline{\mathcal{E}})\in\Omega^\bullet(S)$ is defined by 
  \begin{equation}
    \begin{aligned}
      \widetilde{\eta}_g(\underline{\pi},\underline{\mathcal{E}}) & := - \int_0^{+\infty} \left\{\psi_S \widetilde{\mathrm{Tr}}\left[ g\exp\left( -\left( B_u + d u\wedge\frac{\partial}{\partial u}\right)^2 \right) \right] \right\}^{d u}  d u \\
      & = \begin{cases}
        \int_0^{+\infty} \widetilde{\psi}_S\mathrm{Tr}^{\mathrm{even}} \left[ g \frac{\partial B_u}{\partial u} \exp(-B^2_u)\right]d u\in \Omega^{\mathrm{even}}(B;\mathbb{C}), &\text{if} \dim Z\text{ is odd;} \\
        \int_0^{+\infty} \widetilde{\psi}_S\mathrm{Tr}_s \left[ g \frac{\partial B_u}{\partial u} \exp(-B^2_u)\right]d u\in \Omega^{\mathrm{odd}}(B;\mathbb{C}),&\text{if} \dim Z\text{ is even.}
      \end{cases}
    \end{aligned}
  \end{equation}
\end{definition}


\subsection{Finite dimensional eta form}
In this subsection, we introduce the definition of equivariant finite dimensional eta form.

Let $E\to S$ be a $G$-equivariant $\mathbb{Z}_2$-graded vector bundle with Hermitian metric $h^E$, which preserves the $\mathbb{Z}_2$-grading. We assume that the $G$-action on $S$ is trivial and $h^E$ is $G$-invariant. Let $\nabla^E$ be a $G$-invariant connection preserving $h^E$. Take $V\in \mathrm{End}(E)$ commuting with the $G$-action. 
We assume that $V$  either commutes or anticommutes with the $\mathbb{Z}_2$-grading.
Set
\begin{equation}
  E':= \ker V.
\end{equation}
We assume that $\dim\ker V$ is locally constant. Then $\ker V\to S$ is a $G$-equivariant $\mathbb{Z}_2$-graded vector bundle. We define the equivariant geometric data on $E'$ by the orthogonal projection $P^{\ker V}: E\to E'$ as 
\begin{equation}
  h^{E'} := P^{\ker V}\circ h^E\circ P^{\ker V},\quad \nabla^{E'} := P^{\ker V}\circ\nabla^E\circ P^{\ker V}.
\end{equation}
Then $\nabla^{E'}$ is a connection preserving $h^{E'}$.

Based on Quillen's work \cite{quillen1985superconnections}, we define the superconnection as follows. 
\begin{definition}\label{defnQuillensuperconnection}
  Let $\sigma$ be a quantity super-commutes with $\Omega^\bullet(S)$. We define a \emph{superconnection}
  $L:\Omega^\bullet(S,E) \to \Omega^\bullet(S,E)$ by:
  \begin{equation}\label{Quillensuperconnection}
    L:=
    \begin{cases}
    \nabla^E + V, &\text{if}\ V \text{ anticommutes with the $\mathbb{Z}_2$-grading;} \\
    \nabla^E + \sigma V,&\text{if}\ V \text{ commutes with the $\mathbb{Z}_2$-grading.}
    \end{cases}
  \end{equation}
\end{definition}

\begin{remark}
  In the sequel, in order to simpify the notations, when $V$ commutes with the $\mathbb{Z}_2$-grading, we also usually omit $\sigma$ and regard $V$ as a quantity commutes with differential forms of even degree and anti-commutes with differential forms of odd degree.
\end{remark}

For $u>0$, set
\begin{equation}\label{rescaledsuperconnection}
  L_u := \sqrt{u}\delta_u\circ L\circ \delta_u^{-1} = \sqrt{u}V + \nabla^E.
\end{equation}

\begin{definition}\label{defnfiniteetafrom}
  For $g\in G$, we define the \emph{equivariant finite dimensional eta form} by
  \begin{equation}\label{finiteetaform}
    \begin{aligned}
      \widetilde{\eta}_g(E,E',\nabla^E,\nabla^{E'}) & := -\int_0^{+\infty} \left\{ \psi_S\mathrm{Tr}_s \left[ g\exp\left( - \left( L_u + d u\wedge \frac{\partial}{\partial u} \right)^2\right) \right] \right\}^{d  u} d  u \\
      & = \int_0^{+\infty}\widetilde{\psi}_S\mathrm{Tr}_s\left[ g\frac{\partial L_u}{\partial u}\exp(-L_u^2)\right] d u.
    \end{aligned}
  \end{equation}
\end{definition}
The legitimacy of the definition follows from the equivariant
version of \cite[Theorem 9.7]{berline1992heat}. 
Moreover, by the equivariant version of \cite[(9.2)]{berline1992heat},
\begin{equation}
  d \widetilde{\eta}_g(E,E',\nabla^E,\nabla^{E'}) = \mathrm{ch}_g(E',\nabla^{E'}) - \mathrm{ch}_g(E,\nabla^E).
\end{equation}

\section{Functoriality of eta forms}
We will present our main result in this section. In Section 3.1, we investigate the geometry of a composition of fibrations, then define the Dirac operators. In Section 3.2, we construct a series of bundles for the composition of fibrations by means of Dirac operators in Section 3.1, which is an analogy of \cite[(6.9)]{berthomieu1994quillen}. We also define the equivariant eta forms in this part. In Section 3.3, we rescale the Bismut superconnection and related operators to use the method of adiabatic limit. In Section 3.4 we state our main result, which can be seen as the relation of the equivariant Bismut-Cheeger and finite dimensional eta forms associated with the composition of fibrations under some assumptions.

\subsection{Composition of fibrations}
{We revisit the geometric setting in Section \ref{secIntroduction} to make the definition clear.} Let $W$, $V$, $S$ be closed $G$-manifolds. Let $\pi_X: W\rightarrow V$, $\pi_Y: V\rightarrow S$ be equivariant submersions with closed oriented
fibers $X$, $Y$. Then $\pi_Z=\pi_Y\circ \pi_X: W\rightarrow S$ is an equivariant submersion with closed oriented fiber $Z$.
Assume that $G$ acts on $S$ trivially.

We denote by $TX$, $TY$, $TZ$ the corresponding relative tangent bundles for $\pi_X$, $\pi_Y$, $\pi_Z$, and $T^H_XW$, $T^H_YV$, $T^H_ZW$ the horizontal $G$-equivariant subbundles respectively. For $U\in TS$, $U'\in TV$, we shall denote by ${U'}^H_X\in T^H_XW$, $U^H_Y\in T^H_YV$, $U^H_Z\in T^H_ZW$ the corresponding horizontal lifts of $U'$, $U$, $U$ such that $\pi_{X,*}({U'}^H_X)=U'$, $\pi_{Y,*}(U^H_Y) = U$, $\pi_{Z,*}(U^H_Z) = U$. We assume that $T^H_ZW\subset T^H_XW$. Let $T^HZ:=T^H_XW\cap TZ$. We have a splitting $TZ=T^HZ\oplus TX$ such that $T^HZ\simeq \pi^*_XTY$.

Let $g^{TX}$, $g^{TY}$ be two $G$-invariant Euclidean metrics on relative tangent bundles $TX$, $TY$ respectively. We define $g^{TZ}:=\pi_X^*g^{TY}\oplus g^{TX}$ on $TZ$, which is also $G$-invariant. Let $\nabla^{TX}$, $\nabla^{TY}$, $\nabla^{TZ}$ be $G$-invariant connections defined in (\ref{connectiononrelativebundle}) on $TX$, $TY$, $TZ$ respectively. Let ${ ^0\nabla^{TZ}}$ be the connection
\begin{equation}\label{eq:3.02}
  {^0\nabla^{TZ}} := \pi_X^*\nabla^{TY} \oplus\nabla^{TX}.
\end{equation}

In this paper, we write $\{g_\alpha\}$, $\{e_i\}$, $\{f_p\}$ the local orthonormal frames of $(TS,g^{TS})$, $(TX,g^{TX})$, $(TY,g^{TY})$  correspondingly, and $\{g^\alpha\}$, $\{e^i\}$, $\{f^p\}$ the dual frames.

Let $\underline{\mathcal{E}_X}=(\mathcal{E}_X, h^{\mathcal{E}_X},
\nabla^{\mathcal{E}_X})$ (resp.  $\underline{\mathcal{E}_Y}=(\mathcal{E}_Y, h^{\mathcal{E}_Y},
\nabla^{\mathcal{E}_Y})$) be a $\mathbb{Z}_2$-graded $G$-equivariant self-adjoint $\mathrm{Cl}(TX)$-module over $W$ (resp. $\mathrm{Cl}(TY)$-module
over $V$) with a $G$-invariant Clifford connection.
Then $\mathcal{E}_X$ is a $G$-equivariant vector bundle over $W$
and $\mathcal{E}_Y$ is a $G$-equivariant vector bundle over $V$.
%
Set
\begin{equation}
  \mathcal{E} : = \pi_X^*\mathcal{E}_Y\otimes\mathcal{E}_X.
\end{equation}
Then $\mathcal{E}$
is a $\mathbb{Z}_2$-graded $G$-equivariant self-adjoint Clifford module of $\mathrm{Cl}(TZ)\simeq \pi_X^*\mathrm{Cl}(TY)\widehat{\otimes}\mathrm{Cl}(TX)$ with Hermitian metric $h^\mathcal{E}:= \pi_X^*h^{\mathcal{E}_Y}\otimes h^{\mathcal{E}_X}$. For $U\in TY$, the Clifford action $c(U)$
on $\mathcal{E}_Y$ are lifted on $\pi_X^*\mathcal{E}_Y$ as 
$c(U_X^H)$.
Set
\begin{equation}
  {^0\nabla^{\mathcal{E}}} := \pi_X^*\nabla^{\mathcal{E}_Y}\otimes 1+1\otimes\nabla^{\mathcal{E}_X}.
\end{equation}
From \cite[(4.3)]{liu2017functoriality},
\begin{align}\label{eq:3.05}
\nabla^{\mathcal{E}}:={^0\nabla^{\mathcal{E}}}+\frac{1}{2}\langle \mathcal{S}_{X}(\cdot) e_i,f_{p,X}^H\rangle c(e_i)c(f_{p})
+\frac{1}{4}\langle \mathcal{S}_{X}(\cdot)f_{p,X}^H,f_{q,X}^H\rangle c(f_{p})c(f_{q})
\end{align}
is a $G$-invariant Clifford connection  on $(\mathcal{E},h^\mathcal{E})$ associated with $\nabla^{TZ}$.
Here $\mathcal{S}_X$ is the tensor in (\ref{eq:2.05}) associated with 
$\pi_X$.

Let $D_X^{\mathcal{E}_X}$ and $D_Z^{\mathcal{E}}$ be the family Dirac operators with respect to $(g^{TX},\nabla^{\mathcal{E}_X})$ and $(g^{TZ},\nabla^{\mathcal{E}})$ respectively. Denote by $\mathscr{E}_X$ the infinite dimensional vector bundle over $V$
associated with $\mathcal{E}_X$. We shall denote by $\langle\cdot,\cdot\rangle_{\mathscr{E}_X}$ the $L^2$-inner product on $\mathscr{E}_X$ and $\nabla^{\mathscr{E}_X}$ the $G$-invariant $\langle\cdot,\cdot\rangle_{\mathscr{E}_X}$-preserving connection
as before. 
We define the inner product $\langle\cdot,\cdot\rangle_{\mathcal{E}_Y\otimes\mathscr{E}_X}$, based on $h^{\mathcal{E}_Y}$ and $\langle\cdot,\cdot\rangle_{\mathscr{E}_X}$. Set 
\begin{equation}
  \nabla^{\mathcal{E}_Y\otimes \mathscr{E}_X} := \nabla^{\mathcal{E}_Y}\otimes 1 + 1\otimes\nabla^{\mathscr{E}_X}
\end{equation}
on the bundle $\mathcal{E}_Y\otimes\mathscr{E}_X\to V$. 

We assume that $\ker D_X^{\mathcal{E}_X}$ forms a vector bundle over $V$. Let
\begin{equation}\label{PkerDX}
  P^{\ker {D_X^{\mathcal{E}_X}}} : \mathcal{E}_Y\otimes\mathscr{E}_X\to \mathcal{E}_Y\otimes\ker 
  D_X^{\mathcal{E}_X}
\end{equation}
be
the orthogonal projection with respect to $\langle\cdot,\cdot\rangle_{\mathcal{E}_Y\otimes\mathscr{E}_X}$. It is clear that $P^{\ker{D_X^{\mathcal{E}_X}}}$ induces metric, denoted by $\langle\cdot,\cdot\rangle_{\mathcal{E}_Y\otimes\ker {D_X^{\mathcal{E}_X}}}$, and connection, denoted by $\nabla^{\mathcal{E}_Y\otimes\ker {D_X^{\mathcal{E}_X}}}$, on $\mathcal{E}_Y\otimes\ker D_X^{\mathcal{E}_X}$. 
Note that all these data constructed on $\mathcal{E}_Y\otimes\ker D_X^{\mathcal{E}_X}$ are $G$-invariant. 

Set 
\begin{equation}
  D_H :=\sum_{p=1}^{\dim Y} c(f_p)\nabla^{\mathcal{E}_Y\otimes\mathscr{E}_X}_{f^H_{p}}.
\end{equation}
Let $D_Y^{\mathcal{E}_Y\otimes\ker {D_X^{\mathcal{E}_X}}}$ be the fiberwise Dirac operator associated with $(g^{TY},\nabla^{\mathcal{E}_Y\otimes\ker {D_X^{\mathcal{E}_X}}})$. Then it is clear that 
\begin{equation}\label{DYdefinition}
  D_Y^{\mathcal{E}_Y\otimes\ker {D_X^{\mathcal{E}_X}}} = P^{\ker {D_X^{\mathcal{E}_X}}}\circ D_H\circ P^{\ker {D_X^{\mathcal{E}_X}}}.
\end{equation}
By \cite[Theorem 10.19]{berline1992heat}, we have 
\begin{equation}\label{DZdecomposition}
  D_Z^{\mathcal{E}} = D_X^{\mathcal{E}_X} + D_H + C,
\end{equation}
where
\begin{equation}\label{defnC}
  C = -\frac{1}{8} \langle \mathcal{T}_X(f^H_{p,X},f^H_{q,X}),e_i\rangle c(e_i)c(f_{p})c(f_{q}).
\end{equation}
Here $\mathcal{T}_X$ is the torsion tensor associated with the fibration $\pi_X$.

\subsection{The bundles of spectral sequence}
With all these geometric data prepared, in this subsection, we will define a series of bundles over $S$, denoted by $\{\mathscr{E}_r\}_{r=0,1,\cdots,\infty}$. 

Compared with \cite[(6.9)]{berthomieu1994quillen} and \cite[(2.13)]{ma2002functoriality}, we make the following definition.
\begin{definition}\label{defnEr}
  For $b\in S$, $r\in \mathbb{Z}_+$, we define
\begin{equation}\label{definitionofEr}
	\begin{aligned}
		\mathscr{E}_{r,b} := \Big\{s_0\in \mathscr{E}_b: & \text{ There exist } s_1,\cdots,s_{r-1}\in \mathscr{E}_b, \text{ such that } \\
    & D_X^{\mathcal{E}_X}s_0 = 0,\ D_Hs_0+ D_X^{\mathcal{E}_X}s_1 = 0,\ Cs_0+D_Hs_1 + D_X^{\mathcal{E}_X}s_2=0, \\
		&  \cdots, Cs_{r-3}+D_Hs_{r-2} + D_X^{\mathcal{E}_X}s_{r-1}= 0 \Big\}.
	\end{aligned}
\end{equation}
\end{definition}
In what follows, for $s_0\in\mathscr{E}_r$ we shall write $\varphi_r(s_0)=(s_1,\cdots,s_{r-1})$, which are the elements in (\ref{definitionofEr}).

Note that $\mathscr{E}_{1,b}=C^{\infty}(Y_b, \mathcal{E}_Y\otimes\ker D_X^{\mathcal{E}_X})$, hence $\mathscr{E}_{2,b}=\ker D_{Y_b}^{\mathcal{E}_Y\otimes\ker D_X}$ is a finite dimensional vector space. For $r> 2$, $\mathscr{E}_{r,b}\subset \mathscr{E}_{2,b}$. 
So for $r\geq 2$, $\dim\mathscr{E}_{r,b}<+\infty$. We assume that $\{\mathscr{E}_{r,b}\}_{b\in S}$ constitute complex vector bundles for $r\geqslant 2$. Then they should be $G$-equivariant and $\mathbb{Z}_2$-graded.

\begin{remark}
  When the Dirac operator is the Dolbeault operator, $\mathscr{E}_r$ reduces to \cite[(6.9)]{berthomieu1994quillen} and for signature operator, $\mathscr{E}_r$ becomes the term $\mathcal{E}_r$ in \cite[(2.13)]{ma2002functoriality} and \cite[Theorems 5.9, 5.10]{bunke2004index}.
  For Dolbeault and signature operator, $\mathscr{E}_r$ can be interpreted as terms of Leray spectral sequences (see \cite[Theorem 6.1]{berthomieu1994quillen} and \cite[Proposition 2.1]{ma2002functoriality}). However, in our general case, there is no topological meaning for $\mathscr{E}_r, r\geqslant 2$. Hence we need the assumption above.
\end{remark}


Now we will construct the geometric data and the equivariant eta form for $\mathscr{E}_r$. 
  
  For $r=0$, set $\mathscr{E}_0=\mathscr{E}$, the infinite dimensional bundle over $S$ whose fiber is the space of the 
  smooth sections of $\mathcal{E}$ over $Z$. By abusing the notation, we write $h_0$ for $\langle\cdot,\cdot\rangle_{\mathscr{E}}$, the metric defined in (\ref{<>scrE}) on $\mathscr{E}$. We write $\nabla^0=\nabla^{\mathscr{E}}$ and $D_0= D_X^{\mathcal{E}_X}$. Let $B_0$ be the Bismut superconnection associated with $(T^H_XW,g^{TX},\nabla^{\mathcal{E}_X})$. 
  
  For any $g\in G$, the equivariant Bismut-Cheeger eta form 
  \begin{equation}
    \widetilde{\eta}_g(\underline{\pi_X^g},\underline{\mathcal{E}_X})\in \Omega^\bullet(V^g),
  \end{equation}
  is well-defined as in (\ref{defnetaform}).  Here $\underline{\pi_X^g}$ stands for 
  \begin{equation}\label{eq:3.14}
    \underline{\pi_X^g} := \left(\pi_X|_{W^g},T^H_X(W|_{V^g}):=T^H_XW|_{V^g}\cap T(W|_{V^g}),g^{TX}\right).
  \end{equation}
  
  For $r=1$, $\mathscr{E}_1=\ker D_0=\ker D_X^{\mathcal{E}_X}$. Set $p_1=P^{\ker {D_X^{\mathcal{E}_X}}}: \mathscr{E}_0\to \ker D_0$.
  Let $h_1$ be the metric on $\mathscr{E}_1$ induced from $h_0$.
   Let
  \begin{equation}
    \nabla^1 := p_1\circ\nabla^0\circ p_1
  \end{equation}
  be the connection on $\mathscr{E}_1$ preserving $h_1$. We denote by $p_1^\bot := 1-p_1$. 

  Let $D_1:= D_Y^{\mathcal{E}_Y\otimes \ker {D_X^{\mathcal{E}_X}}}$ be the Dirac operator defined in (\ref{DYdefinition}). Let $B_1$ be the Bismut superconnection associated with $(T^H_YV,g^{TY},\nabla^{\mathcal{E}_Y\otimes\ker {D_X^{\mathcal{E}_X}}})$. For $g\in G$, we have
  \begin{equation}
    \widetilde{\eta}_g(\underline{\pi_Y},\underline{\mathcal{E}_Y\otimes\ker D_X^{\mathcal{E}_X}})\in\Omega^\bullet(S).
  \end{equation}
  
  For $r\geqslant 2$, let 
  \begin{equation}
    p_r:\mathscr{E}_0\to\mathscr{E}_r
  \end{equation}
  be the orthogonal projection with respect to $h_0$ and $p_r^\bot := 1-p_r$. Let  $h_r$ be the metric on $\mathscr{E}_r$ induced from $h_0$ and
  \begin{equation}
   \nabla^r:= p_r\circ\nabla^0\circ p_r.
  \end{equation}
  Then $\nabla^r$ preserves $h_r$.

  Comparing with \cite[(6.10)]{berthomieu1994quillen} and \cite[(2.14)]{ma2002functoriality}, we define $D_r$ on $\mathscr{E}_r$ by 
  \begin{equation}\label{eq:3.18}
    D_r s_0 := p_r(D_Hs_{r-1}+Cs_{r-2}),
  \end{equation}
  where $s_{r-1},s_{r-2}\in\mathscr{E}$ are the elements in (\ref{definitionofEr}). Then $D_r$ commutes (resp. anticommutes)
  with the $\mathbb{Z}_2$-grading on $\mathcal{E}_r$ when 
  $\dim Z$ is odd (resp. even).

  
\begin{lemma}\label{lemEr=kerDr-1}
  The operator $D_r$ in (\ref{eq:3.18}) is well-defined. That is, it is independent of the choice of $s_1,\cdots,s_{r-1}$ .
 \end{lemma}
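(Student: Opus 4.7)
The plan is to take two sequences $(s_1,\ldots,s_{r-1})$ and $(s_1',\ldots,s_{r-1}')$ both satisfying the chain conditions in (\ref{definitionofEr}) for the same $s_0$, and show that $p_r\bigl(D_H(s_{r-1}-s_{r-1}') + C(s_{r-2}-s_{r-2}')\bigr) = 0$. Writing $t_i := s_i - s_i'$ and adopting the conventions $t_0 := 0$ and $t_{-1} := 0$, subtraction of the chain equations in (\ref{definitionofEr}) yields $D_X^{\mathcal{E}_X} t_1 = 0$, $D_H t_1 + D_X^{\mathcal{E}_X} t_2 = 0$, and $C t_j + D_H t_{j+1} + D_X^{\mathcal{E}_X} t_{j+2} = 0$ for $j = 1,\ldots,r-3$; in particular $t_1$ itself lies in $\mathscr{E}_{r-1,b}$ with witness $(t_2,\ldots,t_{r-1})$.

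To prove the vanishing of $p_r(D_H t_{r-1} + C t_{r-2})$, I would test against an arbitrary $u\in \mathscr{E}_{r,b}$ equipped with its own witness chain $(u_1,\ldots,u_{r-1})$ (and the conventions $u_0 := u$, $u_{-1} := 0$) and verify $\langle D_H t_{r-1} + C t_{r-2}, u\rangle_{\mathscr{E}} = 0$. The two inputs are: (i) the fiberwise self-adjointness of each of $D_X^{\mathcal{E}_X}$, $D_H$ and $C$ on $\mathscr{E}_b$ in the decomposition $D_Z^{\mathcal{E}} = D_X^{\mathcal{E}_X} + D_H + C$ from \cite[Theorem 10.19]{berline1992heat} (for $D_H$ this follows from the construction of $\nabla^{\mathscr{E}_X}$ as a metric-preserving connection on the infinite-dimensional bundle $\mathscr{E}_X$, while for $C$ it follows from the skew-adjointness of the Clifford action together with the antisymmetry of $\mathcal{T}_X$); and (ii) the two chain systems above.

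The calculation is a telescoping manipulation. Self-adjointness first converts $\langle D_H t_{r-1} + C t_{r-2}, u\rangle$ into $\langle t_{r-1}, D_H u\rangle + \langle t_{r-2}, C u\rangle$; substituting $D_H u = -D_X^{\mathcal{E}_X} u_1$ from the $u$-chain, moving $D_X^{\mathcal{E}_X}$ back to the $t$-side by self-adjointness, and using $D_X^{\mathcal{E}_X} t_{r-1} = -C t_{r-3} - D_H t_{r-2}$ from the $t$-chain, one full round of the manipulation sends the inner product to $-\langle D_X^{\mathcal{E}_X} t_{r-1-k}, u_{k+1}\rangle + \langle t_{r-2-k}, C u_k\rangle$ with $k$ incremented by one. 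After $r-2$ rounds the $t$-indices hit $t_1$ and $t_0$ while the $u$-indices hit $u_{r-1}$ and $u_{r-2}$, and both $D_X^{\mathcal{E}_X} t_1 = 0$ and $t_0 = 0$ force the expression to vanish.

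The main point of care is the boundary bookkeeping: the shortest $t$-relation $D_H t_1 + D_X^{\mathcal{E}_X} t_2 = 0$ lacks a $C$-term, and the top $u$-relation $D_X^{\mathcal{E}_X} u = 0$ lacks both $D_H$- and $C$-terms; the conventions $t_{-1} = u_{-1} = 0$ absorb these asymmetries into a uniform recursion step, so every invocation of a chain equation stays within its allowed index range. The same argument can equivalently be organized as an induction on $r$ whose inductive step is exactly one cycle of the rearrangement described above.
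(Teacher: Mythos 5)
Your argument is correct, but it takes a genuinely different route from the paper. The paper proceeds by a joint induction: it proves, simultaneously with well-definedness of $D_r$, the structural isomorphism $\ker D_{r-1}\simeq\mathscr{E}_r$ in (\ref{eq:3.19}); it then reduces the ambiguity of $D_r$ to the single identity $(D_r-D_r')s_0 = p_r D_{r-1}t_0$ with $t_0:=s_1-s_1'\in\mathscr{E}_{r-1}$, and kills the right-hand side via $\operatorname{im}D_{r-1}\perp\ker D_{r-1}=\mathscr{E}_r$ (self-adjointness of the \emph{reduced} operator $D_{r-1}$). In contrast, you bypass (\ref{eq:3.19}) entirely and test $D_Ht_{r-1}+Ct_{r-2}$ directly against an arbitrary $u\in\mathscr{E}_r$ with its own witness chain, telescoping with the self-adjointness of the three \emph{ambient} operators $D_X^{\mathcal{E}_X}$, $D_H$ and $C$ in the splitting $D_Z^{\mathcal{E}}=D_X^{\mathcal{E}_X}+D_H+C$. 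The trade-off is real: the paper's route yields $\ker D_{r-1}\simeq\mathscr{E}_r$ as a by-product (a fact it records as a corollary and reuses later, e.g.\ in Corollary \ref{corkerDZEinfty}), while yours is more elementary and self-contained, resting only on the self-adjointness of the pieces of $D_Z^{\mathcal{E}}$ and the chain equations; it avoids the interleaved induction and the slightly delicate argument for $\ker D_k\subset\mathscr{E}_{k+1}$ that the paper has to carry out. Two small remarks: the self-adjointness of $D_H$ on $\mathscr{E}_b$ deserves one line of justification (it is a fiberwise Dirac operator on $Y_b$ twisted by the infinite-dimensional bundle $\mathscr{E}_X$ with a $\langle\cdot,\cdot\rangle_{\mathscr{E}_X}$-preserving connection, and $dv_Z = dv_Y\,dv_X$ since $g^{TZ}=\pi_X^*g^{TY}\oplus g^{TX}$), and your round count is off by one --- the recursion hits $-\langle D_X^{\mathcal{E}_X}t_1,u_{r-1}\rangle+\langle t_0,Cu_{r-2}\rangle$ at index $k=r-2$, after $r-1$ applications of the cycle starting from $\langle D_Ht_{r-1}+Ct_{r-2},u_0\rangle$ --- but neither point affects the validity of the argument.
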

\begin{proof}
	When $r=1$, $D_1=D_X^{\mathcal{E}_X}$ is well-defined. We assume that $D_{r'}$ is well-defined for any $r'\leqslant r$. We shall prove that $D_{r+1}$ is legitimate. Suppose that $\varphi_r(s_0) = (s_1,\cdots,s_{r-1}), \varphi'_r(s_0) = (s_1',\cdots,s_{r-1}')$. We will show that $D_rs_0= p_r(D_Hs_{r-1}+Cs_{r-2}), D_r's_0= p_r(D_Hs'_{r-1}+Cs'_{r-2})$ define the same operator. 

  We claim that the following isomorphism holds for $r$:
  \begin{equation}\label{eq:3.19}
    \ker D_{r-1}\simeq \mathscr{E}_{r}.
  \end{equation}

  Then we define $t_k:=s_{k+1}-s'_{k+1}, k=0,\cdots,r-2$. So we have that $D_X^{\mathcal{E}_X}t_0 = 0$, $D_Ht_0 + D^{\mathcal{E}_X}_X t_1 = 0$, $\cdots$, $Ct_{r-4}+D_Ht_{r-3}+D_X^{\mathcal{E}_X}t_{r-2}=0$, which means that $t_1,\cdots,t_{r-2}$ make $t_0$ in $\mathscr{E}_{r-1}$. We have 
  \begin{equation}
    (D_r-D_r')s_0 = p_r(D_Ht_{r-2}+Ct_{r-3}) = p_rp_{r-1}(D_Ht_{r-2}+Ct_{r-3}) = p_r D_{r-1}t_0.
  \end{equation}
  By (\ref{eq:3.19}), $D_{r-1}t_0 \in \mathrm{im} D_{r-1} = (\mathscr{E}_r)^\bot$. Thus $(D_r-D_r')s_0=p_r D_{r-1}t_0=0 $. So $D_r=D_r'$, which proves the Lemma \ref{lemEr=kerDr-1}.

  Now we prove (\ref{eq:3.19}). For $r=1$, $\mathscr{E}_1\simeq \ker D_0$ follows from the definition. We assume that $\ker D_{r'-1}\cong \mathscr{E}_{r'}$ for  $r'\leqslant k, k<r$. We only need to prove that it holds for $k+1$.

		On one hand, for $s_0\in \mathscr{E}_{k+1}$, Let $\varphi_{k+1}(s_0) = (s_1,\cdots,s_k)$. By  (\ref{eq:3.18}),
		\begin{equation}
		D_ks_0 = p_k(D_Hs_{k-1}+Cs_{k-2}) = -p_kD_X^{\mathcal{E}_X}s_k.
		\end{equation}
		Since  $\mathscr{E}_1=\ker D_X^{\mathcal{E}_X}$, we know that $D_X^{\mathcal{E}_X}s_k\in (\ker D_X^{\mathcal{E}_X})^\bot = (\mathscr{E}_1)^\bot \subset (\mathscr{E}_k)^\bot$. So $D_ks_0=0$, which implies that $\mathscr{E}_{k+1}\subset \ker D_k$.

      On the other hand, we need to show that
      \begin{equation}\label{kerDrEr+1}
        \ker D_k\subset\mathscr{E}_{k+1}.
      \end{equation}
      Suppose that $s_0\in\mathscr{E}_k$ satisfying 
      \begin{equation}\label{Drs0=0}
        D_ks_0 = p_k(D_Hs_{k-1}+Cs_{k-2}) = 0.
      \end{equation}
    When $k=1$, by setting 
      \begin{equation}
          s_1= - (D_X^{\mathcal{E}_X})^{-1}p_1^\bot D_Hs_0,
      \end{equation}
      we fulfill the equation below 
      \begin{equation}
        D_X^{\mathcal{E}_X}s_1 + D_Hs_0=0.
      \end{equation}
      Now by assuming that (\ref{eq:3.19}) holds for any $k'\leqslant k$ and the assumption (\ref{Drs0=0}), we may write 
      \begin{equation}\label{kerDrEr+1'}
        \begin{aligned}
          D_Hs_{k-1}+Cs_{k-2} & = D_{k-1}s_0^{(1)} + p_{k-1}^\bot (D_Hs_{k-1}+Cs_{k-2}) \\
          & = D_{k-1}s_0^{(1)} + D_{k-2}s_0^{(2)} + p_{k-2}^\bot (D_Hs_{k-1}+Cs_{k-2}) \\
          & = \cdots = D_{k-1}s_0^{(1)} + D_{k-2}s_0^{(2)} + \cdots +D^{\mathcal{E}_X}_X s_0^{(k)},
        \end{aligned}
      \end{equation}
      where $s_0^{(i)}\in\mathscr{E}_{k-i}$ with $\varphi_{k-i}(s_0^{(i)}) = (s_1^{(i)},\cdots,s_{k-i-1}^{(i)})$. 

      Note that under the assumption (\ref{eq:3.19}), 
      \begin{equation}\label{kerDrEr+1''}
        \begin{aligned}
          D_{k-1}s_0^{(1)} & = p_{k-1}( D_H s_{k-2}^{(1)} + C s_{k-3}^{(1)})  = D_H s_{k-2}^{(1)} + C s_{k-3}^{(1)} - p_{k-1}^\bot ( D_H s_{k-2}^{(1)} + C s_{k-3}^{(1)}) \\
          & =  D_H s_{k-2}^{(1)} + C s_{k-3}^{(1)} + D_{k-2}s_{0}^{(1,1)} + p_{k-3}^\bot (D_H s_{k-2}^{(1)} + C s_{k-3}^{(1)}) \\
          & = \cdots = D_H s_{k-2}^{(1)} + C s_{k-3}^{(1)} + D_{k-2}s_{0}^{(2)'} + \cdots +D_X^{\mathscr{E}_X} s_0^{(k-1)'},
        \end{aligned}
      \end{equation}
      which means that by taking $\varphi_k(s_0)=(s_1,s_1-s_1^{(1)},\cdots,s_{k-1}-s_{k-1}^{(1)})=:(s_1',\cdots,s_{k-1}')$, we may write 
      \begin{equation}
        D_Hs_{k-1}'+Cs_{k-2}' =  D_{k-2}s_{0}^{(1,1)} + \cdots +D_X^{\mathscr{E}_X} s_0^{(1,k-1)}.
      \end{equation}
      At the end, we may find $\varphi_r(s_0)=(\widetilde{s}_1,\cdots,\widetilde{s}_{k-1})$, which makes 
      \begin{equation}
        D_H\widetilde{s}_{k-1}+C\widetilde{s}_{k-2}= -D_X^{\mathscr{E}_X} \widetilde{s}_k,
      \end{equation}
      for some $\widetilde{s}_k\in \mathscr{E}$. So that $\varphi_{k+1}(s_0)=(\widetilde{s}_1,\cdots,\widetilde{s}_k)$ is well-defined and $s_0\in\mathscr{E}_{k+1}$.
\end{proof}

\begin{corollary}
  For $r\geqslant 0$, we have 
  \begin{equation}
    \ker D_r \simeq \mathscr{E}_{r+1}.
  \end{equation}
\end{corollary}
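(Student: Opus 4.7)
The corollary is essentially already contained in the proof of Lemma \ref{lemEr=kerDr-1}: the isomorphism (\ref{eq:3.19}) asserts $\ker D_{r-1} \simeq \mathscr{E}_r$, which upon the index shift $r\mapsto r+1$ is precisely the claimed statement. My plan is therefore to extract and package that intermediate result, taking care to note that strictly speaking the lemma proves the isomorphism only for $r\geqslant 1$, and the case $r=0$ is an immediate check. The argument runs by strong induction on $r$.

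For the base case $r=0$, one has $D_0 = D_X^{\mathcal{E}_X}$ acting on $\mathscr{E}_0 = \mathscr{E}$, and by the definition of $\mathscr{E}_1$ recorded right after Definition \ref{defnEr}, the fiberwise kernel of $D_X^{\mathcal{E}_X}$ is exactly $\mathscr{E}_1$. For the inductive step $r\geqslant 1$, I would verify the two inclusions separately. The easy direction $\mathscr{E}_{r+1}\subset\ker D_r$ uses only orthogonality: given $s_0\in\mathscr{E}_{r+1}$ with chain $\varphi_{r+1}(s_0)=(s_1,\ldots,s_r)$, the last defining relation reads $D_H s_{r-1}+Cs_{r-2} = -D_X^{\mathcal{E}_X}s_r$, which lies in $\mathrm{Im}\,D_X^{\mathcal{E}_X}=(\ker D_X^{\mathcal{E}_X})^\perp = \mathscr{E}_1^\perp\subset\mathscr{E}_r^\perp$ since $\mathscr{E}_r\subset\mathscr{E}_1$; applying $p_r$ then gives $D_r s_0 = 0$.

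The substantive inclusion is $\ker D_r\subset \mathscr{E}_{r+1}$, and here I would reproduce the iterative decomposition (\ref{kerDrEr+1'})--(\ref{kerDrEr+1''}) of the lemma proof. Given $s_0\in\mathscr{E}_r$ with $p_r(D_Hs_{r-1}+Cs_{r-2})=0$, the element $D_Hs_{r-1}+Cs_{r-2}$ lies in $\mathscr{E}_r^\perp$. Writing $\mathscr{E}_r^\perp$ as the orthogonal sum of the successive quotients $\mathscr{E}_{k}\ominus\mathscr{E}_{k+1}$ for $k=0,\ldots,r-1$ and invoking the inductive hypothesis $\ker D_k\simeq \mathscr{E}_{k+1}$ to identify each quotient with $\mathrm{Im}\,D_k$, one decomposes
\begin{equation*}
D_Hs_{r-1}+Cs_{r-2} \;=\; D_{r-1}s_0^{(1)} + D_{r-2}s_0^{(2)} + \cdots + D_X^{\mathcal{E}_X}s_0^{(r)},
\end{equation*}
with $s_0^{(i)}\in\mathscr{E}_{r-i}$. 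One then repeatedly substitutes the chain representation of each $D_{k}s_0^{(i)}$, absorbing terms into modified representatives $(\widetilde{s}_1,\ldots,\widetilde{s}_{r-1})$ of $s_0\in\mathscr{E}_r$, until one arrives at a chain ending with a pure $-D_X^{\mathcal{E}_X}\widetilde{s}_r$ term, thereby witnessing $s_0\in\mathscr{E}_{r+1}$.

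The main obstacle is bookkeeping in this iterative substitution: one has to show that successively replacing $D_{k}s_0^{(i)}$ by its expression $D_H s_{k-1}^{(i)} + C s_{k-2}^{(i)} + (\text{terms in }\mathscr{E}_{k-1}^\perp)$ and folding the $D_H$, $C$ pieces into the chain does terminate and does produce a valid $\varphi_{r+1}(s_0)$. Well-definedness of the modifications relies on Lemma \ref{lemEr=kerDr-1} itself (so the induction is genuinely joint: the lemma and the corollary should be proved together), and termination is guaranteed by the finite filtration length $\mathscr{E}_0\supset\mathscr{E}_1\supset\cdots\supset\mathscr{E}_r$. Once that descent is verified, the corollary follows immediately for all $r\geqslant 0$.
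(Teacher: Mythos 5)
Your proposal is correct and follows essentially the same route as the paper: the corollary is exactly the isomorphism (\ref{eq:3.19}) proved inside the proof of Lemma \ref{lemEr=kerDr-1}, with the index shifted by one, and the paper indeed presents the corollary without a separate proof for this reason. Your observation that the induction proving well-definedness of $D_r$ and the induction proving $\ker D_{r-1}\simeq\mathscr{E}_r$ are intertwined is accurate and matches the structure of the paper's argument.
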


  By assumption, for $r\geq 2$, $\mathscr{E}_r$ is a $G$-equivariant $\mathbb{Z}_2$-graded vector bundle over $S$. By Definition \ref{defnQuillensuperconnection}, we define the superconnection 
  \begin{equation}\label{definitionofBr}
    B_r := \nabla^r + D_r,\quad \text{ for } r\geqslant 2.
  \end{equation}
  By Lemma \ref{lemEr=kerDr-1} and Definition \ref{defnfiniteetafrom}, for any $g\in G$, we could define 
  \begin{equation}\label{eq:3.22}
    \widetilde{\eta}_g(\mathscr{E}_r,\mathscr{E}_{r+1},\nabla^r,\nabla^{r+1})\in\Omega^\bullet(S),\quad r\geqslant 2.
  \end{equation}
  
  As $\mathscr{E}_r\supset\mathscr{E}_{r+1}$ and $\dim\mathscr{E}_r<+\infty$ for $r\geq 2$, there exists $r_0$ such that for $r\geqslant r_0$, $\mathscr{E}_r\simeq\mathscr{E}_{r_0}$. We denote by $\mathscr{E}_\infty$ the convergent one.

  We assume that $\ker D_Z^{\mathcal{E}}$ forms a vector bundle over $S$.
  Let 
  \begin{equation}
    p_\infty: \mathscr{E}_0\to\mathscr{E}_\infty,\quad p:\mathscr{E}_0\to\ker D_Z^{\mathcal{E}}  
  \end{equation}
  be the orthogonal projections associated with $h_0$. We have the natural connection on $\ker D_Z^{\mathcal{E} }$:
  \begin{equation}\label{nablainftykerDZ}
    \nabla^{\ker {D_Z^{\mathcal{E}}}} := p\circ\nabla^0\circ p.
  \end{equation}

\subsection{The family of adiabatic limit}

In this subsection,  we will study a family of adiabatic limits over the base manifold $S$. 

 We define the $G$-invariant metrics over $TZ$ and $TW$ for $T\geq 1$:
\begin{equation}
    g^{TZ}_T := \pi_X^*g^{TY} \oplus \frac{1}{T^2}g^{TX}, \quad   g^{TW}_T := \pi_Z^*g^{TS} \oplus g^{TZ}_T.
\end{equation}
Let $\mathrm{Cl}_T(TZ)$ be the Clifford algebra bundle associated
with $g_T^{TZ}$, the Clifford multiplication of which is denoted by
$c_T$.
It is easy to see that the map
 $(\mathrm{Cl}_T(TZ),g_T^{TZ})\to (\mathrm{Cl}(TZ),g^{TZ})$,
 defined by $c_T(f_p)\mapsto c(f_p), c_T(Te_i)\mapsto c(e_i)$,
is an isomorphism of Clifford algebras. Then we could regard $\mathcal{E}$ as a Clifford module of $\mathrm{Cl}_T(TZ)$ through this isomorphism. Let $\nabla^{TZ,T}$ be the connection defined by (\ref{connectiononrelativebundle}) associated with $(T_Z^HW, g^{TZ}_T)$. Then from \cite[(4.3)]{liu2017functoriality}, as in (\ref{eq:3.05}), 
\begin{align}\label{eq:3.26}
\nabla^{\mathcal{E},T}:={^0\nabla^{\mathcal{E}}}+\frac{1}{2T}\langle \mathcal{S}_{X}(\cdot) e_i,f_{p,X}^H\rangle c(e_i)c(f_{p})
+\frac{1}{4T^2}\langle \mathcal{S}_{X}(\cdot)f_{p,X}^H,f_{q,X}^H\rangle c(f_{p})c(f_{q})
\end{align}
is a $G$-invariant Clifford connection associated with $\nabla^{TZ,T}$.
%

Let $B_T$ be the Bismut
superconnection associated with $(T_Z^HW, g_T^{TZ}, \nabla^{\mathcal{E},T})$.
%
Let
\begin{equation}
	B_{u^2,T} := u\delta_{u^2}\circ B_T\circ\delta_{u^2}^{-1}.
\end{equation}
Let ${^0\nabla^{\mathscr{E}}}$ be the connection on $\mathscr{E}$  with respect to $^0\nabla^{\mathcal{E}}$ defined in \cite[(1.7)]{bismut1986analysis}, which preserves $\langle\cdot,\cdot\rangle_{\mathscr{E}}$ in (\ref{<>scrE}). 
\begin{theorem}\cite[Proposition 5.5]{liu2017functoriality}\label{thmBTBuTdecomposition}
	For $T>0, u>0$,
	\begin{multline}\label{eq:3.29}
			B_{u^2,T} =  uTD_X^{\mathcal{E}_X} + {^0\nabla^{\mathscr{E}}} + uD_H - \frac{1}{4u}\langle \mathcal{S}_X(f^H_{p,X})g^H_{\alpha,Z}, g^H_{\beta,Z} \rangle c(f_{p})g^{\alpha}\wedge g^{\beta}\wedge \\
			- \frac{u}{4T}\langle \mathcal{S}_X(e_i)f^H_{p,X}, f^H_{q,X} \rangle c(e_i)c(f_{p})c(f_{q}) 
			- \frac{1}{4uT}\langle \mathcal{S}_Z(e_i)g^H_{\alpha,Z},g^H_{\beta,Z}\rangle c(e_i)g^{\alpha}\wedge g^{\beta}\wedge \\
			 - \frac{1}{2T}\langle \mathcal{S}_X(e_i)f^H_{p,X},g^H_{\alpha,Z}\rangle c(e_i)c(f_{p})g^{\alpha}\wedge.
	\end{multline}
Here $\mathcal{S}_Z$ is the tensor in (\ref{eq:2.05}) associated with 
$\pi_Z$.
\end{theorem}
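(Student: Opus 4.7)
The plan is to start from the intrinsic definition $B_T = D_{Z,T}^{\mathcal{E}} + \nabla^{\mathscr{E},T} - c_T(\mathcal{T}_Z)/4$ of the Bismut superconnection attached to $(T_Z^HW, g_T^{TZ}, \nabla^{\mathcal{E},T})$, and to isolate the $T$-dependence of each of the three summands before applying the $\delta_{u^2}$-conjugation. The key enabler is the Clifford algebra isomorphism $c_T(f_{p,X}^H)=c(f_p)$, $c_T(Te_i)=c(e_i)$, equivalently $c_T(e_i)=c(e_i)/T$, which lets us view $\mathcal{E}$ as a fixed Clifford module so that every occurrence of $T$ becomes explicit.

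For the Dirac operator part, I would work in the $g_T^{TZ}$-orthonormal frame $\{f_{p,X}^H, Te_i\}$ and expand $D_{Z,T}^{\mathcal{E}} = \sum_p c(f_p)\nabla^{\mathcal{E},T}_{f_{p,X}^H} + T\sum_i c(e_i)\nabla^{\mathcal{E},T}_{e_i}$, then substitute (\ref{eq:3.26}). The leading piece reproduces $TD_X^{\mathcal{E}_X}+D_H$ in accordance with the decomposition (\ref{DZdecomposition}), while the $1/T$ and $1/T^2$ Clifford corrections inside $\nabla^{\mathcal{E},T}$, multiplied by the outer factor $T$ in front of $\sum c(e_i)\nabla^{\mathcal{E},T}_{e_i}$, produce precisely the triple-Clifford term proportional to $\tfrac{1}{T}\langle \mathcal{S}_X(e_i)f_{p,X}^H, f_{q,X}^H\rangle c(e_i)c(f_p)c(f_q)$ that will appear after the final rescaling.

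For $\nabla^{\mathscr{E},T}$ I would apply Bismut's formula \cite[(1.7)]{bismut1986analysis} to the rescaled data $(T_Z^HW, g_T^{TZ}, \nabla^{\mathcal{E},T})$: the $^0\nabla^{\mathcal{E}}$-part generates $^0\nabla^{\mathscr{E}}$, while the Clifford correction in (\ref{eq:3.26}), evaluated on horizontal lifts of vectors from $TS$, yields the mixed form $-\tfrac{1}{2T}\langle \mathcal{S}_X(e_i)f_{p,X}^H, g_{\alpha,Z}^H\rangle c(e_i)c(f_p)g^\alpha\wedge$. For the torsion piece $c_T(\mathcal{T}_Z)/4$, I would use $\mathcal{T}_Z(g_{\alpha,Z}^H, g_{\beta,Z}^H) = -P^{TZ}[g_{\alpha,Z}^H, g_{\beta,Z}^H]$ and split along $TZ = T^HZ\oplus TX \cong \pi_X^*TY\oplus TX$. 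The $TX$-component, combined with $c_T(e_i)=c(e_i)/T$, produces the $\tfrac{1}{T}\langle \mathcal{S}_Z(e_i)g_{\alpha,Z}^H, g_{\beta,Z}^H\rangle c(e_i)g^\alpha\wedge g^\beta\wedge$ term, while the $\pi_X^*TY$-component produces $-\tfrac{1}{4}\langle \mathcal{S}_X(f_{p,X}^H)g_{\alpha,Z}^H, g_{\beta,Z}^H\rangle c(f_p)g^\alpha\wedge g^\beta\wedge$.

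Finally, applying $u\delta_{u^2}\circ(\cdot)\circ\delta_{u^2}^{-1}$ multiplies a piece of horizontal form degree $j$ by $u^{1-j}$, so degree-$0$ Dirac terms gain a factor $u$, degree-$1$ terms are unchanged, and degree-$2$ torsion terms acquire $u^{-1}$, which reproduces the six summands on the right-hand side of (\ref{eq:3.29}). The main obstacle is the detailed bookkeeping: each of the corrections in (\ref{eq:3.26}) combines with either the horizontal or the vertical part of $D_{Z,T}^{\mathcal{E}}$, with $\nabla^{\mathscr{E},T}$, or with the torsion, so separating the contributions requires careful tracking of the three horizontal lifts (into $T_X^HW$, $T_Y^HV$, $T_Z^HW$) alongside $\mathcal{S}_X$, $\mathcal{S}_Z$, and their respective torsions. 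The hypothesis $T_Z^HW\subset T_X^HW$ together with $g^{TZ}=\pi_X^*g^{TY}\oplus g^{TX}$ is what ensures the cross-terms between $\mathcal{S}_X$ and horizontal lifts from $TS$ assemble correctly into the single $\frac{1}{2T}$-term on the last line of (\ref{eq:3.29}).
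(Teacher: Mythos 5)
The paper does not prove Theorem~\ref{thmBTBuTdecomposition}; it cites it directly from \cite[Proposition~5.5]{liu2017functoriality}, so there is no internal proof to compare against. Your strategy — start from the intrinsic form $B_T = D_{Z,T}^{\mathcal{E}} + \nabla^{\mathscr{E},T} - c_T(\mathcal{T}_Z)/4$, convert to the fixed Clifford module via $c_T(f^H_{p,X})=c(f_p)$, $c_T(Te_i)=c(e_i)$, make the $T$-dependence of each summand explicit, and then apply the rescaling $u\delta_{u^2}(\cdot)\delta_{u^2}^{-1}$ (factor $u^{1-j}$ on degree-$j$ pieces) — is the correct and essentially the only route, and you have correctly identified which of the six terms in~(\ref{eq:3.29}) arises from which piece of $B_T$.

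That said, the degree-$0$ bookkeeping as you describe it is not quite complete. You attribute the term $-\frac{u}{4T}\langle\mathcal{S}_X(e_i)f^H_{p,X},f^H_{q,X}\rangle c(e_i)c(f_p)c(f_q)$ solely to the product of the outer factor $T$ in $T\sum_i c(e_i)\nabla^{\mathcal{E},T}_{e_i}$ with the $T^{-2}$-correction in~(\ref{eq:3.26}). But $T\cdot\frac{1}{2T}\langle\mathcal{S}_X(e_j)e_i,f_p\rangle c(e_j)c(e_i)c(f_p)$ is an order-$T^0$ contribution with the wrong Clifford pattern, and the horizontal part $\sum_p c(f_p)\nabla^{\mathcal{E},T}_{f^H_{p,X}}$ likewise produces $O(1/T)$ and $O(1/T^2)$ terms that need to be merged. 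These cancellations and regroupings (ultimately resting on the $\mathcal{S}$--$\mathcal{T}$ identities of \cite[Theorem~1.9]{bismut1986atiyah}) are exactly what produces $D_{Z,T}^{\mathcal{E}}=TD_X^{\mathcal{E}_X}+D_H+\frac{1}{T}C$; invoking that identity directly (the $T$-scaled version of~(\ref{DZdecomposition})) rather than trying to reassemble $C$ by hand from the connection corrections would make the argument cleaner. A similar caveat applies to the degree-$2$ piece: the $T^HZ$-component of $c_T(\mathcal{T}_Z)$ naturally yields $\langle\mathcal{S}_Z(f^H_{p,X})g^H_{\alpha,Z},g^H_{\beta,Z}\rangle$, and the step identifying this with the $\mathcal{S}_X$-version in~(\ref{eq:3.29}) depends nontrivially on the hypotheses $T^H_ZW\subset T^H_XW$ and $g^{TZ}=\pi_X^*g^{TY}\oplus g^{TX}$; you flag these hypotheses as "what ensures the cross-terms assemble correctly," but a complete proof would spell out the underlying torsion identity for the composed fibration.
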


Let ${D_{Z,T}^{\mathcal{E}}}$ be the fiberwise Dirac operator associated with
$(g_T^{TZ}, \nabla^{\mathcal{E},T})$. 
By taking the $0$-form part on $S$ in (\ref{eq:3.29}), 
we have
	\begin{equation}
	D_{Z,T}^{\mathcal{E}} = TD_X^{\mathcal{E}_X} + D_H + \frac{1}{T}C.
	\end{equation}
Let ${\nabla^{\mathscr{E},T}}$ be the connection on $\mathscr{E}$  with respect to $\nabla^{\mathcal{E},T}$ defined in \cite[(1.7)]{bismut1986analysis}, which preserves $\langle\cdot,\cdot\rangle_{\mathscr{E}}$ in (\ref{<>scrE}).
By taking the $1$-form part on $S$ in (\ref{eq:3.29}), we see that
\begin{equation}\label{eq:3.30}
\nabla^{\mathscr{E},T} = {^0\nabla^{\mathscr{E}}} -\frac{1}{2T} \langle \mathcal{S}_X(e_i)f^H_{p,X},g^H_{\alpha,Z} \rangle c(e_i)c(f_{p})g^{\alpha}\wedge.
\end{equation}  

The following lemma will be proved in  Corollary \ref{corkerDZEinfty}.

\begin{lemma}
	We assume that $\ker D^Z_T$ is a $G$-equivariant vector bundle over $S\times [1,+\infty)_T$. Let $p^T:\mathscr{E}_0\to \ker D_{Z,T}^{\mathcal{E}}$ be the orthogonal projection associated with $h_0$. Then there exists $C>0$, $T_0\geq 1$, such that for any 
	$T\geq T_0$, $s\in \mathscr{E}_0$,
	\begin{align}
	\| p^Ts-p_{\infty}s\|\leq \frac{C}{T}\|s\|.
	\end{align}
	Therefore,
	we have 
	\begin{equation}
	\mathscr{E}_\infty \simeq \ker D_Z^{\mathcal{E}}.
	\end{equation}

\end{lemma}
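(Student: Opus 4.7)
The plan is to use adiabatic-limit perturbation theory tailored to the filtration by the spaces $\mathscr{E}_r$. The key input is the decomposition $D_{Z,T}^{\mathcal{E}}=TD_X^{\mathcal{E}_X}+D_H+T^{-1}C$, which suggests seeking approximate kernel elements in the form of formal power series in $1/T$.

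First I would construct approximate kernel elements. Fix $r_0$ with $\mathscr{E}_{r_0}=\mathscr{E}_\infty$. Given $s_0\in\mathscr{E}_\infty$, since $s_0$ lies in every $\mathscr{E}_r$, one can iteratively produce sections $s_1,s_2,\ldots,s_N\in\mathscr{E}_0$ (for $N$ arbitrarily large) satisfying $D_X^{\mathcal{E}_X}s_k+D_Hs_{k-1}+Cs_{k-2}=0$ for $1\leq k\leq N$, with the convention $s_{-1}:=0$. Setting $\tilde s_N(T):=\sum_{k=0}^N T^{-k}s_k$, a direct computation shows that all intermediate terms cancel and
\[
D_{Z,T}^{\mathcal{E}}\tilde s_N(T)=T^{-(N-1)}(D_Hs_N+Cs_{N-1})+T^{-N}Cs_N=O(T^{-(N-1)})\|s_0\|.
\]

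Next, I would establish a uniform spectral gap for $D_{Z,T}^{\mathcal{E}}$: there exist $c>0$ and $T_0\geq 1$ such that for $T\geq T_0$, $\mathrm{Sp}(D_{Z,T}^{\mathcal{E}})\subset\{0\}\cup\{|\lambda|\geq c\}$. This is the main obstacle of the proof. On $(\ker D_X^{\mathcal{E}_X})^{\perp}$, the summand $TD_X^{\mathcal{E}_X}$ supplies a gap of order $T$; on $\ker D_X^{\mathcal{E}_X}=\mathscr{E}_1$ the effective leading operator is $D_1=D_Y^{\mathcal{E}_Y\otimes\ker D_X^{\mathcal{E}_X}}$. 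Iterating this Schur-complement reduction through the filtration $\mathscr{E}_2\supset\mathscr{E}_3\supset\cdots\supset\mathscr{E}_\infty$, the small eigenvalues of $D_{Z,T}^{\mathcal{E}}$ split into groups of orders $T^0,T^{-1},T^{-2},\ldots$, each governed by the operator $D_r$ introduced in (\ref{eq:3.18}). The hypothesis that $\ker D_{Z,T}^{\mathcal{E}}$ forms a vector bundle over $S\times[1,+\infty)_T$ forces the asymptotically small but nonzero eigenvalue branches to be absent, leaving a gap of uniform size $c>0$. This is the analogue, at the level of the filtration by the $\mathscr{E}_r$, of the eigenvalue analysis of \cite{dai1991adiabatic} and \cite{liu2017functoriality}.

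Finally, I would combine these via the standard quasimode estimate. With $N=2$, the construction and the spectral gap yield $\|(1-p^T)\tilde s_N(T)\|\leq c^{-1}\|D_{Z,T}^{\mathcal{E}}\tilde s_N(T)\|=O(T^{-1})\|s_0\|$, hence $\|p^Ts_0-s_0\|=O(T^{-1})\|s_0\|$ for $s_0\in\mathscr{E}_\infty$. For $s\in\mathscr{E}_\infty^{\perp}$, the resolvent formula $p^T=\frac{1}{2\pi\sqrt{-1}}\oint_\gamma(\lambda-D_{Z,T}^{\mathcal{E}})^{-1}d\lambda$ over a small contour enclosing $0$, together with uniform resolvent bounds on $\gamma$ and the approximate kernel constructed above for an orthonormal basis of $\mathscr{E}_\infty$, gives $\|p^Ts\|=O(T^{-1})\|s\|$. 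Summing the two contributions produces the bound $\|p^Ts-p_\infty s\|\leq(C/T)\|s\|$. The isomorphism $\mathscr{E}_\infty\simeq\ker D_Z^{\mathcal{E}}$ then follows: $p^T|_{\mathscr{E}_\infty}$ is $O(1/T)$-close to the inclusion and so injective into $\ker D_{Z,T}^{\mathcal{E}}$ for $T$ large; the vector-bundle hypothesis forces a dimension match and yields $\ker D_{Z,T}^{\mathcal{E}}\simeq\ker D_{Z,1}^{\mathcal{E}}=\ker D_Z^{\mathcal{E}}$.
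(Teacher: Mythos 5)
The central claim of your proof — a \emph{uniform} spectral gap $c>0$ for $D_{Z,T}^{\mathcal{E}}$, derived from the vector-bundle hypothesis — is false, and this is a genuine gap that invalidates the rest of the argument. The hypothesis that $\ker D_{Z,T}^{\mathcal{E}}$ is a vector bundle over $S\times[1,+\infty)_T$ only controls the dimension of the exact kernel; it does \emph{not} exclude eigenvalue branches of order $T^{-(r-1)}$ for $2\leq r<r_0$, which tend to zero as $T\to+\infty$ but are never exactly zero. These branches are precisely the spectrum of the operators $D_r$ acting on $\mathscr{E}_r/\mathscr{E}_{r+1}$ that the whole paper is built around: if they were absent, every term $\widetilde{\eta}_g(\mathscr{E}_r,\mathscr{E}_{r+1},\nabla^r,\nabla^{r+1})$ in Theorem \ref{thmmaintheorem} would vanish identically and the main result would be vacuous. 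Concretely, (\ref{spectradecomposition}) shows that $\mathrm{Sp}(D_{Z,T}^{\mathcal{E},2})\cap[0,2c_1]$ contains clusters at scales $T^{-2(r-1)}$ for $r=2,\dots,r_0$, so the smallest nonzero eigenvalue is only of order $T^{-(r_0-1)}$, not bounded below uniformly.

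This breaks your argument in two places. First, the quasimode estimate with $N=2$ gives $\|(1-p^T)\tilde s_2(T)\|\leq (\text{gap})^{-1}\cdot O(T^{-1})\|s_0\|\lesssim T^{r_0-2}\cdot T^{-1}\|s_0\|$, which is not $O(1/T)$ once $r_0>2$; you would need $N\geq r_0+1$ and a proof that the gap is of order exactly $T^{-(r_0-1)}$ (not smaller). Second, for a fixed small contour $\gamma$ around $0$, the Riesz projector $\frac{1}{2\pi\sqrt{-1}}\oint_\gamma(\lambda-D_{Z,T}^{\mathcal{E}})^{-1}d\lambda$ is \emph{not} $p^T$ for $T$ large: it is the spectral projection onto all the small eigenvalues falling inside $\gamma$, which is a strictly larger space once $T$ is large enough that the $T^{-(r-1)}$ clusters cross $\gamma$. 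Your ``Schur-complement reduction through the filtration'' gestures at the right multi-scale picture but never actually produces a quantitative separation of the eigenvalue clusters, which is exactly what is needed and exactly what the paper's Theorem \ref{thmATZtoAr} supplies. The paper's proof of Corollary \ref{corkerDZEinfty} avoids the false uniform gap by using the resolvent estimate $\|(\lambda-T^{r-1}D_{Z,T}^{\mathcal{E}})^{-1}s-p_r(\lambda-D_r)^{-1}p_r s\|\leq CT^{-1}\|s\|$ together with the nested contour projectors $P_{r,T}=\frac{1}{2\pi\sqrt{-1}}\int_{|\lambda|<\sqrt{c_1}/T^{r-1}}(\lambda-D_{Z,T}^{\mathcal{E}})^{-1}d\lambda$ at the shrinking scale $\sqrt{c_1}/T^{r-1}$, identifying $P_{r_0-1,T}=p^T$ and $P_{r_0-1}=p_\infty$. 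Your sketch would need to be reorganized along these lines (shrinking contours and the resolvent comparison) to be correct; the quasimode construction alone does not deliver the lemma.
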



Compared with (\ref{nablainftykerDZ}), we write 
\begin{equation}\label{eq:3.33}
  \nabla^{\ker {D_{Z,T}^{\mathcal{E}}}} := p^T\circ \nabla^{\mathscr{E},T}\circ p^T,
\end{equation}
From (\ref{eq:3.30}), $\lim_{T\to+\infty}\nabla^{\ker {D_{Z,T}^{\mathcal{E}}}}$ exists. We denote the limit by
\begin{equation}\label{nablainfinity}
  \nabla^\infty = p_\infty {^0\nabla^{\mathscr{E}}}p_\infty.
\end{equation}

We rearrange the parameter by $s=T^{-1}$. Then $\ker {D_{Z,1/s}^{\mathcal{E}}}$ is a $G$-equivariant vector bundle over $S\times [0,1]_s$.
Then we can define the equivariant Chern-Simons forms 
$\widetilde{\mathrm{ch}}_g(\ker {D_{Z}^{\mathcal{E}}}, \nabla^{\infty}, \nabla^{\ker {D_{Z}^{\mathcal{E}}}})$ as in \cite[(1.29)]{liumaInvent}. Moreover (see e.g., \cite[(1.30)]{liumaInvent}), 
\begin{align}\label{eq:3.34}
d\, \widetilde{\mathrm{ch}}_g(\ker D^Z, \nabla^{\infty}, \nabla^{\ker {D_{Z}^{\mathcal{E}}}})=\mathrm{ch}_g(\ker {D_{Z}^{\mathcal{E}}}, \nabla^{\ker {D_{Z}^{\mathcal{E}}}})
-\mathrm{ch}_g(\mathscr{E}_{\infty}, \nabla^{\infty}).
\end{align}

\subsection{The main result}
\begin{assumption}\label{assumptions}
For the main result of this paper, we make the following assumptions:
\begin{itemize}
  \item $ T^H_ZW \subset T^H_XW,\quad g^{TZ} = g^{TX}\oplus \pi_X^*g^{TY} $;
  \item $\ker D_X^{\mathcal{E}_X}$ is a vector bundle over $V$;
  \item $\mathscr{E}_r,r\geqslant 2$ are vector bundles over $S$;
  \item $\ker {D_{Z,T}^{\mathcal{E}}}$ is a vector bundle over $S\times [1,+\infty)_T$.
  \item For $g\in G$, $TX^g$ and $TY^g$ are oriented. So $TZ^g$
  is also oriented.
\end{itemize}
\end{assumption}

\begin{theorem}\label{thmmaintheorem}
  For $g\in G$, under the assumptions above, modulo exact forms on $S$, we have
  \begin{multline}\label{maintheorem}
			\widetilde{\eta}_g  (\underline{\pi_Z}, \underline{\mathcal{E}})  = \widetilde{\eta}_g(\underline{\pi_Y},\underline{\mathcal{E}_Y\otimes\ker D_X^{\mathcal{E}_X}}) + \int_{Y^g} \widehat{\mathrm{A}}_g(TY,\nabla^{TY})\mathrm{ch}_g(\mathcal{E}_Y/\mathcal{S},\nabla^{\mathcal{E}_Y})\widetilde{\eta}_g(\underline{\pi_X^g},\underline{\mathcal{E}_X}) \\
			- \int_{Z^g}\widetilde{\widehat{\mathrm{A}}}_g(TZ,\nabla^{TZ},{^0\nabla^{TZ}})\mathrm{ch}_g(\mathcal{E}/\mathcal{S},\nabla^{\mathcal{E}})\\
			+ \sum_{r=2}^\infty \widetilde{\eta}_g(\mathscr{E}_r,\mathscr{E}_{r+1},\nabla^r,\nabla^{r+1}) + \widetilde{\mathrm{ch}}_g(\ker {D_{Z}^{\mathcal{E}}},\nabla^\infty,\nabla^{\ker {D_{Z}^{\mathcal{E}}}}).
	\end{multline}
\end{theorem}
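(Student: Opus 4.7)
The plan is to obtain (\ref{maintheorem}) by analyzing the adiabatic family $B_{u^2,T}$ of Bismut superconnections on $\pi_Z$ as $T\to+\infty$, and extracting each right-hand-side term from a zone of the defining integral for $\widetilde{\eta}_g(\underline{\pi_Z},\underline{\mathcal{E}})$. The preliminary step is to use the anomaly/variation formula for equivariant eta forms to compare the eta form computed with $g^{TZ}$ and with $g_T^{TZ}$: modulo exact forms they differ by $\int_{Z^g}\widetilde{\widehat{\mathrm{A}}}_g(TZ,\nabla^{TZ,T},\nabla^{TZ})\mathrm{ch}_g(\mathcal{E}/\mathcal{S},\nabla^{\mathcal{E}})$, which by Remark~1.2(3) converges as $T\to+\infty$ to the Chern--Simons term in (\ref{maintheorem}). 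It therefore suffices to establish the statement for the $T$-rescaled eta form in the limit.

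Next, I would split the defining integral
\begin{equation*}
\widetilde{\eta}_g(\underline{\pi_Z},\underline{\mathcal{E}})_T = \int_0^{+\infty}\widetilde{\psi}_S\,\widetilde{\mathrm{Tr}}\bigl[g\,\partial_u B_{u^2,T}\,\exp(-B_{u^2,T}^2)\bigr]\,du
\end{equation*}
into three pieces $(0,\varepsilon)$, $(\varepsilon,A)$, and $(A,+\infty)$, with $\varepsilon\to 0$, $A\to+\infty$ taken in suitable order after $T\to+\infty$. On $(0,\varepsilon)$, local index theory and a transgression argument produce the integral over $Z^g$ and, combining with the limit Chern--Simons correction above, recover the $\int_{Y^g}\widehat{\mathrm{A}}_g(TY,\nabla^{TY})\mathrm{ch}_g(\mathcal{E}_Y/\mathcal{S},\nabla^{\mathcal{E}_Y})\widetilde{\eta}_g(\underline{\pi_X^g},\underline{\mathcal{E}_X})$ contribution after applying the equivariant local index theorem on $X^g$ fibrewise in $Y$. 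On $(\varepsilon,A)$, Bismut--Lebeau analytic localization with respect to the fiberwise operator $TD_X^{\mathcal{E}_X}$ reduces $B_{u^2,T}$ (after projection onto $\ker D_X^{\mathcal{E}_X}$) to the Bismut superconnection $B_1$ associated with $(\underline{\pi_Y},\underline{\mathcal{E}_Y\otimes\ker D_X^{\mathcal{E}_X}})$, producing $\widetilde{\eta}_g(\underline{\pi_Y},\underline{\mathcal{E}_Y\otimes\ker D_X^{\mathcal{E}_X}})$.

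The genuinely new part is the tail $(A,+\infty)$, which must be identified with $\widetilde{R}$. I would further subdivide this tail according to the spectral scales of $D_{Z,T}^{\mathcal{E}} = T D_X^{\mathcal{E}_X} + D_H + T^{-1}C$: the eigenvalues of order $T^{-(r-1)}$ correspond, via the filtration $\mathscr{E}_0\supset\mathscr{E}_1\supset\cdots\supset\mathscr{E}_\infty\simeq\ker D_Z^{\mathcal{E}}$, to the space $\mathscr{E}_r$ constructed in Definition~\ref{defnEr}. On each dyadic zone $u\in[\alpha_r T^{r-1},\alpha_{r+1}T^r]$, one rescales $u=T^{r-1}v$ and applies analytic localization iteratively (adapting Berthomieu--Bismut and Ma's spectral-sequence analysis to the equivariant Clifford module setting), so that the projected superconnection converges, for $T\to+\infty$, to $L_v = \sqrt{v}D_r + \nabla^r$ of Definition~\ref{defnQuillensuperconnection} on $\mathscr{E}_r$. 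Integrating in $v$ and collecting the boundary transgressions, the $r$-th zone contributes precisely $\widetilde{\eta}_g(\mathscr{E}_r,\mathscr{E}_{r+1},\nabla^r,\nabla^{r+1})$. Since $\mathscr{E}_r$ stabilizes, the sum is finite; what remains on the very large scale $u\gg T^{r_0-1}$ is the Chern--Simons transgression between $\nabla^\infty$ and $\nabla^{\ker D_Z^{\mathcal{E}}}$ on $\mathscr{E}_\infty\simeq\ker D_Z^{\mathcal{E}}$, accounting for the last term $\widetilde{\mathrm{ch}}_g(\ker D_Z^{\mathcal{E}},\nabla^\infty,\nabla^{\ker D_Z^{\mathcal{E}}})$.

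The hardest step is the uniform control of $\exp(-B_{u^2,T}^2)$ on the intermediate tail zones, where form-valued perturbations of order $T^{-1}$ compete with genuine eigenvalues of order $T^{-(r-1)}$. An iterative application of Bismut--Lebeau localization adapted to the filtration $\{\mathscr{E}_r\}$, together with uniform resolvent estimates and an induction on $r$, will be needed to decouple these scales, identify the limiting superconnection on each $\mathscr{E}_r$, and prove that the errors accumulated across successive rescalings are exact forms on $S$.
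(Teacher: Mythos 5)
Your proposal follows essentially the same route as the paper: a two-parameter $(u,T)$ deformation of the Bismut superconnection, separation of the $u$-integral into small, intermediate, and large zones, Bismut--Lebeau localization to pass from $\mathscr{E}_0$ to $\ker D_X^{\mathcal{E}_X}$ in the intermediate zone, and a further decomposition of the tail by the spectral scales $T^{-(r-1)}$ of $D_{Z,T}^{\mathcal{E}}$, with the rescaled limit on each scale producing the finite-dimensional superconnection $\nabla^r + \sqrt{v}D_r$ on $\mathscr{E}_r$ and hence the finite eta form $\widetilde{\eta}_g(\mathscr{E}_r,\mathscr{E}_{r+1},\nabla^r,\nabla^{r+1})$. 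The only organizational difference is that you peel off the Chern--Simons correction by the anomaly formula up front and then zone-decompose a single $u$-integral, whereas the paper packages the entire computation as a Stokes argument for the fundamental $1$-form $\gamma = du\wedge\gamma^u + dT\wedge\gamma^T$ on a rectangular contour in $\mathbb{R}_{+,T}\times\mathbb{R}_{+,u}$; these are equivalent up to bookkeeping.

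Two small cautions on your sketch. First, the $(0,\varepsilon)$ zone (the paper's $\Gamma_4$ edge) does not produce a $Z^g$-index density that then somehow ``combines'' with the Chern--Simons term: on the substitution $T\mapsto T\varepsilon^{-1}$, the regime $T\varepsilon^{-1}\to\infty$ gives $\int_{Y^g}\widehat{\mathrm{A}}_g(TY,\nabla^{TY})\mathrm{ch}_g(\mathcal{E}_Y/\mathcal{S},\nabla^{\mathcal{E}_Y})\widetilde{\eta}_g(\underline{\pi_X^g},\underline{\mathcal{E}_X})$ via the fibrewise local index theorem on $X^g$ (Theorem~\ref{thmintermediate4}(2)), while the complementary regime reproduces the Chern--Simons form from $\gamma_A$ (Theorem~\ref{thmintermediate4}(3)); the eta form on $X$ is genuinely non-local, not a transgression of the $Z^g$-density. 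Second, the order of limits matters and is not $T\to\infty$ first as you write: the paper takes $A\to\infty$, then $T_0\to\infty$, then $\varepsilon\to 0$, and the uniform resolvent estimates (Theorem~\ref{thmATZtoAr}, Lemma~\ref{FruTFruinftyapproximation}) are what justify the dominated convergence needed to commute each limit with the $u$-integral across the dyadic zones.
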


\section{\textbf{The proof of Theorem \ref{thmmaintheorem}}}
In this section, we will prove our main result Theorem \ref{thmmaintheorem}. In Section 4.1, we define a second layer of adiabatic limit, then obtain a $1$-form on $\mathbb{R}\times\mathbb{R}$. In Section 4.2, we follow the same strategy as in \cite[\S 4]{liu2017functoriality} combined with the author's work in \cite{ma2000formes} to prove the main theorem. There is one intermediate theorem we need to prove, which is left to the next section.

\subsection{The fundamental form}
Let $\widehat{S}:=\mathbb{R}_{+,T}\times\mathbb{R}_{+,u}\times S$ and fibration $\widehat{\pi}_Z : \widehat{W} \to \widehat{S}$ with fiber $Z$. We define the metric $\widehat{g}^{TZ}$ of $\widehat{\pi}_Z$ such that 
\begin{equation}
  \widehat{g}^{TZ}|_{(T,u)} = u^{-2}g^{TZ}_T.
\end{equation}
Let $\widehat{P}_W:\widehat{W}\to W$ be the natural projection and $\widehat{\mathcal{E}}:=\widehat{P}_W^*\mathcal{E}$.
Let $\nabla^{\widehat{\mathcal{E}}}$ be the connection on $\widehat{\mathcal{E}}$ such that $\nabla^{\widehat{\mathcal{E}}}|_{(T,u)}
=\nabla^{\mathcal{E},T}$.
Then the Bismut superconnection $\widehat{B}$ with respect to $(\widehat{\pi}_Z,\widehat{g}^{TZ}, \nabla^{\widehat{\mathcal{E}}})$ can be written as (cf. \cite[(4.4)]{liu2017functoriality})
\begin{equation}
  \widehat{B}|_{(T,u)} = B_{u^2,T} + d  T\wedge\frac{\partial}{\partial T} + d  u\wedge\frac{\partial}{\partial u} - \frac{n}{2u}d  u - \frac{n-m}{2T}d  T.
\end{equation}

\begin{definition}
  Let $\gamma:=d  u \wedge \gamma^u + d T\wedge\gamma^T$ be the part of $\psi_{\widehat{S}}\widetilde{\mathrm{Tr}}[g\exp(-{\widehat{B}}^2)]$ of degree one with respect to the coordinate $(T,u)$ with functions $\gamma^u,\gamma^T : \mathbb{R}_{+,T}\times\mathbb{R}_{+,u} \to\Omega^\bullet(S)$.
\end{definition}

It follows from \cite[Proposition 4.2]{liu2017functoriality} that there exists a smooth family $\alpha: \mathbb{R}_{+,T}\times\mathbb{R}_{+,u} \to\Omega^\bullet(S)$ such that 
\begin{equation}\label{exactalpha}
  \left(d u\wedge\frac{\partial}{\partial u} + d T\wedge\frac{\partial}{\partial T}\right)\gamma = d T \wedge d u \wedge d ^S\alpha.
\end{equation}

We take $\varepsilon,A,T_0\in\mathbb{R}$ such that $0\leqslant \varepsilon\leqslant A <+\infty, 1\leqslant T_0<+\infty$. Set $\Gamma = \Gamma_{\varepsilon,A,T_0}$, the contour in $\mathbb{R}_{+,T}\times\mathbb{R}_{+,u}$ with four parts $\Gamma_1$, $\Gamma_2$, $\Gamma_3$, $\Gamma_4$ and $\mathcal{U}$ the domain enclosed by $\Gamma$, as in the Figure \ref{contour1}.
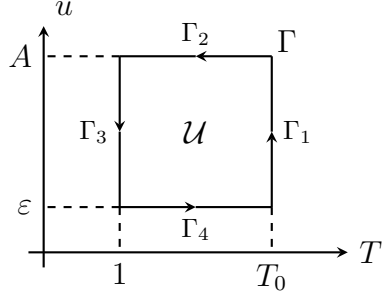
\begin{figure}[!htp]\centering\begin{tikzpicture}[>=stealth,thick]
  \coordinate (a) at (1,0. 6);
  \coordinate (b) at (3,0. 6);
  \coordinate (c) at (1,2. 6);
  \coordinate (d) at (3,2. 6);

  \draw[->] (-0. 2,0) --  (4,0) node[right] {$T$};
  \draw[->] (0,-0. 2) -- (0,3) node[above right] {$u$};

  \draw[->] (a) -- (2,0. 6) node[below] {\footnotesize $\Gamma_4$};
  \draw (2,0. 6) -- (b); 
  \draw[->] (b) -- (3,1. 6) node[right] {\footnotesize $\Gamma_1$};
  \draw (3,1. 6) -- (d); 
  \draw[->] (d) -- (2,2. 6) node[above] {\footnotesize $\Gamma_2$};
  \draw (2,2. 6) -- (c); 
  \draw[->] (c) -- (1,1. 6) node[left] {\footnotesize $\Gamma_3$};
  \draw (1,1. 6) -- (a); 

  \draw[dashed] (a) -- (1,0) node[below] {$1$};
  \draw[dashed] (b) -- (3,0) node[below] {$T_0$};
  \draw[dashed] (a) -- (0,0. 6) node[left] {$\varepsilon$};
  \draw[dashed] (c) -- (0,2. 6) node[left] {$A$};

\node at (2,1. 6) {$\mathcal{U}$};
\node at (3. 2,2. 8) {$\Gamma$};
\end{tikzpicture} 
\caption{The contour $\Gamma$}\label{contour1}
\end{figure}

We denote by $I^0_i:= \int_{\Gamma_i}\gamma$. Then by (\ref{exactalpha}) and Stokes formula, 
\begin{equation}
  \sum_{i=1}^4 I_i^0 = \int_{\partial\mathcal{U}}\gamma = d ^S \left(\int_{\mathcal{U}}\alpha d T\wedge d u\right).
\end{equation}
We take the limits $A\to+\infty, T\to+\infty$ and then $\varepsilon\to 0$ in the indicated order. Let $I_i^k$, $1\leqslant i\leqslant 4$, $ 1\leqslant k\leqslant 3$ denote the value of the part $I^0_i$ after taking the $k$-th limit. Then by \cite[\S 22, Theorem 17]{de1973varietes},
\begin{equation}\label{sumofI3i}
  \sum_{i=1}^4 I^3_i \equiv 0 \mod{d ^S\Omega^\bullet(S)}.
\end{equation}

\subsection{Intermediate results}
With all these superconnections $B_r$, $r\geqslant 0$, we can build the bridge between the fundamental form $\gamma$ and equivariant eta forms $\widetilde{\eta}_g$. 

  For $r=0$, we consider the fibration $\widetilde{W}|_{V^g}:=\mathbb{R}_{+,t}\times W|_{V^g}\to \widetilde{V}^g:=\mathbb{R}_{+,t}\times V^g$ with fiber $X$. 
  Let $\widetilde{P}_W:\widetilde{W}|_{V^g}\to W|_{V^g}$ be the natural projection.
  Set $T^H_X(\widetilde{W}|_{V^g}) = T(\mathbb{R}_+)\oplus\widetilde{P}_W^*(T^H_XW|_{V^g})$, $\widetilde{g}^{TX}|_{\{t\}} = t^{-2}g^{TX}$.
  Set $\widetilde{\mathcal{E}}_X:=\widetilde{P}_W^*\mathcal{E}_X$.
  Let $\nabla^{\widetilde{\mathcal{E}}_X}$ be the connection on $\widetilde{\mathcal{E}}_X$ such that $\nabla^{\widetilde{\mathcal{E}}_X}|_{\{t\}}
  =\nabla^{\mathcal{E}_X}$.
 Let $\widetilde{B}_0$ be the Bismut superconnection associated with $(T^H_X(\widetilde{W}|_{V^g}),\widetilde{g}^{TX}, \nabla^{\widetilde{\mathcal{E}}_X})$. 
 We decompose 
  \begin{equation}
    \psi_{\widetilde{V}^g}\widetilde{\mathrm{Tr}}[g\exp(-\widetilde{B}_0^2)] = d t\wedge\gamma_0(t) + r_0(t),
  \end{equation}
  where $\gamma_0(t),r_0(t)\in\Omega^\bullet(V^g)$. By \cite[(4.18)]{liu2017functoriality}, we have 
  \begin{equation}
    \int_0^{+\infty}\gamma_0(t)d  t= - \widetilde{\eta}_g(\underline{\pi_X^g},\underline{\mathcal{E}_X}).
  \end{equation}
  
  For $r=1$, 
  we consider the fibration $\widetilde{V}:=\mathbb{R}_{+,t}\times V\to \widetilde{S}:=\mathbb{R}_{+,t}\times S$ with fiber $Y$. 
  Let $\widetilde{P}_V:\widetilde{V}\to V$ be the natural projection.
  Set $T^H_Y\widetilde{V} = T(\mathbb{R}_+)\oplus\widetilde{P}_V^*(T^H_XV)$, $\widetilde{g}^{TY}|_{\{t\}} = t^{-2}g^{TY}$.
  Set $\widetilde{\mathcal{E}}_Y:=\widetilde{P}_V^*\mathcal{E}_Y$.
  Let $\nabla^{\widetilde{\mathcal{E}}_Y}$ be the connection on $\widetilde{\mathcal{E}}_Y$ such that $\nabla^{\widetilde{\mathcal{E}}_Y}|_{\{t\}}
  =\nabla^{\mathcal{E}_Y}$.
  Under the Assumption \ref{assumptions}, $\ker D_X^{\widetilde{\mathcal{E}}_X}$ is a vector bundle over $\widetilde{V}$. Let $h^{\ker {D_X^{\widetilde{\mathcal{E}}_X}}}$ and $\nabla^{\ker {D_X^{\widetilde{\mathcal{E}}_X}}}$ be the corresponding induced metric and connection.
  Let $\widetilde{B}_1$ be the Bismut superconnection with respect to 
    $  (T^H_Y\widetilde{V},\widetilde{g}^{TY},\nabla^{\widetilde{\mathcal{E}}_Y\otimes\ker {D_X^{\widetilde{\mathcal{E}}_X}}})$.
  We decompose 
  \begin{equation}
    \psi_{\widetilde{S}}\widetilde{\mathrm{Tr}}[g\exp(-\widetilde{B}_1^2)] = d t\wedge\gamma_1(t) + r_1(t),
  \end{equation}
  with $\gamma_1(t),r_1(t)\in\Omega^\bullet(S)$. Then by \cite[(4.12)]{liu2017functoriality},
  \begin{equation}
    \int_0^{+\infty}\gamma_1(t)d t = -\widetilde{\eta}_g(\underline{\pi_Y},\underline{\mathcal{E}_Y\otimes\ker D_X^{\mathcal{E}_X}}).
  \end{equation}
  
  For $r\geqslant 2$, we denote by $\widetilde{\mathscr{E}}_r\to\widetilde{S}$ the lift of $\mathscr{E}_r$ on $\widetilde{S}$. Let $\widetilde{D}_r$ be the operator on $\widetilde{\mathscr{E}}_r$ such that $\widetilde{D}_r|_{\{t\}\times S} = tD_r$. As in (\ref{Quillensuperconnection}), we define the superconnection $\widetilde{B}_r$. We decompose 
  \begin{equation}
    \psi_{\widetilde{S}}\mathrm{Tr}_s[g\exp(-\widetilde{B}^2_r)] = d t\wedge\gamma_r(t) + r_r(t),
  \end{equation}
  with $\gamma_r(t),r_r(t)\in\Omega^\bullet(S)$. By Definition \ref{defnfiniteetafrom}, we have 
  \begin{equation}\label{eq:4.11}
    \int_0^{+\infty}\gamma_r(t)d  t = -\widetilde{\eta}_g(\mathscr{E}_r,\mathscr{E}_{r+1},\nabla^r,\nabla^{r+1}).
  \end{equation}

Since $\widehat{\mathrm{A}}_g(TX, \nabla^{TX})$ only depends on $g\in G$ and $R^{TZ}$, we can denote
it by $\widehat{\mathrm{A}}_g(R^{TX})$.
Let $R^{TZ}_T$ be the curvature of $\nabla^{TZ,T}$. We define 
(cf. \cite[(4.19)]{liu2017functoriality}),
\begin{equation}
\gamma_A(T) := -\left.\frac{\partial}{\partial s}\right|_{s=0} \widehat{\mathrm{A}}_g\left( R^{TZ}_T + s\frac{\partial\nabla^{TZ,T}}{\partial T} \right).
\end{equation}
By \cite[Proposition 4.5]{liu2017functoriality}, when $T\to+\infty$, $\gamma_A(T)=\mathrm{O}(T^{-2})$, and modulo exact forms on $W^g$,
\begin{equation}
\widetilde{\widehat{\mathrm{A}}}_g(TZ,\nabla^{TZ},{^0\nabla^{TZ}}) = - \int_1^{+\infty}\gamma_A(T)d T.
\end{equation}

The following two theorems are proved in \cite{liu2017functoriality}.
\begin{theorem}\label{thmintermediate1}\cite[Theorem 4.3]{liu2017functoriality}
  \begin{enumerate}
    \item For any $u>0$,
    \begin{equation}
      \lim_{T\to+\infty}\gamma^u(T,u) = \gamma_1(u).
    \end{equation}
    \item For fixed $0<u_1<u_2<+\infty$, there exists $C>0$ such that, for $u\in [u_1,u_2],T\geqslant 1$, we have 
    \begin{equation}
      |\gamma^u(T,u)|\leqslant C.
    \end{equation}
  \end{enumerate}
\end{theorem}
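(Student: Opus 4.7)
The plan is to follow the adiabatic limit strategy of Bismut--Cheeger and Dai as adapted by the first author in \cite{liu2017functoriality}, applied here with $T\to+\infty$ at fixed $u>0$. First I would extract an explicit formula for $\gamma^u(T,u)$ by computing $\widehat{B}^2$ from the decomposition
\[
\widehat{B}|_{(T,u)} = B_{u^2,T} + dT\wedge\tfrac{\partial}{\partial T} + du\wedge\tfrac{\partial}{\partial u} - \tfrac{n}{2u}du - \tfrac{n-m}{2T}dT,
\]
separating the $du$-component and applying $\psi_{\widehat{S}}\widetilde{\mathrm{Tr}}[g\cdot]$. This gives $\gamma^u(T,u)$ as $\widetilde{\psi}_S\,\widetilde{\mathrm{Tr}}[g(\partial_u B_{u^2,T})\exp(-B_{u^2,T}^2)]$ (up to the rescaling factor $n/(2u)$).

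Next I would invoke Theorem \ref{thmBTBuTdecomposition}: writing $B_{u^2,T}=uTD_X^{\mathcal{E}_X}+uD_H+{^0\nabla^{\mathscr{E}}}+\mathrm{O}(T^{-1})$ one sees that as $T\to+\infty$ the leading term $uTD_X^{\mathcal{E}_X}$ forces concentration onto $\ker D_X^{\mathcal{E}_X}$. Applying the Bismut--Lebeau analytic localization technique \cite{bismut1991complex} (precisely as in the proof of \cite[Theorem 4.3]{liu2017functoriality}), one shows that $P^{\ker D_X^{\mathcal{E}_X}}B_{u^2,T} P^{\ker D_X^{\mathcal{E}_X}}$ converges to the Bismut superconnection $\widetilde B_1|_{\{t=u\}}$ on $\mathcal{E}_Y\otimes\ker D_X^{\mathcal{E}_X}$, while the off-diagonal blocks involving $P^{\ker D_X^{\mathcal{E}_X},\bot}$ carry a spectral gap of order $T^2$ and contribute only exponentially small terms. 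A functional-calculus argument together with finite-propagation-speed estimates then yields trace-class convergence of $\exp(-B_{u^2,T}^2)$ to $\exp(-\widetilde{B}_1^2|_{t=u})\circ P^{\ker D_X^{\mathcal{E}_X}}$; likewise for the derivative factor $\partial_u B_{u^2,T}$, whose ${^0\nabla^{\mathscr{E}}}$-component localizes to the corresponding derivative $\partial_t \widetilde{B}_1|_{t=u}$. Identifying the result with $\gamma_1(u)$ as defined by the decomposition of $\psi_{\widetilde S}\widetilde{\mathrm{Tr}}[g\exp(-\widetilde B_1^2)]$ then gives assertion (1).

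For the uniform bound in (2), the key is Gaussian-type off-diagonal heat kernel estimates for $\exp(-B_{u^2,T}^2)$ that are uniform in $T\geq 1$ when $u\in[u_1,u_2]\Subset(0,+\infty)$. Here $u$ stays away from $0$, so no small-time blow-up occurs; the uniformity in $T$ is obtained as in \cite[Section 4]{liu2017functoriality} by decomposing $\mathscr{E}_0$ into $\ker D_X^{\mathcal{E}_X}\oplus(\ker D_X^{\mathcal{E}_X})^{\bot}$ and noting that on the orthogonal complement one has a spectral gap going to infinity while on the kernel the operator stays uniformly bounded in Sobolev norms. Combined with the smoothness of $(T,u)\mapsto \partial_u B_{u^2,T}$ on $[1,+\infty)\times[u_1,u_2]$ and standard Duhamel bounds, this yields the required uniform bound $|\gamma^u(T,u)|\leq C$.

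The main obstacle is the trace-norm control of the heat operator in the adiabatic limit: one must show that the projection onto $\ker D_X^{\mathcal{E}_X}$ in the $X$-direction commutes, up to uniformly controlled remainders, with the heat semigroup generated by $B_{u^2,T}^2$. This is precisely where the Bismut--Lebeau technique is needed; the rest of the argument is then a matter of matching the limit expression with the defining formula for $\gamma_1$.
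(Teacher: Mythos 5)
The paper does not prove this statement itself; it cites it directly from \cite[Theorem 4.3]{liu2017functoriality}, where the argument is precisely the Bismut--Lebeau localization strategy you outline. Your sketch is conceptually aligned with that reference: extract $\gamma^u$ from the $du$-component of the fundamental form, use the decomposition of Theorem \ref{thmBTBuTdecomposition} to isolate the dominant term $uTD_X^{\mathcal{E}_X}$, localize onto $\ker D_X^{\mathcal{E}_X}$, and identify the limiting superconnection with $\widetilde{B}_1|_{t=u}$, with the uniform bound in (2) coming from the same resolvent/heat-kernel estimates on $[u_1,u_2]\times[1,\infty)$.

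One imprecision worth flagging: you describe the off-diagonal contributions coming from $P^{\ker D_X^{\mathcal{E}_X},\perp}$ as ``exponentially small.'' In the Bismut--Lebeau framework the diagonal orthogonal block does decay exponentially fast (spectral gap of order $T^2$ at fixed $u>0$), but the cross-terms coupling $\ker D_X^{\mathcal{E}_X}$ to its orthogonal complement through $uD_H$ enter the resolvent expansion and produce only polynomial remainders, typically $\mathrm{O}(T^{-\delta})$ for some $\delta>0$. This does not affect part (1), since any decay rate suffices for the pointwise limit at fixed $u$, and it does not affect part (2) either; but stating exponential decay of the cross-terms would be incorrect if taken literally and would not survive a careful Duhamel/Volterra expansion of $\exp(-B_{u^2,T}^2)$.
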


\begin{theorem}\label{thmintermediate4}\cite[Theorem 4.6]{liu2017functoriality}
	\begin{enumerate}
		\item For fixed $0<u_1<u_2<+\infty$, there exist $\delta\in (0,1], C>0$ and $T_0\geqslant 1$, such that for any $u\in [u_1,u_2], T\geqslant T_0$, we have 
		\begin{equation}
		|\gamma^T(T,u)|\leqslant\frac{C}{T^{1+\delta}}.
		\end{equation}
		\item For any $T\geqslant 0$, we have 
		\begin{equation}
		\lim_{\varepsilon\to 0}\varepsilon^{-1}\gamma^T(T\varepsilon^{-1},\varepsilon) = \int_{Y^g}\widehat{\mathrm{A}}_g(TY,\nabla^{TY})\wedge\gamma_0(T).
		\end{equation}
		\item There exists $C>0$, such that for $\varepsilon\in (0,1]$, $\varepsilon\leqslant T\leqslant 1$,
		\begin{equation}
		\varepsilon^{-1}\left| \gamma^T(T\varepsilon^{-1},\varepsilon) + \int_{Z^g}\gamma_A(T\varepsilon^{-1}) \right| \leqslant C.
		\end{equation}
	\end{enumerate}
\end{theorem}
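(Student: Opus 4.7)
The three parts of Theorem \ref{thmintermediate4} are statements about the $dT$-component of the fundamental form, so the plan begins by extracting an explicit formula for $\gamma^T(T,u)$. Using the decomposition $\widehat{B}|_{(T,u)} = B_{u^2,T} + dT\wedge(\partial_T - \frac{n-m}{2T}) + du\wedge(\partial_u - \frac{n}{2u})$ and Duhamel's formula, one computes
\begin{equation*}
\gamma^T(T,u) = \widetilde{\psi}_S \widetilde{\mathrm{Tr}}\!\left[g\,\tfrac{\partial B_{u^2,T}}{\partial T}\, e^{-B_{u^2,T}^2}\right] + \text{shift contributions from } \tfrac{n-m}{2T}.
\end{equation*}
By (\ref{eq:3.29}), $\partial_T B_{u^2,T} = u D_X^{\mathcal{E}_X} + \mathrm{O}(T^{-2})$, with the $\mathrm{O}(T^{-2})$ consisting of the three Clifford quadratic blocks whose coefficients contain explicit $1/T^2$ factors. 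This formula is the common starting point for all three parts.

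For part (1), with $u$ in a fixed compact interval and $T\to+\infty$, the plan is to run the adiabatic reduction of Bismut-Cheeger and Dai to the "kernel bundle" $\mathscr{E}_1 = \ker D_X^{\mathcal{E}_X}$. Writing $B_{u^2,T}$ as a block operator with respect to the orthogonal splitting $\mathscr{E} = \mathscr{E}_1 \oplus \mathscr{E}_1^\perp$, the $(\perp,\perp)$-block contains $uT D_X^{\mathcal{E}_X}$ and therefore produces exponential decay $e^{-cu^2T^2}$, while a Born-Oppenheimer/parametrix argument reduces the restriction to $\mathscr{E}_1$ to the Bismut superconnection $B_1$ up to a correction of size $\mathrm{O}(T^{-2})$. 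Crucially, the leading $uD_X^{\mathcal{E}_X}$ piece of $\partial_T B_{u^2,T}$ is killed after projection onto $\mathscr{E}_1$, so the only surviving contribution is the explicit $\mathrm{O}(T^{-2})$ tail, yielding the desired $\mathrm{O}(T^{-1-\delta})$ bound (one takes $\delta=1$). Finer decay of the off-diagonal couplings can be controlled via the spectral sequence bundles $\mathscr{E}_r$ constructed in Section 3.2, exactly as in \cite{dai1991adiabatic}.

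Part (2) is a joint small-$\varepsilon$/large-$T$ adiabatic limit. I would set $T' = T\varepsilon^{-1}$ and substitute $u=\varepsilon$ into (\ref{eq:3.29}); then $u T' D_X^{\mathcal{E}_X} = T D_X^{\mathcal{E}_X}$, and the remaining pieces of $B_{u^2,T'}$ separate into a leading superconnection built on the $\pi_X$-direction and couplings to $TY$ of order $\varepsilon$. The factor $\varepsilon^{-1}$ in front exactly compensates the volume scaling that appears when we apply the Bismut-Lebeau analytic localization around $Y^g \subset V^g$ combined with a Getzler rescaling of the Clifford variables tangent to $Y$. In the limit $\varepsilon \to 0$ the rescaled heat kernel converges to a product of a harmonic-oscillator kernel on the normal bundle $N_{Y^g/Y}$ — whose supertrace provides $\widehat{\mathrm{A}}_g(TY,\nabla^{TY})$ after fiber integration — and the $X$-fiberwise integrand $\gamma_0(T)$ from the definition of $\widetilde{\eta}_g(\underline{\pi_X^g},\underline{\mathcal{E}_X})$, giving the stated identity.

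Part (3), the uniform bound for $\varepsilon \leq T \leq 1$, demands finer control: here $T/\varepsilon$ need not be large, so the naive small-time expansion is insufficient, and one must compare $\gamma^T(T\varepsilon^{-1},\varepsilon)$ against the Chern-Simons integrand $\gamma_A(T\varepsilon^{-1})$. The plan is again to localize near $Z^g$ via Bismut-Lebeau, then apply a two-parameter Getzler rescaling adapted to the full fibration $\pi_Z$; the $T$-derivative picks out exactly the Chern-Simons term between $\nabla^{TZ,T}$ and $^0\nabla^{TZ}$, whose fiberwise integral is $\int_{Z^g}\gamma_A(T\varepsilon^{-1})$, and the remainder is shown to be $\mathrm{O}(\varepsilon)$ by a Taylor expansion of the rescaled kernel beyond the leading local-index term. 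The main obstacle in the whole argument is producing heat-kernel estimates uniform in both $T$ and $\varepsilon$ — in particular, uniform off-diagonal Gaussian bounds and uniform remainder estimates in the Getzler expansion that survive the degeneracy of the metric $g^{TZ}_T$ as $T\to\infty$; this is precisely the step where the analytic localization technique of \cite{bismut1991complex} has to be implemented with care in the present two-parameter setting.
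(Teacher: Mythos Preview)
The paper does not give its own proof of this theorem: it is quoted verbatim from \cite[Theorem 4.6]{liu2017functoriality} and simply cited (``The following two theorems are proved in \cite{liu2017functoriality}''), so there is nothing in the present paper to compare your argument against beyond the bare citation.

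Your sketch is in the right spirit and outlines the approach one finds in \cite{liu2017functoriality} and its antecedents (Bismut--Cheeger, Dai, Bismut--Lebeau): extract $\partial_T B_{u^2,T}$ from (\ref{eq:3.29}), for (1) use the block decomposition relative to $\mathscr{E}_1=\ker D_X^{\mathcal{E}_X}$ and adiabatic reduction, for (2) localize near $Y^g$ and Getzler-rescale in the $Y$-directions, and for (3) localize near $Z^g$ with a two-parameter rescaling and compare with the Chern--Simons integrand. Two cautions. First, in part (1) your claim that one may take $\delta=1$ is too quick: while the explicit $\partial_T$ of (\ref{eq:3.29}) is $uD_X^{\mathcal{E}_X}+\mathrm{O}(T^{-2})$, the heat kernel $\exp(-B_{u^2,T}^2)$ itself carries $T$-dependence, and the off-diagonal couplings between $\mathscr{E}_1$ and $\mathscr{E}_1^\perp$ only give an extra $T^{-\delta}$ for some $\delta\in(0,1]$ after the full Bismut--Lebeau machinery; the cited theorem only asserts existence of such a $\delta$, not $\delta=1$. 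Second, you correctly flag the uniform two-parameter heat-kernel estimates as the main technical burden, but your sketch stops short of indicating \emph{how} the degeneracy of $g_T^{TZ}$ is handled---in \cite{liu2017functoriality} this is done via finite propagation speed, weighted Sobolev norms adapted to the rescaled metric, and the contour-integral representation of the heat kernel, none of which you mention. As a proof \emph{proposal} the outline is sound; as a proof it would need these analytic ingredients spelled out.
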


Compared with \cite[Theorem 4.4]{liu2017functoriality}, by (\ref{eq:3.30}), (\ref{eq:3.33}) and the equivariant version
of \cite[Theorem 9.19]{berline1992heat}, we have
\begin{theorem}\label{thmintermediate3}
	For $T\geqslant 1$, 
	\begin{equation}
	\lim_{u\to+\infty} \gamma^T(T,u) = \widetilde{\psi}_S\mathrm{Tr}_s\left[g\frac{\partial \nabla^{\ker {D_{Z,T}^{\mathcal{E}}}}}{\partial T} \exp(-\nabla^{\ker {D_{Z,T}^{\mathcal{E}}},2}) \right].
	\end{equation}
\end{theorem}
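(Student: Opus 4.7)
The plan is to extract $\gamma^T(T,u)$ via Duhamel's formula, then recognize the result as a supertrace amenable to the large-$u$ asymptotics of the equivariant Bismut superconnection established by Berline-Getzler-Vergne.

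From
\[
\widehat{B}|_{(T,u)} = B_{u^2,T} + dT\wedge\frac{\partial}{\partial T} + du\wedge\frac{\partial}{\partial u} - \frac{n}{2u}du - \frac{n-m}{2T}dT,
\]
I would compute $\widehat{B}^2$ while tracking the $dT$-part; since $(dT)^2=0$, Duhamel's formula together with cyclicity of the supertrace yield, after applying $\psi_{\widehat{S}}$ and converting via (\ref{tildepsiS}), an identity of the form
\[
\gamma^T(T,u) = \widetilde{\psi}_S\,\widetilde{\mathrm{Tr}}\Bigl[g\,\frac{\partial B_{u^2,T}}{\partial T}\,\exp(-B_{u^2,T}^2)\Bigr],
\]
where the $-\tfrac{n-m}{2T}$ piece is absorbed into the normalization since the corresponding trace sits in the wrong form-degree after pairing with $dT$. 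This is the standard computation used for $\gamma^u$ in the proof of \cite[Theorem 4.4]{liu2017functoriality}, transposed to the variable $T$.

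Next, for each fixed $T\geq 1$, $B_{u^2,T}$ is a rescaled equivariant Bismut superconnection over $S$ associated with the $G$-equivariant family Dirac operator $D_{Z,T}^{\mathcal{E}}$. By Assumption \ref{assumptions}, $\ker D_{Z,T}^{\mathcal{E}}$ is a smooth $G$-equivariant vector bundle over $S$. Thus the equivariant extension of \cite[Theorem 9.19]{berline1992heat}, applied with $T$ regarded as an auxiliary parameter of the family, yields
\[
\lim_{u\to+\infty}\widetilde{\mathrm{Tr}}\Bigl[g\,\frac{\partial B_{u^2,T}}{\partial T}\,\exp(-B_{u^2,T}^2)\Bigr] = \mathrm{Tr}_s\Bigl[g\,\frac{\partial \nabla^{\ker D_{Z,T}^{\mathcal{E}}}}{\partial T}\,\exp\bigl(-(\nabla^{\ker D_{Z,T}^{\mathcal{E}}})^2\bigr)\Bigr].
\]
Here the identification of the projected $T$-derivative with $\partial_T\nabla^{\ker D_{Z,T}^{\mathcal{E}}}$ uses (\ref{eq:3.30}) together with the definition (\ref{eq:3.33}): the only $T$-dependence of $\nabla^{\mathscr{E},T}$ beyond ${}^0\nabla^{\mathscr{E}}$ is of order $\mathrm{O}(1/T)$ and, once conjugated by $p^T$, precisely reproduces the $T$-derivative of the projected connection.

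The main obstacle is justifying the interchange of the $T$-derivative with both the Duhamel integral and the $u\to+\infty$ limit. This amounts to establishing uniform heat-kernel estimates on $\mathscr{E}$ for the family $B_{u^2,T+\varepsilon}$ for small $\varepsilon$ and $u\geq u_0$, controlling the spectral gap of $TD_X^{\mathcal{E}_X}+D_H+T^{-1}C$ away from its kernel locally uniformly in $T$. Given the smooth dependence of $\ker D_{Z,T}^{\mathcal{E}}$ on $T$ ensured by Assumption \ref{assumptions}, this is a routine adaptation of the estimates used in the Bismut-Cheeger framework, and the conclusion then follows exactly as in \cite[Theorem 4.4]{liu2017functoriality} with the roles of $u$ and $T$ interchanged.
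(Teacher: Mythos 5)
Your proposal takes essentially the same route the paper does: the paper's proof of Theorem \ref{thmintermediate3} is a one-line citation to \cite[Theorem 4.4]{liu2017functoriality} for the Duhamel-style extraction of $\gamma^T$, to (\ref{eq:3.30}) and (\ref{eq:3.33}) for identifying the projected $T$-derivative with $\partial_T\nabla^{\ker D_{Z,T}^{\mathcal{E}}}$, and to the equivariant version of \cite[Theorem 9.19]{berline1992heat} for the large-$u$ limit, which is exactly the chain you reconstruct. The uniformity point you flag at the end is the part the paper leaves implicit, and your appeal to the standard Bismut--Cheeger large-time estimates is what the paper is implicitly relying on as well.
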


The last section will be devoted to prove the following  theorem.
Comparing with \cite[Theorem 4.3 (iii)]{liu2017functoriality},
\begin{theorem}\label{thmintermediate2}
  We have the following identity:
  \begin{equation}
    \lim_{T\to+\infty}\int_1^{+\infty} \gamma^u(T,u)d u =  \int_1^{+\infty}\gamma_1(u)d u - \sum_{r=2}^\infty\widetilde{\eta}_g(\mathscr{E}_r,\mathscr{E}_{r+1},\nabla^r,\nabla^{r+1}).
  \end{equation}
\end{theorem}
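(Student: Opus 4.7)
\textbf{Proposal for the proof of Theorem \ref{thmintermediate2}.}

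The plan is to perform a multi-scale decomposition of the integration range $[1,\infty)$ in $u$, calibrated against $T$, so that at each scale the rescaled Bismut superconnection $B_{u^2,T}$ in (\ref{eq:3.29}) degenerates onto a distinct layer of the spectral sequence $\{\mathscr{E}_r\}_{r\ge 1}$. The guiding heuristic is: bounded $u$ sees $\mathscr{E}_1=\ker D_X^{\mathcal{E}_X}$ and gives back $\gamma_1$; $u$ of size $T^{r-1}$ sees the layer $\mathscr{E}_r$ of the spectral sequence and reproduces the finite-dimensional eta form $-\widetilde{\eta}_g(\mathscr{E}_r,\mathscr{E}_{r+1},\nabla^r,\nabla^{r+1})$; sufficiently large $u$ stabilises on $\mathscr{E}_\infty\simeq\ker D_Z^{\mathcal{E}}$ and gives a vanishing contribution.

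Concretely, I would fix exponents $0=\alpha_1<\alpha_2<\cdots<\alpha_N$ with $\alpha_r$ slightly less than $r-1$, put $A_r(T):=T^{\alpha_r}$, and split
\begin{equation*}
\int_1^{+\infty}\gamma^u(T,u)\,du
=\int_1^{A_2(T)}\gamma^u(T,u)\,du
+\sum_{r=2}^{N-1}\int_{A_r(T)}^{A_{r+1}(T)}\gamma^u(T,u)\,du
+\int_{A_N(T)}^{+\infty}\gamma^u(T,u)\,du.
\end{equation*}
On the first interval, Theorem \ref{thmintermediate1} already gives pointwise convergence $\gamma^u(T,u)\to\gamma_1(u)$ together with uniform local boundedness; combining this with a uniform-in-$T$ decay estimate $|\gamma^u(T,u)|\le C u^{-1-\delta}$ for $u\ge 1$ in the relevant range (an estimate of the type established in \cite{liu2017functoriality} via standard Getzler rescaling and heat-kernel bounds) lets dominated convergence conclude that the first piece tends to $\int_1^{+\infty}\gamma_1(u)\,du$.

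For each middle interval I would change variables $u=T^{r-1}v$ and apply the analytic localization technique of Bismut--Lebeau, iterated $r-1$ times. Reading off (\ref{eq:3.29}) with this substitution, one sees that the leading part of $B_{u^2,T}$ acts as $T^{r-1}v$ times an operator built from $D_X^{\mathcal{E}_X}$, $D_H$ and $C$ whose restriction to $\mathscr{E}_{r-1}$ is, after the identification $\ker D_{r-1}\simeq\mathscr{E}_r$ supplied by Lemma \ref{lemEr=kerDr-1}, precisely the operator $D_r$ of (\ref{eq:3.18}). The lower-order terms, together with the $1$-form component of $\nabla^{\mathscr{E},T}$ from (\ref{eq:3.30}), converge after projection to $\nabla^r$. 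Hence the restricted superconnection converges to the rescaled $B_r$ of (\ref{definitionofBr}), and formula (\ref{eq:4.11}) identifies the limit of the $r$-th interval with $-\widetilde{\eta}_g(\mathscr{E}_r,\mathscr{E}_{r+1},\nabla^r,\nabla^{r+1})$. The tail $[A_N(T),\infty)$ is controlled by Theorem \ref{thmintermediate3} and the identification $\mathscr{E}_\infty\simeq\ker D_Z^{\mathcal{E}}$, which ensures the integrand there is of the form $\widetilde{\psi}_S\mathrm{Tr}_s[g\,\partial_T\nabla^{\ker D_{Z,T}^{\mathcal{E}}}\exp(-\nabla^{\ker D_{Z,T}^{\mathcal{E}},2})]$ integrated over a shrinking interval in $v$; choosing $N$ so that $\alpha_N$ exceeds the effective rank of the spectral sequence makes this piece vanish in the limit. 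Summing the contributions yields the theorem.

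The main obstacle will be the uniform spectral-gap analysis required to justify the multi-scale decomposition: one must show that the spectrum of $B_{u^2,T}^2$ on a suitable spectral window decomposes into clusters of size $O(T^{-2(r-1)})$ indexed by $r$, and that on each cluster the restriction of $\exp(-B_{u^2,T}^2)$ converges, uniformly for $v=u/T^{r-1}$ in any compact subset of $(0,\infty)$, to the finite-dimensional heat operator $\exp(-B_{r,v}^2)$. This is precisely the iterated version of the Bismut--Lebeau localization technique alluded to in Remark \ref{remark:1.04}(3); handling the "small--but--not--zero" eigenvalues that move between $\mathscr{E}_r$ and $\mathscr{E}_{r+1}$ at rate $T^{-(r-1)}$, and proving that the corresponding boundary contributions cancel between adjacent scales, is where all the real analytic work is concentrated.
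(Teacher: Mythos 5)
Your heuristic is right — bounded $u$ reproduces $\gamma_1$, $u\sim T^{r-1}$ reproduces the $r$-th finite-dimensional eta form, and very large $u$ stabilizes on $\mathscr{E}_\infty$ — but the method you propose (partitioning the domain $[1,\infty)$ in $u$ at thresholds $T^{\alpha_r}$) is not what the paper does, and the place where you flag the ``real analytic work'' is exactly where this method leaves an unfilled gap.

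The paper decomposes the \emph{integrand}, not the domain. Using the spectral gap in (\ref{c1c2definition})--(\ref{spectradecomposition}), it writes $\gamma^u(T,u)=\{F_{1,u,T}\}^{du}+\{G_{1,u,T}\}^{du}$ via contour integrals over $\Delta_0$ (eigenvalues bounded away from $0$) and $\delta_0$ (the small-eigenvalue cluster), and then recursively expands $G_{1,u,T}$ as a sum $\sum_{r=2}^{r_0}F_{r,T^{1-r}u,T}+G_{r_0,T^{1-r_0}u,T}$ in (\ref{G1uTdecomposition}), each term living on its own spectral cluster. Only after this spectral separation is the change of variables $u\mapsto T^{r-1}u$ performed, term by term, in (\ref{eq:5.33}). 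By contrast, a pure domain decomposition $\int_{T^{\alpha_r}}^{T^{\alpha_{r+1}}}\gamma^u(T,u)\,du$ still integrates an object which for each $(T,u)$ mixes contributions from \emph{all} spectral layers; near $u\to T^{\alpha_r}$ and $u\to T^{\alpha_{r+1}}$ these overlap with the adjacent layers and do not localize to $\gamma_r$. You correctly identify these boundary contributions as the obstacle, but you give no mechanism for their cancellation, whereas in the paper this cancellation is made exact by the contour-integral decomposition together with the asymptotic-expansion bookkeeping in Lemma \ref{lemma:5.7}: the coefficients $a_{r,i,T},b_{r,i,T}$ satisfy the identity $b_{r_0,i,T}T^{(1-r_0)i}+\sum_{r=2}^{r_0}a_{r,i,T}T^{(1-r)i}=-a_{1,i,T}$, and it is precisely this identity that makes the divergent pieces in $Q_{2,T}$ telescope. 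Moreover, the uniform estimates of Lemma \ref{FruTFruinftyapproximation} (borrowed from Ma's method in \cite{ma2000formes}) are what justify the interchange of limit and integral for each $F_{r,\cdot,T}$ and $G_{r,\cdot,T}$ separately; a single dominated-convergence estimate $|\gamma^u(T,u)|\le Cu^{-1-\delta}$, as you suggest for the first block, cannot be true uniformly in $T$ on the whole range $[1,\infty)$, since $\gamma^u$ carries spectral clusters of size $T^{-2(r-1)}$ that make the heat trace decay only begin at $u\sim T^{r-1}$. Unless you replace the domain split by a spectral split (or supply a genuinely new argument for the inter-scale cancellation), the middle and tail blocks of your decomposition cannot be evaluated as claimed.
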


By Theorems \ref{thmintermediate1} - \ref{thmintermediate3}, we can calculate $I^3_i$ in (\ref{sumofI3i}) to prove the main result Theorem \ref{thmmaintheorem}. 
From the same approach in \cite[\S 4.3]{liu2017functoriality},
we have
\begin{align}
I_1^3=-\widetilde{\eta}_g(\underline{\pi_Y},\underline{\mathcal{E}_Y\otimes\ker D_X^{\mathcal{E}_X}})- \sum_{r=2}^\infty\widetilde{\eta}_g(\mathscr{E}_r,\mathscr{E}_{r+1},\nabla^r,\nabla^{r+1}),
\end{align}
\begin{align}
I_3^3=\widetilde{\eta}_g(\underline{\pi_Z},\underline{\mathcal{E}}),
\end{align}
and
\begin{align}
I_4^3=-\int_{Y^g} \widehat{\mathrm{A}}_g(TY,\nabla^{TY})\widetilde{\eta}_g(\underline{\pi_X^g},\underline{\mathcal{E}_X}) 
+ \int_{Z^g}\widetilde{\widehat{\mathrm{A}}}_g(TZ,\nabla^{TZ},{^0\nabla^{TZ}})\mathrm{ch}_g(\mathcal{E}/\mathcal{S},\nabla^{\mathcal{E}}).
\end{align}


By Theorem \ref{thmintermediate3} and (\ref{nablainfinity}), 
\begin{equation}
	\begin{aligned}
		I_2^3 & = -\lim_{A\to+\infty}\lim_{u\to+\infty}\int_1^A\gamma^T(T,u)d T \\
		& = - \lim_{A\to+\infty}\int_1^A\psi_S\widetilde{\mathrm{Tr}}\left[ g \frac{\partial \nabla^{\ker {D_{Z,T}^{\mathcal{E}}}}}{\partial T}\exp(-\nabla^{\ker {D_{Z,T}^{\mathcal{E}}},2})\right]d  T \\
		& = - \lim_{A\to+\infty} \widetilde{\mathrm{ch}}_g(\ker {D_{Z}^{\mathcal{E}}},\nabla^{\ker {D_{Z,A}^{\mathcal{E}}}},\nabla^{\ker {D_{Z}^{\mathcal{E}}}})\\
		& = -\widetilde{\mathrm{ch}}_g(\ker {D_{Z}^{\mathcal{E}}},\nabla^\infty,\nabla^{\ker {D_{Z}^{\mathcal{E}}}}).
	\end{aligned}
\end{equation}

\section{The Proof of Theorems \ref{thmintermediate2}}
The purpose of this section is to prove the Theorems   \ref{thmintermediate2}. In Section 5.1, we analyze the resolvents of Dirac operators $D_{Z,T}^{\mathcal{E}}$ and $D_r$ to establish the relations. In Section 5.2, we follow what Ma have done in \cite{ma2000formes} for the functoriality of holomorphic analytic torsions to give the proof. 

\subsection{Limits of resolvent}
  For $v\in V, b\in S$, we set 
  \begin{equation}
    \begin{aligned}
      \mathbb{E}_v &: = C^\infty (X_v,\pi_Z^*\Lambda(T^*S)\widehat{\otimes}\mathcal{E}_X), \quad
      &\mathbb{E}_{0,b} : = C^\infty (Z_b,\pi_Z^*\Lambda(T^*S)\widehat{\otimes}\mathcal{E}_X), \\
      \mathbb{E}_{1,b} &: = C^\infty (Y_b,\pi_Y^*\Lambda(T^*S)\widehat{\otimes}\mathcal{E}_Y\otimes\ker D_X^{\mathcal{E}_X}), \quad
      &\mathbb{E}_r : = C^\infty(S, \Lambda(T^*S)\widehat{\otimes}\mathscr{E}_r),\quad r\geqslant 2.
    \end{aligned}
  \end{equation}
  For $\mu\in\mathbb{R}$, we could make use of geometric structures to define Sobolev spaces $\mathbb{E}^\mu_v$, $\mathbb{E}^\mu_{0,b}$, $\mathbb{E}^\mu_{1,b}$, $\mathbb{E}^\mu_r$, $r\geqslant 2$ of order $\mu$ respectively.
  We shall denote by $\|\cdot\|_{X,\mu}$, $\|\cdot\|_\mu$, $\|\cdot\|_{Y,\mu}$ and $\|\cdot\|_{r,\mu}$ the corresponding Sobolev norms. We still use the notations $p^T:\mathbb{E}_0\to\Omega^\bullet(S)\widehat{\otimes}\ker D_{Z,T}^{\mathcal{E}}$ and $p_r:\mathbb{E}_0\to\mathbb{E}_r$, $r\geqslant 1$ as the orthogonal projections. We have the projections $p^{T,\bot} := 1-p^T$ and $p_r^\bot := 1-p_r$, $r\geqslant 1$.

Taking $c>0$, for $r\geqslant 2$, let 
\begin{equation}
  U_r:=\left\{ \lambda\in\mathbb{C} : \mathrm{inf}_{\mu\in\mathrm{Sp}(D_r)} |\lambda-\mu|\geqslant c \right\}.
\end{equation}
The proof of the following theorem is almost the same as \cite[Theorem 6.2]{berthomieu1994quillen}.
\begin{theorem}
  For $r\geqslant 2$, $\lambda\in U_r$, there exist linear maps
  \begin{equation}\label{varphirlambda}
    \varphi_{r,\lambda} : \mathbb{E}_0 \to \mathbb{E}_0^{r+1},
  \end{equation}
  such that for $s\in \mathbb{E}_0$, we write $\varphi_{r,\lambda}(s) = (s_0,\cdots,s_r)$, which satisfies
  \begin{equation}\label{DXsr=s}
    \begin{aligned}
      D_X^{\mathcal{E}_X}s_0 & =0, \\
      D_Hs_0 + D_X^{\mathcal{E}_X}s_1 & = 0, \\
      &\vdots \\
      D_X^{\mathcal{E}_X}s_{r-1}+D_Hs_{r-2}+Cs_{r-3} & = 0, \\
      -D_X^{\mathcal{E}_X}s_r-D_Hs_{r-1}-Cs_{r-2} +\lambda s_0 & = s.
    \end{aligned}
  \end{equation}
  And for any $\mu\in \mathbb{R}$, $\varphi_{r,\lambda}$ can be extended to a bounded linear map from $\mathbb{E}_0^\mu$ to $(\mathbb{E}_0^\mu)^{r+1}$.
  Moreover, we have $s_0\in \mathscr{E}_r$, which can be given by 
  \begin{equation}\label{s0=prs}
    s_0 = (\lambda-D_r)^{-1}p_rs.
  \end{equation}
\end{theorem}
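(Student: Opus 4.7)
The plan is to construct $\varphi_{r,\lambda}$ explicitly in close analogy with the proof of \cite[Theorem 6.2]{berthomieu1994quillen}. By comparison with (\ref{definitionofEr}), the first $r$ equations of (\ref{DXsr=s}) are precisely the assertion that $s_0$ belong to $\mathscr{E}_r$ with $(s_1,\dots,s_{r-1})$ as (non-unique) witnesses. The natural starting point is therefore
\begin{equation*}
  s_0 := (\lambda - D_r)^{-1} p_r s,
\end{equation*}
which lies in $\mathscr{E}_r$ because the resolvent is well-defined for $\lambda\in U_r$, and which yields (\ref{s0=prs}) directly.

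Given $s_0$, a tuple $(s_1,\dots,s_{r-1})$ of witnesses exists, but is not unique: two admissible tuples differ by a tuple $(t_1,\dots,t_{r-1})$ that is a witness tuple for $0\in\mathscr{E}_r$, and such a tuple is parametrised (recursively) by a free element $t_1\in\mathscr{E}_{r-1}$, with $t_2,\dots,t_{r-1}$ then determined up to analogous witness tuples of smaller depth. This freedom will be used to enforce the last equation of (\ref{DXsr=s}). Indeed, setting $\rho := s + D_H s_{r-1} + C s_{r-2} - \lambda s_0$, we need $\rho\in\mathrm{Range}(D_X^{\mathcal{E}_X}) = (\ker D_X^{\mathcal{E}_X})^\perp$ in order to define $s_r := -(D_X^{\mathcal{E}_X})^{-1}\rho$ via the bounded pseudodifferential partial inverse of $D_X^{\mathcal{E}_X}$. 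The resolvent identity $(\lambda-D_r)s_0 = p_r s$ together with Lemma \ref{lemEr=kerDr-1} and the definition of $D_r$ already gives $p_r \rho = 0$, so $\rho\in\mathscr{E}_r^\perp$; the obstruction to landing in $\mathscr{E}_1^\perp \subset \mathscr{E}_r^\perp$ lies in the finite-dimensional subspace $\bigoplus_{j=1}^{r-1}(\mathscr{E}_j\ominus\mathscr{E}_{j+1})$.

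To kill the obstruction I would adjust the witnesses by a suitable $(t_1,\dots,t_{r-1})$. The induced change of $\rho$ at the summand $\mathscr{E}_j\ominus\mathscr{E}_{j+1}$ is governed by the restriction of $D_j$ to $\mathscr{E}_j\ominus\mathscr{E}_{j+1}$, which is a bijection: by Lemma \ref{lemEr=kerDr-1}, $\ker D_j=\mathscr{E}_{j+1}$, and $D_j$ has closed range (automatic for $j\ge 2$ on the finite-rank bundle $\mathscr{E}_j$, and from ellipticity of $D_Y^{\mathcal{E}_Y\otimes\ker D_X^{\mathcal{E}_X}}$ for $j=1$). A descending induction on $j$ from $r-1$ down to $1$ then determines a unique admissible correction making $\rho\in\mathrm{Range}(D_X^{\mathcal{E}_X})$; finally $s_r:=-(D_X^{\mathcal{E}_X})^{-1}\rho$ completes the tuple. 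Boundedness on every $\mathbb{E}_0^\mu$ is immediate from elliptic regularity of $(D_X^{\mathcal{E}_X})^{-1}$ (gaining one Sobolev order), from the boundedness on every $\mathbb{E}_0^\mu$ of the projections $p_k$ (smoothing for $k\ge 2$ because $\mathscr{E}_k$ is finite-rank, and onto a smooth finite-rank subbundle for $k=1$), and from the trivial boundedness of the finite-dimensional inverses of $D_j$ on $\mathscr{E}_j\ominus\mathscr{E}_{j+1}$ for $j\ge 2$ together with the bounded inverse of $D_1$ on $\mathscr{E}_1\ominus\mathscr{E}_2$ from ellipticity.

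The principal obstacle is the bookkeeping of this iterative correction: one must verify that the recursive choice of $(t_1,\dots,t_{r-1})$, combined with the cascading compensations in the higher-index witnesses produced by $(D_X^{\mathcal{E}_X})^{-1}$, organises into a single consistent linear map $s\mapsto (s_0,\dots,s_r)$ with uniform Sobolev bounds. This is exactly the combinatorial heart of \cite[Theorem 6.2]{berthomieu1994quillen}; the presence here of the new zeroth-order term $C$ from (\ref{defnC}) only perturbs the explicit formulas without altering the underlying algebra or the estimates.
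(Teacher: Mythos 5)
The paper does not give a self-contained proof of this statement; it simply says that the argument is ``almost the same as \cite[Theorem 6.2]{berthomieu1994quillen}''. Your sketch is indeed a reasonable reconstruction of that argument, and its overall architecture (take $s_0=(\lambda-D_r)^{-1}p_rs$, use the freedom in the witness tuple to push the residue $\rho$ into $\mathrm{Range}(D_X^{\mathcal{E}_X})$, then invert $D_X^{\mathcal{E}_X}$) is the intended one. Two points deserve attention, though.

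First, at the step where you invoke bijectivity of $D_j|_{\mathscr{E}_j\ominus\mathscr{E}_{j+1}}$: knowing that $\ker D_j=\mathscr{E}_{j+1}$ and that $D_j$ has closed range gives injectivity of the restriction, but surjectivity onto $\mathscr{E}_j\ominus\mathscr{E}_{j+1}$ requires $\mathrm{Range}(D_j)=(\ker D_j)^\perp$, i.e.\ self-adjointness of $D_j$ on $(\mathscr{E}_j,h_j)$. This is true — it can be checked directly from the self-adjointness of $D_X^{\mathcal{E}_X}$, $D_H$, $C$ and the defining relations of the witnesses, or read off from $D_j$ arising as a limit of the rescaled self-adjoint $D_{Z,T}^{\mathcal{E}}$ — but your write-up does not establish it, and without it the descending induction does not close: at stage $j$ you need $p_j\rho\in\mathscr{E}_j\ominus\mathscr{E}_{j+1}=\mathrm{Range}(D_j)$, and the second equality is exactly the self-adjointness input.

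Second, a small slip: the obstruction space $\bigoplus_{j=1}^{r-1}(\mathscr{E}_j\ominus\mathscr{E}_{j+1})=\mathscr{E}_1\ominus\mathscr{E}_r$ is not finite-dimensional, because $\mathscr{E}_1=C^\infty(Y_b,\mathcal{E}_Y\otimes\ker D_X^{\mathcal{E}_X})$ is an infinite-dimensional section space; only the summands with $j\geq 2$ are finite rank. You do treat $j=1$ separately via ellipticity of $D_1=D_Y^{\mathcal{E}_Y\otimes\ker D_X^{\mathcal{E}_X}}$, so the conclusion stands, but the ``finite-dimensional'' phrasing should be corrected.

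With these repairs your proposal is a faithful account of the Berthomieu--Bismut argument to which the paper appeals.
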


Let $\alpha_T$, $T\in[1,+\infty]$ be a family of tensors or differential operators. We denote by $a^{(i)}$ the derivative of $\alpha_T-\alpha_\infty$ of order $i$. If for any $p\in\mathbb{N}$, there exists $C>0$ such that when $T\geqslant 1$, $\sup\|a^{(i)}\|\leqslant C/T^k, i=0,1,\cdots,p$, we write
 $ \alpha_T = \alpha_\infty + \mathrm{O}\left(\frac{1}{T^k}\right).$

We take $c_1,c_2$ such that 
\begin{equation}\label{c1c2definition}
  \bigcup_{r\geqslant 2} \mathrm{Sp}(D^{2,>0}_r) \subset (c_1,c_2),\quad (0,2c_1)\bigcap\bigcup_{b\in S}\mathrm{Sp}({D_{Y_b}^{\mathcal{E}_Y\otimes\ker D_X^{\mathcal{E}_X}}}) = \emptyset.
\end{equation}
Set 
\begin{equation}
  U_0:=\left\{ \lambda\in\mathbb{C}: \frac{\sqrt{c_1}}{2} \leqslant |\lambda| \leqslant\sqrt{c_1}, \text{ or } \sqrt{c_1}\leqslant |\lambda| \leqslant 2\sqrt{c_2} \right\}.
\end{equation}

Comparing with \cite[Theorem 6.5]{berthomieu1994quillen}, by Assumption \ref{assumptions}, we have the following relation of resolvents of ${D_{Z,T}^{\mathcal{E}}}$ and $D_r$. The proof of this theorem is the 
same as that of \cite[Theorem 6.5]{berthomieu1994quillen}.
We write it here for the completeness.
\begin{theorem}\label{thmATZtoAr}
	Given $r\geqslant 2$, for $\lambda\in U_0$, $s\in\mathbb{E}_0^0$, there exists $C\in\mathbb{R}$, such that when $T\to+\infty$,
	\begin{equation}\label{ATZtoAr}
		\|(\lambda - T^{r-1}{D_{Z,T}^{\mathcal{E}}})^{-1}s - p_r(\lambda-D_r)^{-1}p_r s\|\leqslant \frac{C}{T}\|s\|. 
	\end{equation}
	\begin{proof}
		Set  
		\begin{equation}\label{MrT}
			\begin{aligned}
				M_{r,T} : \mathbb{E}_0^{r+1} & \to \mathbb{E}_0, \\
				(s_0,\cdots,s_r) & \mapsto s_0 + \frac{s_1}{T} + \cdots + \frac{s_r}{T^r}.
			\end{aligned}
		\end{equation}
		Define
		\begin{equation}
			N_{r,T} = M_{r,T}\circ \varphi_{r,\lambda},
		\end{equation}
		where $\varphi_{r,\lambda}$ is defined in (\ref{varphirlambda}). By (\ref{DXsr=s}), we get  
		\begin{multline}
				(\lambda - T^{r-1}{D_{Z,T}^{\mathcal{E}}}) N_{r,T} s  =(\lambda - T^rD_X^{\mathcal{E}_X} - T^{r-1}D_H - T^{r-2}C)\left(s_0+\frac{s_1}{T} + \cdots + \frac{s_r}{T^r}\right) \\
				= s + \lambda \left(\frac{s_1}{T}+\cdots+\frac{s_r}{T^r}\right) - \frac{1}{T}D_Hs_r - \frac{1}{T^2}Cs_r.
		\end{multline}
		Hence
		\begin{equation}
			(\lambda-T^{r-1}{D_{Z,T}^{\mathcal{E}}})^{-1}s = N_{r,T}s + (\lambda-T^{r-1}{D_{Z,T}^{\mathcal{E}}})^{-1}\left( -\lambda \left(\frac{s_1}{T}+\cdots+\frac{s_r}{T^r}\right) + \frac{1}{T}D_Hs_r + \frac{1}{T^2}Cs_r \right).
		\end{equation}
		Since $\|(\lambda - T^{r-1}{D_{Z,T}^{\mathcal{E}}})^{-1}\|$ is uniformly 
		bounded, by (\ref{s0=prs}), (\ref{MrT}), we obtain 
		(\ref{ATZtoAr}).
	\end{proof}
\end{theorem}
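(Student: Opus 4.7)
The plan is to prove the resolvent estimate by constructing an explicit approximate inverse of $\lambda - T^{r-1}D_{Z,T}^{\mathcal{E}}$ whose leading part is exactly $p_r(\lambda - D_r)^{-1}p_r$, and showing that the residual error is $O(1/T)$ in norm once the true resolvent is known to be uniformly bounded for $\lambda \in U_0$. The key ingredient is the family of lifting maps $\varphi_{r,\lambda}$ provided by (\ref{varphirlambda})--(\ref{DXsr=s}), which are designed precisely so that the formal expansion in $T^{-1}$ produces the required cancellations.

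Concretely, I would define the operator
\begin{equation*}
M_{r,T}(s_0,\dots,s_r) := s_0 + T^{-1}s_1 + \cdots + T^{-r}s_r,\quad N_{r,T} := M_{r,T}\circ \varphi_{r,\lambda},
\end{equation*}
and, invoking the decomposition $D_{Z,T}^{\mathcal{E}} = T D_X^{\mathcal{E}_X} + D_H + T^{-1}C$ implicit in (\ref{eq:3.26})--(\ref{eq:3.30}), compute $(\lambda - T^{r-1}D_{Z,T}^{\mathcal{E}})N_{r,T}s$ term by term. The defining equations (\ref{DXsr=s}) in $\varphi_{r,\lambda}$ are rigged so that all terms of order $T^0$ through $T^{-r+1}$ cancel telescopically, leaving only
\begin{equation*}
(\lambda - T^{r-1}D_{Z,T}^{\mathcal{E}})N_{r,T}s = s + \lambda\bigl(T^{-1}s_1 + \cdots + T^{-r}s_r\bigr) - T^{-1}D_H s_r - T^{-2}C s_r,
\end{equation*}
where the error is visibly $O(1/T)$ in the $L^2$-norm, because $\varphi_{r,\lambda}$ is a bounded map $\mathbb{E}_0^0 \to (\mathbb{E}_0^0)^{r+1}$ by the previous theorem, and $D_H$, $C$ are fixed differential operators acting on a fixed Sobolev space (their action on $s_r$ is controlled once one inspects the regularity statement in the lifting theorem). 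Applying $(\lambda - T^{r-1}D_{Z,T}^{\mathcal{E}})^{-1}$ on both sides gives
\begin{equation*}
N_{r,T}s = (\lambda - T^{r-1}D_{Z,T}^{\mathcal{E}})^{-1}s + (\lambda - T^{r-1}D_{Z,T}^{\mathcal{E}})^{-1}\cdot O(1/T),
\end{equation*}
while the leading $s_0$-component of $N_{r,T}s$ is exactly $(\lambda - D_r)^{-1}p_r s$ by (\ref{s0=prs}), and the subleading components $T^{-k}s_k$ ($k\geqslant 1$) are themselves $O(1/T)$. Rearranging yields the claimed bound.

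The main obstacle is the uniform boundedness of $(\lambda - T^{r-1}D_{Z,T}^{\mathcal{E}})^{-1}$ on $\mathbb{E}_0^0$ for $\lambda \in U_0$ as $T \to +\infty$. This is not automatic: the spectrum of $T^{r-1}D_{Z,T}^{\mathcal{E}}$ develops, as $T\to\infty$, both large eigenvalues coming from $D_X^{\mathcal{E}_X}$ (harmless because the factor $T^r$ pushes them far from $U_0$) and, more delicately, the \emph{asymptotically small} eigenvalues of $D_{Z,T}^{\mathcal{E}}$ that govern the spectral sequence $\{\mathscr{E}_r\}$. The choice (\ref{c1c2definition}) of $c_1, c_2$ and the resulting annular region $U_0$ is made precisely so that for each $r\geqslant 2$ the rescaled small eigenvalues $T^{r-1}\lambda$ stay away from $U_0$ once $T$ is large; verifying this requires the adiabatic spectral analysis already developed alongside the definition of $\mathscr{E}_r$ together with the generalized Bismut--Lebeau localization estimates underpinning Assumption \ref{assumptions}. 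Once uniform boundedness is in hand, the rest of the argument is the telescoping cancellation described above, mirroring the proof of \cite[Theorem 6.5]{berthomieu1994quillen}.
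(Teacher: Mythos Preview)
Your proposal is correct and follows essentially the same approach as the paper: both construct the approximate inverse $N_{r,T}=M_{r,T}\circ\varphi_{r,\lambda}$, perform the same telescoping computation of $(\lambda-T^{r-1}D_{Z,T}^{\mathcal{E}})N_{r,T}s$ via the relations (\ref{DXsr=s}), and conclude by invoking the uniform boundedness of $(\lambda-T^{r-1}D_{Z,T}^{\mathcal{E}})^{-1}$. Your discussion of why that uniform bound holds is more explicit than the paper's, which simply asserts it, but the underlying argument is identical.
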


\begin{corollary}\label{corkerDZEinfty}
There exist $C>0$, $T_0\geq 1$, such that for any 
	$T\geq T_0$, $s\in \mathscr{E}_0$,
	\begin{align}\label{eq:5.13}
	\| p^Ts-p_{\infty}s\|\leq \frac{C}{T}\|s\|.
	\end{align}
  Moreover, for $T\geq 1$, 
  \begin{equation}
    \ker {D_{Z,T}^{\mathcal{E}}}\simeq\mathscr{E}_\infty.
  \end{equation}
  \begin{proof}
    Set 
    \begin{equation}\label{eq:5.15}
      \begin{aligned}
        P_{r,T} & := \frac{1}{2\pi\sqrt{-1}}\int_{\{\lambda\in\mathbb{C}: |\lambda|<\sqrt{c_1}\}}(\lambda-T^{r-1}{D_{Z,T}^{\mathcal{E}}})^{-1}d \lambda \\
        & = \frac{1}{2\pi\sqrt{-1}}\int_{\{\lambda\in\mathbb{C}:|\lambda|<\frac{\sqrt{c_1}}{T^{r-1}}\}}(\lambda-{D_{Z,T}^{\mathcal{E}}})^{-1}d \lambda.
      \end{aligned}
    \end{equation}
    By Lemma \ref{lemEr=kerDr-1} and (\ref{c1c2definition}), 
    \begin{equation}
      P_r := \frac{1}{2\pi\sqrt{-1}}\int_{\{\lambda\in\mathbb{C}: |\lambda|<\sqrt{c_1}\}}p_r(\lambda-D_r)^{-1}p_rd \lambda = p_{r+1}.
    \end{equation}
    Recall that $r_0$ is the index from which $\mathscr{E}_r$ converges. Hence $P_{r_0-1} = P_{r_0}=\cdots = P_\infty$.

    By Theorem \ref{thmATZtoAr}, when $T\gg 1$,
    \begin{equation}\label{PTn-Pnestimate}
      \|P_{r_0-1,T}s - P_{r_0-1}s\| \leqslant\frac{C}{T}\|s\|.
    \end{equation}
    Note that when $n>r_0-1$, $P_{n,T} = P_{r_0-1,T}$. As $n\to+\infty$, by (\ref{eq:5.15}), $P_{r_0-1,T} = p^T$. 
    So we get (\ref{eq:5.13}).
    
    By Assumption \ref{assumptions}, $\dim\ker {D_{Z,T}^{\mathcal{E}}}$ is independent of $T$.
    According to (\ref{PTn-Pnestimate}), $\dim\mathrm{im}P_{r_0} = \dim\ker {D_{Z,T}^{\mathcal{E}}}$. So $\dim\ker {D_{Z,\infty}^{\mathcal{E}}} = \dim\mathscr{E}_\infty$.
    Under Assumption \ref{assumptions},  $\ker {D_{Z,T}^{\mathcal{E}}}\cong\ker {D_{Z}^{\mathcal{E}}}$ are isomorphic vector bundles. Hence $\ker {D_{Z}^{\mathcal{E}}}\cong\mathscr{E}_\infty$.
    
    The proof of Corollary \ref{corkerDZEinfty} is completed.
  \end{proof}
\end{corollary}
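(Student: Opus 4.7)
The natural approach is to realize both projections $p^T$ and $p_\infty$ as contour integrals of resolvents, and then transfer the resolvent comparison of Theorem \ref{thmATZtoAr} into a norm estimate on these projections. Concretely, the plan is to choose a small circle $\Gamma_c := \{|\lambda| < \sqrt{c_1}\}$ lying inside the spectral gap of $D_r$ (guaranteed by the choice of $c_1$ in (\ref{c1c2definition})), and to form
\begin{equation*}
P_{r,T} := \frac{1}{2\pi\sqrt{-1}} \int_{\Gamma_c} (\lambda - T^{r-1} D_{Z,T}^{\mathcal{E}})^{-1}\, d\lambda,
\qquad
P_r := \frac{1}{2\pi\sqrt{-1}} \int_{\Gamma_c} p_r (\lambda - D_r)^{-1} p_r\, d\lambda.
\end{equation*}
A change of variable $\lambda \mapsto T^{r-1}\lambda$ rewrites $P_{r,T}$ as a resolvent integral for $D_{Z,T}^{\mathcal{E}}$ itself over the shrinking disk of radius $\sqrt{c_1}/T^{r-1}$, and the spectral-gap information for $D_{Z,T}^{\mathcal{E}}$ will then identify $P_{r,T}$ with the projection onto $\ker D_{Z,T}^{\mathcal{E}}$ once $T$ is large and $r$ is large enough. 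On the finite-dimensional side, Lemma \ref{lemEr=kerDr-1} together with the choice of $c_1$ forces $P_r = p_{r+1}$.

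Next I would insert the resolvent comparison from Theorem \ref{thmATZtoAr}, namely
\[
\|(\lambda - T^{r-1}D_{Z,T}^{\mathcal{E}})^{-1}s - p_r(\lambda - D_r)^{-1}p_r s\| \leq \frac{C}{T}\|s\|,
\]
uniformly for $\lambda \in \Gamma_c$. Integrating over the fixed contour yields $\|P_{r,T} s - P_r s\| \leq (C'/T)\|s\|$. Since the sequence $\mathscr{E}_r$ stabilizes, there is an index $r_0$ with $p_{r_0} = p_\infty$ and hence $P_r = p_\infty$ for all $r \geq r_0 - 1$. Likewise, once $r$ is chosen large enough so that $T^{r-1}$ swallows all non-zero eigenvalues of $D_{Z,T}^{\mathcal{E}}$ (for $T$ past some $T_0$), $P_{r,T}$ coincides with $p^T$. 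Plugging these two identifications into the estimate gives the desired bound $\|p^T s - p_\infty s\| \leq (C/T)\|s\|$.

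For the isomorphism $\ker D_{Z,T}^{\mathcal{E}} \simeq \mathscr{E}_\infty$, the logic is a continuity/rank argument: Assumption \ref{assumptions} tells us that $\dim \ker D_{Z,T}^{\mathcal{E}}$ is locally constant in $T$, and the operator-norm convergence $P_{r,T} \to p_\infty$ (through the same norm estimate) then forces $\dim \mathrm{im}\, p_\infty = \dim \ker D_{Z,T}^{\mathcal{E}}$, so that $\mathscr{E}_\infty = \mathrm{im}\, p_\infty$ and $\ker D_{Z,T}^{\mathcal{E}}$ have the same rank; combined with the norm closeness of the projections for large $T$, this promotes the equality of dimensions to an isomorphism of $G$-equivariant vector bundles.

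The main technical obstacle is justifying that the resolvent bound of Theorem \ref{thmATZtoAr} is uniform in $\lambda$ along the full contour $\Gamma_c$ and compatible with the contour integration (in particular, controlling how the small-eigenvalue part of $D_{Z,T}^{\mathcal{E}}$ degenerates as $T \to \infty$). Once that is in place, the rest is a straightforward spectral-projection calculation; the subtlety lies in making sure that the same $r_0$ works simultaneously for identifying $P_{r,T}$ with $p^T$ and $P_r$ with $p_\infty$, which is exactly why Assumption \ref{assumptions} and the stabilization of the $\mathscr{E}_r$ are invoked.
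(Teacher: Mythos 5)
Your proposal follows essentially the same route as the paper: you realize $p^T$ and $p_\infty$ as contour integrals of resolvents over the disk $\{|\lambda|<\sqrt{c_1}\}$, use the stabilization index $r_0$ to identify $P_r = p_{r+1} = p_\infty$ for $r\geq r_0-1$ and $P_{r,T} = p^T$ for large $r,T$ via the rescaled contour $\{|\lambda|<\sqrt{c_1}/T^{r-1}\}$, and then transfer the resolvent estimate of Theorem \ref{thmATZtoAr} through the contour integral, finishing the bundle isomorphism with the rank-stability argument from Assumption \ref{assumptions}. This matches the paper's proof step by step; the uniformity-in-$\lambda$ you flag as the main technical point is already built into the paper's Theorem \ref{thmATZtoAr} (via the uniform boundedness of $(\lambda-T^{r-1}D_{Z,T}^{\mathcal{E}})^{-1}$ on $U_0$), so no genuine gap remains.
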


Theorem \ref{thmATZtoAr} is essential for our analysis. It tells us that all eigenvalues of ${D_{Z,T}^{\mathcal{E}}}$ which satisfy $\mathrm{O}\left(1/T^{r-1}\right)$ obey $\frac{1}{T^{r-1}}\left(\mathrm{Sp}(D_r) + \mathrm{O}(1/T)\right)$. We depict the contours $\delta_0, \Delta_0$ in Figure \ref{contour2}:
\begin{figure}[!htp]\centering\begin{tikzpicture}[>=stealth,thick]
  \draw[->] (-1. 2,0) --  (4. 5,0) node[right] {$x$};
  \draw[->] (0,-1. 5) -- (0,1. 5) node[above right] {$y$};

  \draw[->] (1. 5,1) -- (1. 5,0. 3) node[right] {\footnotesize $c_1$};
  \draw (1. 5,0. 3) -- (1. 5,-1); 
  \draw[->] (1. 5,-1) -- (2,-1);
  \draw (2,-1) -- (4,-1); 
  \draw[->] (4,-1) -- (4,0. 3) node[right] {\footnotesize $c_2$};
  \draw (4,0. 3) -- (4,1); 
  \draw[->] (4,1) -- (2,1) node[right=10pt,fill=white] {$\Delta_0$};
  \draw (2,1) -- (1. 5,1); 

  \draw[dashed] (1. 5,1) -- (0,1) node[left] {$1$};
  \draw[dashed] (1. 5,-1) -- (0,-1) node[left] {$-1$};

\node at (0. 3,0. 3) {$\mathcal{U}_1$};
\node at (3. 3,0. 3) {$\mathcal{U}_2$};

\draw[->] (0. 75,0) arc (0:180:0. 75) node[below left] {\footnotesize $\frac{c_1}{4}$};
\draw (-0. 75,0) arc (180:360:0. 75);
\node[fill=white] at (0. 5,-0. 5) {\footnotesize $\delta_0$};
\end{tikzpicture} 
\caption{Contours $\delta_0,\Delta_0$}\label{contour2}
\end{figure}
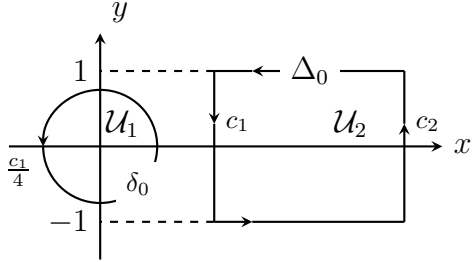

By (\ref{c1c2definition}), we know that when $T\gg 1$, 
\begin{equation}\label{spectradecomposition}
  \mathrm{Sp}({D_{Z,T}^{\mathcal{E},2}}) \bigcap [0,2c_1] \subset \frac{\mathcal{U}_1}{T^{2(r_0-1)}}\bigcup\bigcup_{r=2}^{r_0} \frac{\mathcal{U}_2}{T^{2(r-1)}}.
\end{equation}

Comparing with \cite[Proposition 6.12]{bunke2004index}, we show that Theorem \ref{thmmaintheorem} really extends Dai's adiabatic limit formula to the family case. 
\begin{proposition}\label{prop:5.04}
  If $S$ is a point, set 
  \begin{equation}
    A_r := \left\{ \lambda\in\mathrm{Sp}({D_{Z,T}^{\mathcal{E}}}):\lambda=\mathrm{O}\left(\frac{1}{T^{r-1}}\right) \right\}.
  \end{equation}
 Then when $T\to+\infty$,
  \begin{equation}
    \widetilde{\eta}_e(\mathscr{E}_r,\mathscr{E}_{r+1},\nabla^r,\nabla^{r+1}) = \sum_{\lambda\in A_r/A_{r+1}}\mathrm{sgn}(\lambda).
  \end{equation}

\end{proposition}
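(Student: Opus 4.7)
The plan is to evaluate the left-hand side directly as a finite-dimensional Mellin integral and identify it with the right-hand side through the resolvent correspondence of Theorem~\ref{thmATZtoAr}. Since $S$ is a point, the connection $\nabla^r$ vanishes, and Definition~\ref{defnfiniteetafrom} reduces to
\begin{equation*}
\widetilde{\eta}_e(\mathscr{E}_r,\mathscr{E}_{r+1},0,0) = \frac{1}{2\sqrt{\pi}}\int_0^{+\infty} u^{-1/2}\,\mathrm{Tr}_s\!\left[D_r e^{-u D_r^2}\right] du,
\end{equation*}
because $L_u = \sqrt{u}\,D_r$, $L_u^2 = u D_r^2$, $\partial L_u/\partial u = D_r/(2\sqrt{u})$, and $\widetilde{\psi}_{pt} = 1/\sqrt{\pi}$ on the zero-form part. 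Using the elementary identity $\int_0^{+\infty} u^{-1/2}\lambda e^{-u\lambda^2}du = \sqrt{\pi}\,\mathrm{sgn}(\lambda)$ for each nonzero $\lambda\in\mathrm{Sp}(D_r)$, together with the vanishing contribution of $\ker D_r = \mathscr{E}_{r+1}$, I reduce $\widetilde{\eta}_e$ to a finite signed sum over the nonzero spectrum of $D_r$, weighted by the $\mathbb{Z}_2$-grading signs carried by the supertrace.

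For the right-hand side, I invoke Theorem~\ref{thmATZtoAr} together with the spectral decomposition~(\ref{spectradecomposition}). For $T$ sufficiently large, the resolvent estimate pairs each nonzero $\lambda\in\mathrm{Sp}(D_r)$ with a unique eigenvalue $\mu_T\in\mathrm{Sp}(D_{Z,T}^{\mathcal{E}})$ of the form $\mu_T = \lambda/T^{r-1} + \mathrm{O}(1/T^r)$, so that $\mu_T\in A_r\setminus A_{r+1}$ and $\mathrm{sgn}(\mu_T) = \mathrm{sgn}(\lambda)$ since the leading term dominates. Dually, $\ker D_r = \mathscr{E}_{r+1}$ matches the eigenvalues of $D_{Z,T}^{\mathcal{E}}$ of strictly smaller order, namely those in $A_{r+1}$. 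Thus $\sum_{\mu\in A_r\setminus A_{r+1}}\mathrm{sgn}(\mu)$ coincides with the (ungraded) sum of signs of nonzero eigenvalues of $D_r$, once $T$ is large enough for the bijection to stabilize.

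The main obstacle is reconciling the graded sum on the left with the ungraded sum on the right. The spectral bijection from Theorem~\ref{thmATZtoAr} is compatible with the $\mathbb{Z}_2$-grading that $\mathscr{E}_r$ inherits from $\mathcal{E}$, so one must track how the supertrace weights convert to simple cardinality counts under the Clifford-module decomposition $\mathcal{E} = \mathcal{E}_+\oplus\mathcal{E}_-$ and the attendant normalization constants in $\widetilde{\psi}_{pt}$. In the relevant odd-dimensional setting this matches cleanly, in parallel with the signature-operator argument of \cite[Proposition 6.12]{bunke2004index}, yielding the claimed equality.
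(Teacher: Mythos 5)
Your argument is correct and follows the same route as the paper's: reduce the finite-dimensional eta form over a point to the Mellin integral $\frac{1}{2\sqrt{\pi}}\int_0^\infty u^{-1/2}\mathrm{Tr}_s[D_r e^{-uD_r^2}]\,du$, evaluate it via the Gaussian identity to obtain a sum of $\mathrm{sgn}(\lambda)$ over $\mathrm{Sp}(D_r)$, and then transfer this to $A_r\setminus A_{r+1}$ via the resolvent convergence of Theorem~\ref{thmATZtoAr} and the spectral separation~(\ref{spectradecomposition}). The paper's proof is exactly this, stated in three lines; your extra paragraph on the $\mathbb{Z}_2$-grading and $\sigma$-normalization is a reasonable sanity check that the paper leaves implicit, but it does not change the argument.
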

  \begin{proof}
	By Definition \ref{defnfiniteetafrom}, when $S$ is a point, 
	\begin{equation}
	\widetilde{\eta}_e(\mathscr{E}_r,\mathscr{E}_{r+1},\nabla^r,\nabla^{r+1}) = \int_0^{+\infty}\widetilde{\psi}_{\{\mathrm{pt}\}}\mathrm{Tr}_s\left[\frac{D_r}{2\sqrt{t}}\exp(-tD^2_r)\right]d t.
	\end{equation}
	By Gauss integral, as 
	\begin{equation*}
	\frac{1}{\sqrt{\pi}} \int_0^{+\infty}\lambda\mathrm{e}^{-t\lambda^2}\frac{d t}{2\sqrt{t}}=\mathrm{sgn}(\lambda),
	\end{equation*}
	we have 
	\begin{equation*}
	\widetilde{\eta}_e(\mathscr{E}_r,\mathscr{E}_{r+1},\nabla^r,\nabla^{r+1}) = \sum_{\lambda\in\mathrm{Sp}(D_r)}\mathrm{sgn}(\lambda).
	\end{equation*}
	According to Theorem \ref{thmATZtoAr}, we see that $\widetilde{\eta}_e(\mathscr{E}_r,\mathscr{E}_{r+1},\nabla^r,\nabla^{r+1}) = \sum_{\lambda\in A_r/A_{r+1}}\mathrm{sgn}(\lambda)$.
\end{proof}

\subsection{Proof of Theorem \ref{thmintermediate2}}
We start from the definitions of $\gamma^u(T,u),\gamma^T(T,u)$ and $\gamma_r(u)$. Set 
\begin{equation}\label{calBs}
	\begin{aligned}
		\mathcal{B}_T & := B_T^2 + d  u\wedge\delta_{u^2}^{-1}\frac{\partial B_{u^2,T}}{\partial u}\delta_{u^2}, \\
		\mathcal{B}_{u,T} & := B^2_{u^2,T} + d  u\wedge \frac{\partial B_{u^2,T}}{\partial u}, \\
		\mathcal{B}_r & := B_r^2 + d  u\wedge\delta_{u^2}^{-1}\frac{\partial B_{r,u^2}}{\partial u}\delta_{u^2}\quad r\geqslant 1,
	\end{aligned}
\end{equation}
where $B_{r,u^2}=u\, \delta_{u^2}\circ B_r\circ \delta_{u^2}^{-1}$.
Then
\begin{equation}\label{gammacalBs}
	\begin{aligned}
		\gamma^u(T,u) & = \left\{ \psi_S\widetilde{\mathrm{Tr}}[g\exp(-\mathcal{B}_{u^2,T})] \right\}^{d  u} = \left\{ u^{-2}\psi_S\delta_{u^2}\widetilde{\mathrm{Tr}}[g\exp(-u^2\mathcal{B}_T)] \right\}^{d  u}, \\
		\gamma_r(u) & = \left\{ u^{-2}\psi_S\delta_{u^2}\widetilde{\mathrm{Tr}}[g\exp(-u^2\mathcal{B}_r)] \right\}^{d  u},\quad r\geqslant 1. 
	\end{aligned}
\end{equation}
Set
\begin{equation}\label{BruT}
	\mathcal{B}_{r,u,T}:=\mathcal{B}_{T^{r-1}u,T}. 
\end{equation}

The proof of the following theorem is the same as \cite[Theorem 9.2]{bismut1997holomorphic} (see also \cite[Lemma 5.8]{liu2017functoriality}).
\begin{theorem}\label{spectrumofB}
	For $u>0$, $T\geq 1$, we have
	\begin{equation}
		\begin{aligned}
			\mathrm{Sp}(\mathcal{B}_{u,T}) & = \mathrm{Sp}(u^2\mathcal{B}_{1,T})= \mathrm{Sp}(u^2{D_{Z,T}^{\mathcal{E},2}}),\quad \mathrm{Sp}(\mathcal{B}_{r,u,T}) =  \mathrm{Sp}(T^{2(r-1)}u^2{D_{Z,T}^{\mathcal{E},2}}), \\
			\mathrm{Sp}(\mathcal{B}_1) & = \mathrm{Sp}({D_Y^{\mathcal{E}_Y\otimes D_X^{\mathcal{E}_X},2}}), \quad
			\mathrm{Sp}(\mathcal{B}_r)  = \mathrm{Sp}(D^2_r),\quad r\geqslant 2. 
		\end{aligned}
	\end{equation}
%
\end{theorem}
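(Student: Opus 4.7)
The core idea is that, on the filtered space $\Lambda(T^*S)\,\widehat\otimes\,\mathbb{C}[du]/(du)^2\,\widehat\otimes\,\mathscr{E}$, each of the operators $\mathcal{B}_T$, $\mathcal{B}_{u,T}$, $\mathcal{B}_{r,u,T}$ and $\mathcal{B}_r$ decomposes as a \emph{scalar} Dirac square (of form degree $0$ on $S$ and not containing $du\wedge$) plus a term of strictly positive total exterior degree. Since strictly positive exterior degrees form a nilpotent ideal, the full spectrum coincides with the spectrum of the scalar part. This is essentially the mechanism of \cite[Theorem 9.2]{bismut1997holomorphic} and \cite[Lemma 5.8]{liu2017functoriality}, adapted to the extra $du$-variable.

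The plan is as follows. First I would isolate the abstract lemma: if $A$ acts on $\Lambda(T^*S)\,\widehat\otimes\,\mathbb{C}[du]/(du)^2\,\widehat\otimes\,\mathscr{E}$ preserving the total exterior degree and if $R$ strictly increases it, then
\begin{equation*}
(A+R-\lambda)^{-1}=\sum_{k=0}^{N}\bigl(-(A-\lambda)^{-1}R\bigr)^k(A-\lambda)^{-1}
\end{equation*}
for $N\geq \dim S+1$, the sum being finite by nilpotency of $R$. Hence $\mathrm{Sp}(A+R)=\mathrm{Sp}(A)$, where spectrum is understood in the usual sense on the scalar factor $\mathscr{E}$. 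Second, I would apply this lemma to each operator in (\ref{calBs}): the scalar part of $B_{u^2,T}^2$ is $u^2 D_{Z,T}^{\mathcal{E},2}$ because the scalar part of $B_{u^2,T}$ is $uD_{Z,T}^{\mathcal{E}}$ by (\ref{eq:3.29}); the additional $du\wedge\partial_u B_{u^2,T}$ lies in the nilpotent ideal (it contains a $du$ factor). This gives $\mathrm{Sp}(\mathcal{B}_{u,T})=\mathrm{Sp}(u^2 D_{Z,T}^{\mathcal{E},2})$.

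Third, to identify $\mathrm{Sp}(\mathcal{B}_{u,T})$ with $\mathrm{Sp}(u^2\mathcal{B}_{1,T})$, I use that $\delta_{u^2}$ is an invertible endomorphism of $\Lambda(T^*S)\,\widehat\otimes\,\mathscr{E}$ (with inverse $\delta_{u^{-2}}$) so conjugation by $\delta_{u^2}$ preserves the spectrum; combined with the rescaling identity $B_{u^2,T}=u\,\delta_{u^2}\circ B_T\circ\delta_{u^2}^{-1}$, this reduces the comparison to comparing scalar parts, which agree by the previous step. The case $\mathcal{B}_{r,u,T}=\mathcal{B}_{T^{r-1}u,T}$ from (\ref{BruT}) is the same computation with $u$ replaced by $T^{r-1}u$, giving the factor $T^{2(r-1)}u^2$.

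Finally, for $\mathcal{B}_r$ with $r\geq 1$: the superconnection $B_r=\nabla^r+D_r$ (and the analogous expression for $r=1$ using the Bismut superconnection of $\underline{\pi_Y}$ twisted by $\ker D_X^{\mathcal{E}_X}$) squares to $D_r^2+[\nabla^r,D_r]+(\nabla^r)^2$, whose last two summands live in $\Lambda^{\geq 1}(T^*S)\,\widehat\otimes\,\mathrm{End}(\mathscr{E}_r)$; adding the $du$-piece in (\ref{calBs}) keeps the scalar part equal to $D_r^2$ (respectively $D_Y^{\mathcal{E}_Y\otimes\ker D_X^{\mathcal{E}_X},2}$ for $r=1$). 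The abstract lemma then yields the remaining identities. The only delicate point — and what I would expect to be the main bookkeeping obstacle — is verifying carefully that the $u$-derivative $\partial_u B_{u^2,T}$, after the conjugation $\delta_{u^2}^{-1}(\cdot)\delta_{u^2}$ appearing in (\ref{calBs}), remains in the nilpotent ideal; this requires explicit inspection of each summand of (\ref{eq:3.29}) and checking that every $u$-dependent coefficient which could contribute a scalar piece is accompanied by at least one $g^\alpha\wedge$ or $du\wedge$ factor, so that its image is of strictly positive exterior degree on $S\times\mathbb{R}_u$.
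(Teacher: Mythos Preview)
Your proposal is correct and follows the same route as the paper, which simply cites \cite[Theorem 9.2]{bismut1997holomorphic} and \cite[Lemma 5.8]{liu2017functoriality}: namely, writing each operator as a degree-zero scalar Dirac square plus a term that raises the total exterior degree in $\Lambda(T^*S)\,\widehat\otimes\,\mathbb{C}[du]/(du)^2$, and concluding by the finite Neumann series for $(A+R-\lambda)^{-1}$. One remark: the ``delicate point'' you flag at the end is not in fact delicate. In (\ref{calBs}) the $u$-derivative term is already wedged with $du$, so regardless of what $\delta_{u^2}^{-1}\,\partial_u B_{u^2,T}\,\delta_{u^2}$ looks like, the whole summand $du\wedge(\cdots)$ lies in the nilpotent ideal; no term-by-term inspection of (\ref{eq:3.29}) is needed. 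Likewise, your conjugation argument for $\mathrm{Sp}(\mathcal{B}_{u,T})=\mathrm{Sp}(u^2\mathcal{B}_{1,T})$ is slightly off --- the $du$-parts of $\mathcal{B}_{u,T}$ and $u^2\,\delta_{u^2}\mathcal{B}_T\delta_{u^2}^{-1}$ differ by a factor $u^2$ --- but this is harmless for exactly the same reason: both nilpotent perturbations are discarded and the scalar parts coincide.
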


Set 
\begin{equation}\label{FrTGrT}
	\begin{aligned}
		F_{r,u,T} & := u^{-2}\psi_S\delta_{u^2}\widetilde{\mathrm{Tr}}\left[ \int_{\Delta_0} \mathrm{e}^{-u^2\lambda}(\lambda-\mathcal{B}_{r,1,T})^{-1}d \lambda \right].\quad r\geq 2; \\
		F_{r,u,\infty} & := u^{-2}\psi_S\delta_{u^2}\widetilde{\mathrm{Tr}}\left[ \int_{\Delta_0} \mathrm{e}^{-u^2\lambda}(\lambda-\mathcal{B}_r)^{-1}d \lambda \right], \quad r\geq 2; \\
		G_{r,u,T} & := u^{-2}\psi_S\delta_{u^2}\widetilde{\mathrm{Tr}}\left[ \int_{\delta_0} \mathrm{e}^{-u^2\lambda}(\lambda-\mathcal{B}_{r,1,T})^{-1}d \lambda \right],\quad r\geq 1;\\
		G_{r,u,\infty} & := u^{-2}\psi_S\delta_{u^2}\widetilde{\mathrm{Tr}}\left[ \int_{\delta_0} \mathrm{e}^{-u^2\lambda}(\lambda-\mathcal{B}_r)^{-1}d \lambda \right],\quad r\geq 1. 
	\end{aligned}
\end{equation}
Note that $\mathcal{B}_{1,1,T}=\mathcal{B}_{1,T}=\mathcal{B}_T$.
Set
\begin{align}
F_{1,u,T}=\psi_S\widetilde{\mathrm{Tr}}[g\exp(-\mathcal{B}_{u,T})]-G_{1,u,T},\quad F_{1,u,\infty}=\psi_S\widetilde{\mathrm{Tr}}[g\exp(-\mathcal{B}_{1,u^2})]-G_{1,u,\infty}.
\end{align}
By  (\ref{gammacalBs}), for $T\gg 1$, 
\begin{equation}\label{F+G=gamma}
	\begin{aligned}
	\gamma^u(T,u) & = 	\{F_{1,u,T}\}^{d  u} + \{G_{1,u,T}\}^{d  u} \\
	\gamma_r(u) & = \left\{ F_{r,u,\infty} \right\}^{d  u}+ \left\{ G_{r,u,\infty} \right\}^{d  u},\quad  r\geqslant 2. 
	\end{aligned}
\end{equation}
When $T\gg 1$, we have
\begin{equation}\label{G1uTdecomposition}
	\begin{aligned}
		G_{1,u,T} = & \sum_{r=2}^{r_0} \psi_S\delta_{u^2}\widetilde{\mathrm{Tr}}\left[ \int_{\frac{\Delta_0}{T^{2(r-1)}}} \mathrm{e}^{-u^2\lambda}(\lambda-\mathcal{B}_{1,T})d \lambda \right] \\
		& +  \psi_S\delta_{u^2}\widetilde{\mathrm{Tr}}\left[ \int_{\frac{\delta_0}{T^{2(r_0-1)}}} \mathrm{e}^{-u^2\lambda}(\lambda-\mathcal{B}_{1,T})d \lambda \right] \\
		= & \sum_{r=2}^{r_0} F_{r,T^{1-r}u,T} + G_{r_0,T^{1-r_0}u,T}. 
	\end{aligned}
\end{equation}

		The proof of the following lemma is same with that in \cite[(2.98) and (2.105)]{ma2002functoriality} and \cite[\S 2.e]{ma2000formes}. Note that in our situation, the Dirac operator cannot be decomposed into the sum of two nilpotent operators. But the term $p_{r,T}$ in \cite[\S 2.e]{ma2000formes} is the same as $p_r$ in our case, which is independent of $T$. This is also the case in \cite{ma2002functoriality}. Hence we can prove this lemma for general Dirac operators.
\begin{lemma}\label{FruTFruinftyapproximation}
	(1) There exist $\delta,c,C,T_0>0$ such that for any $u\geqslant 1,T\geqslant T_0,r\geqslant 1$,
	\begin{equation}\label{eq:5.30}
		\left|F_{r,u,T}-F_{r,u,\infty}\right|\leqslant\frac{C}{T^\delta}\mathrm{e}^{-cu},\quad \left|G_{r,u,T}-G_{r,u,\infty}\right|\leqslant\frac{C}{T^\delta}.
	\end{equation}
	
	(2) There exist $C,\delta>0$, such that for any $u\in\mathbb{C}, |u|\leqslant 1, T\geqslant T_0$, 
	\begin{equation}\label{eq:5.40}
	|F_{r,u,T} - F_{r,u,\infty} |\leqslant \frac{C}{T^\delta},\quad |G_{r,u,T} - G_{r,u,\infty} |\leqslant \frac{C}{T^\delta}.
	\end{equation}
%
\end{lemma}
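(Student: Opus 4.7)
The plan is to reduce both estimates to uniform resolvent comparisons along the contours $\Delta_0$ and $\delta_0$, exploiting the contour-integral representations in (\ref{FrTGrT}) of $F_{r,u,T}, G_{r,u,T}$ and of their $\infty$-counterparts. Concretely, once we have a bound of the form
\begin{equation*}
\left\|(\lambda-\mathcal{B}_{r,1,T})^{-1}s - p_r(\lambda-\mathcal{B}_r)^{-1}p_r s\right\| \leq \frac{C}{T^\delta}\|s\|_\mu,
\end{equation*}
uniformly for $\lambda\in\Delta_0\cup\delta_0$, the two estimates will follow immediately, with the distinction between $F$ and $G$ coming only from the position of the contour relative to the origin.

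First, I would upgrade Theorem \ref{thmATZtoAr} from the scalar Dirac operator $T^{r-1}D_{Z,T}^{\mathcal{E}}$ to the full superconnection-squared $\mathcal{B}_{r,1,T}$. Starting from the decomposition (\ref{eq:3.29}) and (\ref{calBs}), one treats the differential-form components of $B_{u^2,T}$ and the $du$-correction in $\mathcal{B}_{r,1,T}$ as nilpotent perturbations of the principal fiberwise Dirac piece. Iterating a Duhamel expansion in parallel with the construction $\varphi_{r,\lambda}$ from (\ref{varphirlambda})-(\ref{DXsr=s}) and the operator $M_{r,T}$ of (\ref{MrT}) produces the claimed resolvent expansion. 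This is exactly the strategy of \cite[\S 2.e]{ma2000formes} and \cite[(2.98), (2.105)]{ma2002functoriality}; the crucial structural feature used there — that the projections $p_r$ are $T$-independent — holds verbatim in our setting, so the argument transplants with only bookkeeping changes replacing Dolbeault operators by the triple $(D_X^{\mathcal{E}_X}, D_H, C)$.

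Given the resolvent expansion, part (1) splits according to the two contours. On $\Delta_0$ we have $\mathrm{Re}\,\lambda \geq c_1 > 0$, so for $u\geq 1$ the factor $\mathrm{e}^{-u^2\lambda}$ contributes uniform exponential decay $\mathrm{e}^{-c_1 u^2}\leq C\mathrm{e}^{-cu}$. Combined with the $T^{-\delta}$ resolvent estimate and the standard off-diagonal control of $u^{-2}\psi_S\delta_{u^2}\widetilde{\mathrm{Tr}}[g\,\cdot\,]$ for $u$ bounded below (the negative powers of $u$ from $\delta_{u^2}$ are absorbed into the exponential), this yields the first bound in (\ref{eq:5.30}). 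On $\delta_0$, which is a small circle around $0$, only $|\mathrm{e}^{-u^2\lambda}|\leq C$ is available, giving the polynomial estimate for $|G_{r,u,T}-G_{r,u,\infty}|$. For part (2), $|u|\leq 1$ keeps $|\mathrm{e}^{-u^2\lambda}|$ bounded on both contours, and the assertion is that the $u^{-2}\delta_{u^2}$ prefactor does not blow up as $u\to 0$; this is exactly Getzler's rescaling, where the negative powers of $u$ are compensated by positive powers coming from the Clifford supertrace.

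The main obstacle is the superconnection-level resolvent estimate, because $\mathcal{B}_{r,1,T}$ is not self-adjoint and mixes fiberwise Dirac data with differential forms on $S$. But the abstract scheme — repeatedly solving the equations $D_X^{\mathcal{E}_X}s_k = -D_H s_{k-1} - C s_{k-2}$ as in (\ref{DXsr=s}) to produce an approximate resolvent, then correcting by Neumann series — is exactly what is carried out in \cite{ma2000formes, ma2002functoriality}. Once the uniformity in $\lambda$ across $\Delta_0\cup\delta_0$ and the dependence on Sobolev order $\mu$ are tracked, both parts of the lemma follow by integrating the estimate against the respective weight $\mathrm{e}^{-u^2\lambda}\,d\lambda$.
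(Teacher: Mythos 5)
Your high-level plan matches the paper's: reduce to a resolvent comparison in the spirit of the lift you propose from Theorem \ref{thmATZtoAr} to the superconnection $\mathcal{B}_{r,1,T}$, and then quote the bookkeeping of \cite[\S 2.e]{ma2000formes} and \cite[(2.98), (2.105)]{ma2002functoriality}, the essential transferable feature being precisely the one you identify, namely that the projections $p_r$ do not depend on $T$. The paper itself does no more than this (it also remarks that the failure of the nilpotent decomposition available in the Dolbeault and flat cases is immaterial, which you don't flag, though your reformulation in terms of the triple $(D_X^{\mathcal{E}_X},D_H,C)$ addresses the same point).

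However, one concrete step in your sketch is wrong. For the $G$ estimate in part (1) you write that on $\delta_0$ ``only $|\mathrm{e}^{-u^2\lambda}|\le C$ is available,'' and then close the estimate by integrating along the contour. But $\delta_0$ is the circle $|\lambda|=c_1/4$ centered at the origin; on its left half $\mathrm{Re}\,\lambda<0$, so $|\mathrm{e}^{-u^2\lambda}|=\mathrm{e}^{-u^2\mathrm{Re}\,\lambda}$ grows like $\mathrm{e}^{c_1u^2/4}$ as $u\to+\infty$, and no bound uniform in $u\ge 1$ can be extracted by estimating the integrand pointwise on $\delta_0$. The correct route — and what Ma's argument actually uses — is to evaluate $\int_{\delta_0}\mathrm{e}^{-u^2\lambda}(\lambda-\mathcal{B}_{r,1,T})^{-1}d\lambda$ by residues: it equals ($2\pi i$ times) the finite-rank spectral block of the heat operator corresponding to eigenvalues of $\mathcal{B}_{r,1,T}$ inside $\delta_0$, which by Theorem \ref{spectrumofB} and (\ref{spectradecomposition}) are nonnegative real and tend to $0$ as $T\to\infty$. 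On that block $\mathrm{e}^{-u^2\lambda_k}\le 1$ uniformly in $u\ge 1$, and the $T^{-\delta}$-convergence of the block (from the upgraded Theorem \ref{thmATZtoAr} applied to the spectral projectors, as in Corollary \ref{corkerDZEinfty}) then yields the claimed bound. Your argument for $F$ on $\Delta_0$ is fine because there $\mathrm{Re}\,\lambda\ge c_1>0$. For part (2) the invocation of ``Getzler's rescaling'' is the wrong mechanism: Getzler's rescaling is a small-time local index computation, whereas what is needed here is that for $|u|\le 1$ both $\mathrm{e}^{-u^2\lambda}$ and the resolvent comparison are controlled on the bounded contours $\Delta_0,\delta_0$, together with an analysis of the negative-power coefficients in the small-$u$ asymptotic expansion (they do not disappear, but the paper tracks them explicitly in Lemma \ref{lemma:5.7} via (\ref{frTgrT}) and the Cauchy formula). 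You should replace the contour-pointwise bound for $G$ by the residue/finite-rank argument before the proposal can be considered complete.
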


Set
\begin{equation}\label{frTgrT}
	\begin{aligned}
		f_{r,u,T} & = \{F_{r,su,T}\}^{d  s}|_{s=1}=u\{F_{r,u,T}\}^{d  u}, \\
		f_{r,u,\infty} & = \{F_{r,su,\infty}\}^{d  s}|_{s=1}=u\{F_{r,u,\infty}\}^{d  u}, \\
		g_{r,u,T} & = \{G_{r,su,T}\}^{d  s}|_{s=1}=u\{G_{r,u,T}\}^{d  u}, \\
		g_{r,u,\infty} & = \{G_{r,su,\infty}\}^{d  s}|_{s=1}=u\{G_{r,u,\infty}\}^{d  u}. 
	\end{aligned}
\end{equation}
By (\ref{F+G=gamma}) and (\ref{frTgrT}), 
\begin{equation}\label{f+g=gamma}
	\begin{aligned}
		f_{1,u,T} + g_{1,u,T} & = u\gamma^u(T,u), \\
		f_{r,u,\infty} + g_{r,u,\infty} & = u\gamma_r(u),\quad r\geq 2. 
	\end{aligned}
\end{equation}
By (\ref{eq:5.30}), when $r\geqslant 1$,
\begin{equation}\label{fruTfruinftyapproximation}
	\left|f_{r,u,T} - f_{r,u,\infty}\right|\leqslant\frac{Cu}{T^\delta }\mathrm{e}^{-cu}, \quad \left|g_{r,u,T} - g_{r,u,\infty}\right|\leqslant\frac{Cu}{T^\delta}. 
\end{equation}
Note that when $r=1$, the corresponding result is given by Theorem \ref{thmintermediate1}(1).

By (\ref{G1uTdecomposition}), (\ref{frTgrT})-(\ref{fruTfruinftyapproximation}) and the dominated
convergence theorem,
\begin{multline}\label{eq:5.33}
\lim_{T\to+\infty} \int_1^{+\infty} \gamma^u(T,u)d  u
=\lim_{T\to+\infty} \int_1^{+\infty} \left\{ \gamma^u(T,u) - \frac{g_{1,u,T}}{u} \right\} d  u 
+\lim_{T\to+\infty} \int_1^{+\infty} g_{1,u,T} \frac{d  u}{u}
\\
= \int_1^{+\infty} \left\{ \gamma_1(u) - \frac{g_{1,u,\infty}}{u} \right\} d  u+\lim_{T\to+\infty} \int_1^{+\infty} g_{1,u,T} \frac{d  u}{u}
\\
=\int_1^{+\infty} \left\{ \gamma_1(u) - \frac{g_{1,u,\infty}}{u} \right\} d  u+\lim_{T\to+\infty}\sum_{r=2}^{r_0}   \int_1^{+\infty} f_{r,T^{1-r}u,T} \frac{d  u}{u} + \lim_{T\to+\infty} \int_1^{+\infty} g_{r_0,T^{1-r_0}u,T} \frac{d  u}{u}
\\
=\int_1^{+\infty} \gamma_1(u)  d  u+
\lim_{T\to+\infty}\sum_{r=2}^{r_0}  \int_{T^{1-r}}^{+\infty} f_{r,u,T} \frac{d  u}{u} + \lim_{T\to+\infty} \int_{T^{1-r_0}}^{+\infty} g_{r_0,u,T} \frac{d  u}{u} - \int_1^{+\infty} g_{1,u,\infty} \frac{d  u}{u}
\\
=\int_1^{+\infty} \gamma_1(u)  d  u+\lim_{T\to+\infty}Q_{1,T}+\lim_{T\to+\infty}Q_{2,T}
- \int_1^{+\infty} g_{1,u,\infty} \frac{d  u}{u},
\end{multline}
where
\begin{equation}\label{Q1TQ2T}
	\begin{aligned}
		Q_{1,T} := \sum_{r=2}^{r_0} & \int_1^{+\infty} f_{r,u,T} \frac{d  u}{u} + \int_1^{+\infty} g_{r_0,u,T} \frac{d  u}{u}, \\
		Q_{2,T} := \sum_{r=2}^{r_0} & \int^1_{T^{1-r}} f_{r,u,T} \frac{d  u}{u} + \int^1_{T^{1-r_0}} g_{r_0,u,T} \frac{d  u}{u}. 
	\end{aligned}
\end{equation}
By (\ref{fruTfruinftyapproximation}) and the dominated convergence theorem, when $T\to+\infty$, 
\begin{equation}\label{eq:5.35}
	Q_{1,T} \to Q_{1,\infty} = \sum_{r=2}^{r_0} \int_1^{+\infty} f_{r,u,\infty} \frac{d  u}{u} + \int_1^{+\infty} g_{r_0,u,\infty} \frac{d  u}{u}. 
\end{equation}

Then Theorem \ref{thmintermediate2} follows directly from 
(\ref{eq:4.11}), (\ref{f+g=gamma}), (\ref{eq:5.33})-(\ref{eq:5.35}) and the
following lemma.

\begin{lemma}\label{lemma:5.7}
When $T\to +\infty$,
\begin{align}\label{eq:5.36}
\lim_{T\to+\infty}Q_{2,T}
= \sum_{r=2}^{r_0} \int_0^1 \gamma_r(u)  d  u + \sum_{r=1}^{r_0-1}\int_1^{+\infty}  g_{r,u,\infty}  \frac{d  u}{u}.
\end{align}
\end{lemma}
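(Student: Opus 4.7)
The plan is to prove Lemma \ref{lemma:5.7} in three successive reductions, with the main obstacle being a ``spectral telescoping'' across the filtration $\mathscr{E}_0\supset\mathscr{E}_1\supset\cdots\supset\mathscr{E}_\infty$. First, I apply Lemma \ref{FruTFruinftyapproximation}(2) to replace $f_{r,u,T}$ by $f_{r,u,\infty}$ (for $r=2,\ldots,r_0$) and $g_{r_0,u,T}$ by $g_{r_0,u,\infty}$ in the integrands of $Q_{2,T}$. Since $u\in[T^{1-r},1]\subset\{|u|\le 1\}$, the lemma gives $|f_{r,u,T}-f_{r,u,\infty}|\le Cu\,T^{-\delta}$ and analogously for the $g_{r_0}$ difference, so after dividing by $u$ and integrating over an interval of length at most $1$ the error is $O(T^{-\delta})\to 0$. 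Hence
\begin{equation*}
\lim_{T\to+\infty}Q_{2,T}=\lim_{T\to+\infty}\left[\sum_{r=2}^{r_0}\int_{T^{1-r}}^{1}f_{r,u,\infty}\frac{du}{u}+\int_{T^{1-r_0}}^{1}g_{r_0,u,\infty}\frac{du}{u}\right].
\end{equation*}

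Second, using (\ref{f+g=gamma}) I substitute $f_{r,u,\infty}/u=\gamma_r(u)-g_{r,u,\infty}/u$ for $r\ge 2$. The $r=r_0$ contribution of the resulting $g$-sum cancels the separate $g_{r_0,u,\infty}$-term, leaving
\begin{equation*}
\lim_{T\to+\infty}Q_{2,T}=\sum_{r=2}^{r_0}\lim_{T\to+\infty}\int_{T^{1-r}}^{1}\gamma_r(u)\,du-\sum_{r=2}^{r_0-1}\lim_{T\to+\infty}\int_{T^{1-r}}^{1}g_{r,u,\infty}\frac{du}{u}.
\end{equation*}
The short-time integrability of $\gamma_r$ near $u=0$ (the standard well-definedness of the finite-dimensional eta form, i.e.\ the equivariant version of \cite[Theorem 9.7]{berline1992heat} underlying Definition \ref{defnfiniteetafrom}) yields $\int_{T^{1-r}}^{1}\gamma_r(u)\,du\to\int_0^1\gamma_r(u)\,du$, which produces the first sum in the target formula.

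What remains---and is the main obstacle---is the identity
\begin{equation*}
-\sum_{r=2}^{r_0-1}\lim_{T\to+\infty}\int_{T^{1-r}}^{1}g_{r,u,\infty}\frac{du}{u}=\sum_{r=1}^{r_0-1}\int_{1}^{+\infty}g_{r,u,\infty}\frac{du}{u}.
\end{equation*}
The index shift from $r=2,\ldots,r_0-1$ on the left to $r=1,\ldots,r_0-1$ on the right, together with the range change from $[T^{1-r},1]$ to $[1,+\infty)$, shows that a nontrivial redistribution between scales is at play. Following the strategy of Ma in \cite[\S 2.e]{ma2000formes} and \cite[(2.98), (2.105)]{ma2002functoriality}, I would establish at each level $r\ge 1$ the analogue of (\ref{G1uTdecomposition}),
\begin{equation*}
G_{r,u,T}=\sum_{r'=r+1}^{r_0}F_{r',T^{r-r'}u,T}+G_{r_0,T^{r-r_0}u,T},\quad T\gg 1,
\end{equation*}
via the spectral splitting (\ref{spectradecomposition}) and contour deformation; then take $du$-components, multiply by $u$, perform the change of variables $v=T^{r-r'}u$ in each piece, apply Lemma \ref{FruTFruinftyapproximation}(1) on the tails $[1,+\infty)$ and Lemma \ref{FruTFruinftyapproximation}(2) on compact $v$-ranges to pass to $T=\infty$, and invoke (\ref{f+g=gamma}) once more so that intermediate contributions cancel pairwise.

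The hard part is thus threefold: (i) verifying the level-$r$ generalization of (\ref{G1uTdecomposition}); (ii) obtaining uniform-in-$T$ estimates on $F_{r',u,T}$ and $G_{r_0,u,T}$ at \emph{both} small and large $u$-scales, adequate to justify dominated convergence after the change of variable; and (iii) the combinatorial bookkeeping that exhibits the telescoping as exact. Items (i) and (ii) should follow from the resolvent comparison in Theorem \ref{thmATZtoAr}, which plays the role of the spectral gap analysis used by Ma in the Dolbeault setting; item (iii) is essentially an algebraic identity on sums of integrals once the approximation estimates are in hand.
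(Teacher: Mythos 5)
Your steps (1)--(3) are sound and reproduce, by a slightly different ordering, the paper's reduction: Lemma \ref{FruTFruinftyapproximation}(2) indeed lets you replace $f_{r,u,T}$ by $f_{r,u,\infty}$ and $g_{r_0,u,T}$ by $g_{r_0,u,\infty}$ in $Q_{2,T}$ at a cost of $\mathrm{O}(T^{-\delta})$, and the substitution $f_{r,u,\infty}/u = \gamma_r(u) - g_{r,u,\infty}/u$ together with integrability of $\gamma_r$ near $u=0$ is correct. The problem comes in your ``remaining identity.'' As written, the limit sits inside the sum,
\[
-\sum_{r=2}^{r_0-1}\lim_{T\to+\infty}\int_{T^{1-r}}^{1}g_{r,u,\infty}\frac{du}{u},
\]
and these limits do not exist term by term. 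By (\ref{gruinftygnuT}) one has $g_{r,u,\infty}=\sum_{i=-N}^{-1}b_{r,i,\infty}u^i$, so
\[
\int_{T^{1-r}}^{1}g_{r,u,\infty}\frac{du}{u}=\sum_{i=-N}^{-1}\frac{b_{r,i,\infty}}{i}\bigl(1-T^{(1-r)i}\bigr),
\]
and for $r\geqslant 2$, $i<0$ the exponent $(1-r)i=(r-1)|i|\geqslant 1$ is strictly positive, so each term blows up as $T\to+\infty$. Only after recombining all $r$ does the sum converge, and that cancellation is precisely the content the identity must encode; it cannot be extracted by distributing the limit.

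This is where the paper's argument diverges from yours. Rather than passing to $T=\infty$ early and then trying to repair the divergence by a level-$r$ telescoping (your proposed $G_{r,u,T}=\sum_{r'>r}F_{r',T^{r-r'}u,T}+G_{r_0,T^{r-r_0}u,T}$, of which only the case $r=1$ is actually used in the paper), the paper keeps the $T$-dependent expansion coefficients $a_{r,i,T},\,b_{r,i,T}$, subtracts the divergent part of the asymptotic expansion of $f_{r,u,T}$ inside the integral, and uses the algebraic identity obtained from matching Laurent coefficients in (\ref{G1uTdecomposition}),
\[
b_{r_0,i,T}\,T^{(1-r_0)i}+\sum_{r=2}^{r_0}a_{r,i,T}\,T^{(1-r)i}=-a_{1,i,T}\qquad(i<0),
\]
so that all the would-be divergent powers of $T$ collapse into the bounded term $-a_{1,i,T}$ \emph{before} taking the limit. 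It also needs $b_{r_0,i,T}=0$ for $i\geqslant 0$ (to handle the separate $g_{r_0}$ piece) and the resulting control of the $\log T$ term $\sum_r (r-1)a_{r,0,T}\log T$. Your sketch does not produce this finite-$T$ algebraic identity; the proposed ``telescoping so that intermediate contributions cancel pairwise'' is exactly the content that is missing, and without it the plan cannot be completed. So the gap is concrete: you need the coefficient-level cancellation identity coming from the Laurent expansion of (\ref{G1uTdecomposition}) at finite $T$, not a term-by-term limit of $T=\infty$ quantities.
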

\begin{proof}
	When $u\to 0$, by (\ref{FrTGrT}), there exists $N\in \mathbb{N}$, such that for $T\in [1,+\infty]$, $r\geq 1$, $F_{r,u,T}, G_{r,u,T}$ have asymptotic expansions
\begin{equation}
F_{r,u,T} = \sum_{i=-N-1}^{-1} A_{r,i,T} u^i + \mathrm{O}(1),  \quad
G_{r,u,T} = \sum_{i=-N-1}^{-1} B_{r,i,T} u^i + \mathrm{O}(1). 
\end{equation}
For $T\in [1,+\infty]$, $r\geq 1$, set
\begin{equation}
a_{r,i,T} = \{A_{r,i-1,T}\}^{d  u},\quad b_{r,i,T} = \{B_{r,i-1,T}\}^{d  u}. 
\end{equation}
Then by (\ref{frTgrT}),
\begin{align}
f_{r,u,T} = \sum_{i=-N}^0 a_{r,i,T}u^i+\mathrm{O}(u),\quad 
g_{r,u,T} =\sum_{i=-N}^0 b_{r,i,T}u^i+\mathrm{O}(u).
\end{align}
From (\ref{eq:5.40}), when $T\to +\infty$, the functions $\{f_{r,u,T},g_{r,u,T}\}$ are uniformly bounded holomorphic functions on $\{u\in\mathbb{C}: |u|\leqslant 1\}$. Hence they have uniform expansions in the domain of $u$. By (\ref{eq:5.40}) and Cauchy formula, the coefficients of expansions of $f$ and $g$ are convergent in the sense of $\mathrm{O}(\frac{1}{T^\delta})$ when $T\to+\infty$. So  $a_{r,i,T}, b_{r,i,T}\in \Omega^{\bullet}(S)$
and depend smoothly on $T\in [1,+\infty]$. Moreover there exists
$\delta>0$, for $T\to 
+\infty$,
\begin{align}
a_{r,i,T}  = a_{r,i,\infty} + \mathrm{O}\left(\frac{1}{T^\delta}\right),\quad
b_{r,i,T}  = b_{r,i,\infty} + \mathrm{O}\left(\frac{1}{T^\delta}\right).
\end{align} 
\end{proof}
Note that by (\ref{FrTGrT}) and (\ref{frTgrT}),
		\begin{equation}
f_{r,u,\infty} + g_{r,u,\infty} =\left\{ \psi_S \widetilde{\mathrm{Tr}} \left[g\exp(-\mathcal{B}_{r, u^2})\right]\right\}^{d u}.
\end{equation}
So for $i\leq 0$, 
\begin{align}
	a_{r,i,\infty}+b_{r,i,\infty}=0.
\end{align}
		From the equivariant version of \cite[Theorem 9.7]{berline1992heat}, 
		\begin{equation}\label{fgrutuinfty}
			\lim_{u\to+\infty} (f_{r,u,T}+g_{r,u,T}) = 0.
		\end{equation}
		By the definition of $f_{r,u,T}$, $\lim_{u\to+\infty}f_{r,u,T} = 0$. Combined with (\ref{fgrutuinfty}), $\lim_{u\to+\infty}g_{r,u,T} = 0$, we have
		\begin{equation}\label{gruinftygnuT}
			g_{r,u,\infty}  = \sum_{-N}^{-1} b_{r,i,\infty}u^i.
		\end{equation}

By the definition of $c_1,c_2$ (\ref{c1c2definition}) and relation (\ref{spectradecomposition}), in the region $\mathcal{U}_1$, $0$ is the only eigenvalue of $\mathcal{B}_{r_0, u,T}$. By (\ref{FrTGrT}),
\begin{align}
G_{r_0,u,T} := u^{-2}\psi_S\delta_{u^2}\widetilde{\mathrm{Tr}}\left[ \int_{\delta_0} \mathrm{e}^{-\lambda}(\lambda-u^2\mathcal{B}_{r_0,1,T})^{-1}d \lambda \right].
\end{align}
By the same argument in the proof of \cite[Theorem 9.29]{bismut1997holomorphic}, we obtain that
$b_{r_0, i, T}=0$ for $i\geq 0$. That is,
\begin{align}
g_{r_0,u,T}=\sum_{i=-N}^{-1}b_{r_0, i, T}u^i.
\end{align}
By (\ref{G1uTdecomposition}),
\begin{align}
b_{r_0,i,T} T^{(1-r_0)i} + \sum_{r=2}^{r_0} a_{r,i,T} T^{(1-r)i} = -a_{1,i,T} \quad i<0. 
\end{align}
So we may write $Q_{2,T}$ as:
\begin{equation}\label{Q2Tcomputation1}
\begin{aligned}
Q_{2,T} = & \sum_{r=2}^{r_0} \int_{T^{1-r}}^1 \left\{ f_{r,u,T} - \sum_{i=-N}^0 a_{r,i,T}u^i \right\} \frac{d  u}{u} + \sum_{i=-N}^{-1} \frac{1}{i} \sum_{r=2}^{r_0} \left( a_{r,i,T} - a_{r,i,T}T^{(1-r)i} \right) \\
& \quad + \sum_{r=2}^{r_0} \int_{T^{1-r}}^1 a_{r,0,T} \frac{d  u}{u} + \sum_{i=-N}^{-1} \frac{1}{i} \left( b_{r_0,i,T} - b_{r_0,i,T}T^{(1-r)i} \right) \\
= & \sum_{r=2}^{r_0} \int_{T^{1-r}}^1 \left\{ f_{r,u,T} - \sum_{i=-N}^0 a_{r,i,T}u^i \right\} \frac{d  u}{u} + \sum_{i=-N}^{-1} \frac{1}{i} \left( b_{r_0,i,T} + \sum_{r=1}^{r_0}  a_{r,i,T} \right) \\
& \quad + \sum_{r=2}^{r_0} (r-1)a_{r,0,T}\log T. 
\end{aligned}
\end{equation}
So when $T\to +\infty$,
\begin{equation}
\begin{aligned}
Q_{2,T} \to Q_{2,\infty} & = \sum_{r=2}^{r_0} \int_0^1 \left\{ f_{r,u,\infty} + \sum_{i=-N}^0 b_{r,i,\infty} u^i \right\} \frac{d  u}{u} + \sum_{r=1}^{r_0-1}\sum_{i=-N}^{-1}\frac{1}{i}a_{r,i,\infty} \\
& = \sum_{r=2}^{r_0} \int_0^1 \left\{ f_{r,u,\infty} + g_{r,u,\infty} \right\} \frac{d  u}{u} - \sum_{r=1}^{r_0-1}\int_1^{+\infty} \left\{ \sum_{i=-N}^{-1} a_{r,i,\infty}u^i \right\} \frac{d  u}{u} \\
& = \sum_{r=2}^{r_0} \int_0^1 u\gamma_r(u)  \frac{d  u}{u} + \sum_{r=1}^{r_0-1}\int_1^{+\infty}  g_{r,u,\infty}  \frac{d  u}{u}.
\end{aligned}
\end{equation}

The proof of Lemma \ref{lemma:5.7} is completed.

\section*{Acknowledgements}
This paper is a condensed form of the second author's Ph.D thesis, we would like to thank Professers Xianzhe Dai and Hang Wang for helpful discussions. B.\ L.\ is partially supported by Science and Technology Commission 
of Shanghai Municipality (STCSM), grant No. 22DZ2229014, and NSFC No.11931007, No.12225105.
 
\bibliographystyle{acm}
\bibliography{Composition}

\vskip 2em

{\small SCHOOL OF SCIENCE \& BIG DATA SCIENCE, ZHEJIANG UNIVERSITY OF SCIENCE AND TECHNOLOGY}

{\small HANGZHOU, ZHEJIANG, 310023, CHINA}

{\small E-mail address: 122072@zust.edu.cn}

\vspace{6pt}

{\small DEPARTMENT OF MATHEMATICS, EAST CHINA NORMAL UNIVERSITY }

{\small SHANGHAI 200062, CHINA}

{\small E-mail address: bliu@math.ecnu.edu.cn}

\end{document}